\newtheorem{theorem}{Theorem}
\newtheorem{axiom}{Axiom}
\newtheorem{claim}[theorem]{Claim}
\newtheorem{corollary}{Corollary}
\newtheorem{definition}{Definition}
\newtheorem{lemma}{Lemma}
\newtheorem{notation}{Notation}
\newtheorem{remark}[theorem]{Remark}
\begin{document}

\parskip=.075in

\title[Lexicographic Expected Utility]{Generalized Subjective Lexicographic Expected Utility Representation}
\author{Hugo Cruz Sanchez}
\maketitle

\begin{abstract}
We provide foundations for decisions in face of unlikely events by extending the standard framework of Savage to include preferences indexed by a family of events. We derive a subjective lexicographic expected utility representation which allows for infinitely many lexicographically ordered levels of events and for event-dependent attitudes toward risk. Our model thus provides foundations for models in finance that rely on different attitudes toward risk (e.g. Skiadas \cite{Skiadas}) and for off-equilibrium reasonings in infinite dynamic games, thus extending and generalizing the analysis in Blume, Brandenburger and Dekel \cite{BBD1991}.\end{abstract}

\newpage

\section{Introduction}

One of the lessons we learn from the theory of refinements of Nash equilibrium in game theory is that decisions in face of unlikely events play an important role in determining how a game is to be played. In particular, the analysis of dynamic games relies heavily on off-equilibrium reasonings, that is, in determining what would have happened had players not played what they are supposed to play. We also learn from finance theory that the attitudes toward risk may depend on the kinds of events that the agent faces. For instance, it is conceivable that agents become more risk averse in face of catastrophic, unlikely events. Lexicographic Expected Utility (LEU) is a sensible approach to model decisions in face of very unlikely events, as it presumes a hierarchy of events, ordered by relative unlikeliness, and captures the idea that once the agent is faced with an unlikely event, he goes down to the level of the event in the hierarchy and performs a standard expected utility computation. Thus, a LEU model with infinitely many levels and level-dependent attitudes toward risk seems to be the right model to be used in the theory of infinite dynamic games and of financial theories with varying risk attitudes. It turns out, however, that there is no decision theoretic foundations for such a model available in the literature.

This paper fills up this gap. In particular, in a standard Savage-style framework, we consider a decision maker that is described not only by a preference relation over acts, but also by a family of preference relations over acts.\footnote{For now this family is taken as a primitive; later we will argue that each preference in the family can be inferred as a sort of conditional preference from the given preference relation over acts.} Each preference in this family is indexed by some event in the state space. The idea is that a preference indexed by an event, say $\succsim_A$, where $A$ is the indexing event, represents the preferences of the agent when the agent is informed that the event $A$ has occurred. We then provide a list of axioms that such a system of preferences ought to satisfy and show that decisions that are consistent with the axioms can be represented by a Generalized Subjective Lexicographic Expected Utility (GSLEU) functional. Specifically, for a given state space $S$, a sigma-algebra $\Sigma$ on subsets of $S$ and an outcome space $O$, we consider a preference relation $\succsim$ and a family $(\succsim_A)_{A\in\Sigma}$ of preference relations over the space of acts $f:S\to O$. When these preferences satisfy our axioms, it must be that there exists a (possibly uncountable) family of events $\mathcal{E}\subset\Sigma$ and, for each $E\in\mathcal{E}$, a utility function $u_E:O\to\mathbb{R}$ and a subjective probability measure $P_E$ such that an act $f$ is preferred to an act $g$ under $\succsim$ (in short, $f\succsim g$) if and only if the subjective lexicographic expected utility of $f$ is greater than that of $g$. In symbols, it must be that if \[\int u_E(g) dP_E>\int u_E(f) dP_E\] for some $E\in\mathcal{E}$, then there must exist $E'\in\mathcal{E}$ with $E\subset E'$ such that \[\int u_{E'}(f) dP_{E'}>\int u_{E'}(g) dP_{E'}.\]  Moreover, for each $E\in\mathcal{E}$, $P_E$ is uniquely determined, $u_E$ is unique of to affine transformations, and $u_E$ is ordinally equivalent to $u_{E'}$, for any other $E'\in\mathcal{E}$. This last property allows the utility indices $u_E$ and $u_{E'}$ to represent different attitudes toward risk. Observe that the interpretation of the family $\mathcal{E}$ is that of a hierarchy of events, ordered by relative unlikeliness, in that higher level events are interpreted as infinitely more likely than lower level events.

To contrast with the existing literature, the most relevant contribution is that of Blume, Brandenburger and Dekel \cite{BBD1991}, which, by relaxing the Archimedian axiom in an Anscombe and Aumann \cite{AnsAum1963} framework with finitely many states, provide foundations to subjective expected utility representation with finitely many levels and level-independent risk attitudes. That is, they establish the existence of one utility index $u$ and finitely many subjective probability measures $(P_{\ell})_{\ell=1}^L$ such that an act $f$ is preferred to an act $g$ if and only if the existence of a level $\ell$ such that \[\sum u(g(s))P_{\ell}(s)>\sum u(f(s))P_{\ell}(s)\] implies the existence of a level $\ell'<\ell$ such that \[\sum u(f(s))P_{\ell'}(s)>\sum  u(g(s))P_{\ell'}(s).\] It is apparent that the the representation derived here is better suited to the analysis of the problems in game theory and finance mentioned above, as it allows for infinitely many levels and level-depedent utility indices.

Moving on to the axioms, we begin by making precise the interpretation of $\succsim_A$ as the preference when the agent is informed that the event $A$ occurred. We then proceed to relativize the standard Savage axioms to each of the preferences in the family $(\succsim_A)_{A\in\Sigma}$. We note that, because ``preferences when informed of an event $A$'' are part of our primitives, Savage's  \textit{Sure Thing Principle} has an immediate formulation: if the agent prefers act $f$ to act $g$ when informed of an event $A$ and also when informed of the complement of the event $S\backslash A$, then $f$ should indeed be preferred to $g$. We depart from Savage to allow for lexicographic introspection. What we want to capture is a decision maker who, when informed than an extremely unlikely event has occurred, performs the minimal changes in his/her world views in order to make sense of the unlikely event, and then proceeds as a standard expected utility maximizer. In lexicographic terms, this means that we are after a completely ordered hierarchy of levels events, where the order represents that higher levels are infinitely more likely than lower levels. When an unlikely event occurs, the decision maker goes down to the first level at which the event occurs and uses the level's expected utility terms (utility index and subjective probability.) The crucial feature in our axioms that represents such a decision maker is the postulate of existence of a subfamily of events, $\mathcal{E}\subset \Sigma$, satisfying the following properties. First, it is rich enough to identify \textit{relevant events} for the entire family $\Sigma$, where ``relevance" of an event means that it matters for some indexed preference in the family. Second, it is not richer than what is necessary, in the sense that it avoids redundancies. Third, and more important, the family $\mathcal{E}$ connects the corresponding indexed preferences $(\succsim)_{E\in\mathcal{E}}$ with the non-indexed preference $\succsim$, in a lexicographic fashion.

It is important to note that the existence of the family $\mathcal{E}$, as postulated by our axioms, is far from enough for our representation result. In fact, our Theorem \ref{Theo001} shows that the other axioms already imply the existence of a hierarchy of classes of events ordered by ``relative nullity": events in a class $\alpha$ are of ``comparable likelihood", meaning that neither is null relative to the other, but an event in a class $\alpha$ is ``infinitely less likely" than an event in a higher class $\beta\gg\alpha$, meaning that it is null relative to that event. In fact, such other axioms already imply the existence of a qualitative probability for each class, which is the key ingredient for our generalized lexicographic expected utility representation.\footnote{Also, as relativizations of Savage's axioms, they also imply a SEU representation for each event $A\in\Sigma$.} The role played by the postulated family $\mathcal{E}$ is that it provides the necessary ``top events" for each class $\alpha$. That is, we show that, for any class $\alpha$, there exists an event $E\in\mathcal{E}$ that can be interpreted as the ``local state space" for the class $\alpha$: the expected utility representation for class $\alpha$ is determined by $E$, in that every event $A$ in the class $\alpha$ shares the same utility index $u_E$ and the subjective probability $P_A$ can be computed as the conditional of $P_E$ given $A$.

With the axiom system in place, we are able to establish our GSLEU representation result in Theorem \ref{Theo004}. That is, if a decision maker is represented by a preference $\succsim$ and also by a family of indexed preferences $(\succsim_A)_{A\in\Sigma}$, and this system of preferences satisfies our axioms, then choices can be represented by those that maximize the GSLEU functional. An important issue at this juncture, however, is whether the assumed ``informed" preferences $(\succsim_A)_{A\in\Sigma}$ can be inferred from a given ``uninformed" preference $\succsim$ over acts. 

We provide a positive answer to this question, by means of a notion of conditioning the preference $\succsim$ on an event $A$ that captures what the so far primitive notion $\succsim_A$ is meant to capture. Our notion of conditioning is stronger than Savage's notion because it also requires checking for whether ``perturbations" of an act $f$ are (conditionally) preferred to another act $g$. More precisely, Savage's notion of conditioning says that an act $f$ is preferred to an act $g$ conditional on an event $A$ if the act $fAh$ is preferred to the act $gAh$, for any other act $h$, where the notation ``$fAh$" means the act that is equal to $f$ on $A$ and equal to $h$ on the complement of $A$. On top of that, we add that the act $\tilde{f}Ah$ must be preferred to the act $gAh$ and that the act $fAh$ must be preferred to the act $\tilde{g}Ah$, where $\tilde{f}Ah$ represents a perturbation of the act $fAh$ that is equal to $f$ on most of $A$ but equal to some other constant act on a small part of $A$, and is equal to $h$ on the complement of $A$. We show in Theorem \ref{Theo003} that this stronger notion of conditioning characterizes the informed preferences $\succsim_A$, for each $A\in\Sigma$.

In words, we can in principle tease out of $\succsim$ the ``strong" conditional preferences $\succsim_A$, $A\in\Sigma$, by offering choices of acts and perturbed acts. In decision theoretic jargon, this means that the primitives $\succsim_A$, $A\in\Sigma$, are observable. But Theorem \ref{Theo003} is interesting from other perspectives as well. For instance, a classical question in probability is the issue of probabilities conditional on zero probability events. Our strong conditioning provide insights on computing conditionals on infinitely unlikely events by also requiring that the computation be robust to small perturbations. Relatedly, refinements of equilibrium in game theory often require consideration of perturbations of strategies/payoffs. Our notion of conditioning seems to capture exactly the need for such perturbations. These are questions that we plan to address in future research.

\subsection{Related Literature} 

As advanced above, Blume, Brandenburger and Dekel \cite{BBD1991} provide foundations for a SLEU representation that is special in that it does not allow for infinitely many levels or for level-dependent risk attitudes. These issues are consequences of the Anscombe-Aumann framework adopted, since it assumes a finite state space and only considers acts that are mixture-space valued. The extension to an infinite state space is not a simple matter. LaValle and Fishburn \cite{LaValleFishburn} provide an extension of a finite SLEU representation in a finite state space to such a representation in an infinite state space, while still only allowing for finitely many lexicographic levels. The extension is technical in nature, and does not really extend the SLEU model to an infinite version, as it is done here. Our contribution can, in fact, be viewed as a first step in providing a complete infinite extension of the analysis in Blume, Brandenburger and Dekel \cite{BBD1991}: they do not assume, as we do here, the existence of a primitive lexicographic order; the lexicographic representation is a consequence of their relaxation of the Archimedian axiom. Because in our infinite framework we do not even have a well-ordered space of classes of events to begin with, it is not clear how a non-Archimedian approach would work. This is another question for future research.  A related paper is that of Amarante \cite{Ama2013}, which does allow for infinitely many ``informed" (or conditional) preferences by also considering a family of such preferences as a primitive of the model, but does not provide a lexicographic representation, as the focus of the paper is to show that a family of SEU conditional preferences does not necessarily give rise to an SEU unconditional preference. \medskip

We move now to Section \ref{setting and axioms} where the basic framework and axioms are presented. The results are in Section \ref{results} and the conclusion in Section \ref{conclusion}. The Appendix contains the results not proved in the main text.

\section{Setting and Axioms}\label{setting and axioms}

Following Savage \cite{Sav1954} and Machina and Schmeidler \cite{MachSchm1992} (including the naming of axioms), our setting is as follows:  $S$ is the infinite set of states, which capture the uncertainty present in the decision-making process, $\Sigma \subseteq 2^{S}$ is the $\sigma -$%
algebra of events, $O$ is the set of outcomes, $F$ is the set of $\Sigma -$%
measurable functions from $S$ to $O$ with finite range. This is the set of acts
that represents the possible choices in the decision-making process.  A weak order (complete and transitive binary relation) represents the ranking of the decision-maker over $F$. In addition to this classical setting, $\succsim_{A}$ is a weak preference on $F$ for each event $A$ in $\Sigma $.

\def\sm{\backslash}

\begin{notation}
For an event $A$, and a pair of acts $f$ and $g$, $fAg$ represents the composed act that is equal to $f$ on $A$ and $g$ on $S\sm A$.
\end{notation}

\begin{notation}
\label{StrictandIndifference}For a weak order $\succsim $ on a non empty set 
$X$, and a pair of elements of $X$, $f$ and $g$, 
\begin{equation*}
f\succ g\Leftrightarrow \left( f\succsim g\right) \text{ and }\left( \text{%
not }\left( g\succsim f\right) \right)
\end{equation*}%
and%
\begin{equation*}
f\sim g\Leftrightarrow \left( f\succsim g\right) \text{ and }\left(
g\succsim f\right) .
\end{equation*}
\end{notation}

\begin{definition}[\textbf{agree}]
$\succsim _{A}$ agree with $\succsim _{B}$ iff, for each pair of acts $f$
and $g$,%
\begin{equation*}
f\succsim _{A}g\Leftrightarrow f\succsim _{B}g.
\end{equation*}
\end{definition}

Our first axiom on the primitives makes precise the idea that $\succsim_A$ is indeed a ``preference when informed of event $A$". Such definition meets
Savage's idea of ordering if $A$ were known to obtain (page 22, Savage \cite%
{Sav1954}, \textquotedblleft What technical interpretation can be attached
to the idea that $f$ would be preferred to $g$, if $B$ were known to obtain?
Under any reasonable interpretation, the matter would seem not to depend on
the values $f$ and $g$ assume at states outside of $B$\textquotedblright )
In particular, the preference indexed by the empty event is degenerate
(i.e. each act is weakly preferred to any act). However, axioms $P3\frac{1}{2}$ and 
$P5\frac{1}{2}$ below ensure that no weak preference indexed by a non empty event
is degenerate.

\begin{axiom}[$P1\frac{1}{2}$]
For each pair of acts $f$ and $g$, 
\begin{equation*}
f\succsim _{A}g\Rightarrow fAh\succsim _{A}gAh\text{ for each act }h
\end{equation*}%
and%
\begin{equation*}
fAh\succsim _{A}gAh\text{ for some act }h\Rightarrow f\succsim _{A}g\text{.}
\end{equation*}
\end{axiom}

In Savage \cite{Sav1954}, a \emph{null event} is an event that is irrelevant for the
decision-making process. However, in our framework, the decision-making
process has multiple levels, and an event can be irrelevant at some levels,
but relevant for others. Thus, our concept of null event is relative to each
indexed preference.

\begin{definition}[null event]
For each pair of events $A$ and $B$ such that $B\subseteq A$, $B$ is a null
event at $A$ if and only if  $\succsim _{A\sm B}$ agrees with $\succsim _{A}$.
\end{definition}

Note that the concept of null event at $A$ refers only to events (i.e.
belonging to $\Sigma $) contained in $A$. The empty event is null
at every $A$, even when $A$ is the empty event. 
 
The next axiom is the sure thing principle, and it relates different indexed
preferences. It says that the act $f$ is weakly preferred to the act $g$ for
the preference indexed by an event $A$ if $f$ is weakly preferred to $g$ for
the preferences indexed by each event of a bipartition of $A$, and the order
between $f$ and $g$ is strict for the preference indexed by $A$ if it is
strict for the preference indexed by an event of the bipartition which is
non null at $A$. This is consistent with Savage's first formulation of the sure thing
principle (page 21, Savage \cite{Sav1954}), \textquotedblleft Having suggested what
I shall tentatively call the sure-thing principle, let me give it relatively
formal formal statement thus: If the person would not prefer $f$ to $g$,
either knowing that the event $B$ obtained, or knowing that the event $S\sm B$
obtained, then he does not prefer $f$ to $g$. Moreover (provided he does not
regard $B$ as virtually impossible) if he would definitely prefer $g$ to $f$, knowing that $B$ obtained, and, if he would not prefer $f$ to $g$, knowing that $B$ did not obtain, then he definitely prefers $g$ to $f$.\textquotedblright )

\begin{axiom}[$P2\frac{1}{2}$]
For each pair of events $A$ and $B$ such that $B\subseteq A$, and each pair
of acts $f$ and $g$, 
\begin{equation*}
f\succsim _{B}g\text{ and }f\succsim_{A\sm B}g\Leftrightarrow f\succsim _{A}g
\end{equation*}%
and%
\begin{equation*}
B\text{ is non null at }A\Rightarrow \left( f\succ _{B}g\text{ and }%
f\succsim _{A\sm B}g\Rightarrow f\succ _{A}g\right) .
\end{equation*}
\end{axiom}

The next axiom says that constant acts are equally ordered by the
preferences indexed by non empty events, and it is called eventwise
monotonicity.

\begin{axiom}[$P3\frac{1}{2}$]
For each non empty event $A$, and constant acts $f$ and $g$,%
\begin{equation*}
f\succsim _{A}g\Leftrightarrow f\succsim _{S}g.
\end{equation*}
\end{axiom}

The next axiom says that an event is more likely than a second event, if a
prize resulting from the first event is preferred to the same prize
resulting from the second event, independently of the prize. It is called
weak comparative probability - prize independence.

\begin{axiom}[$P4\frac{1}{2}$]
For each triple of events $A,B$ and $C$ such that $B,C\subseteq A$, and
constant acts $f,f^{\prime },g$ and $g^{\prime }$ such that $f\succsim
_{S}f^{\prime }$ and $g\succsim _{S}g^{\prime }$,%
\begin{equation*}
fBf^{\prime }\succsim _{A}fCf^{\prime }\Rightarrow gBg^{\prime }\succsim
_{A}gCg^{\prime }.
\end{equation*}
\end{axiom}

The next axiom implies nondegeneracy for each preference indexed by a non
empty event, and it is called nondegeneracy.

\begin{axiom}[$P5\frac{1}{2}$]
There is at least two constant acts $f$ and $g$ such that $f\succ _{S}g$.
\end{axiom}

The axioms above are readily seem as translations of the usual Savage axioms to our setting with an additional family of informed preferences. The next two axioms are particular of our framework. The first restricts the allowed families of indexed preferences. It says that any allowed family of indexed preferences must contain an essential subfamily that is sufficient to determine if an event is relevant for the family (i.e. an event whose subtraction matters for some indexed preference
of the family). It is called \emph{separability axiom} (SE) because the subfamily separates events (by symmetric difference on $\Sigma $), which means that each event of $\Sigma $ is used in the comparison between a pair of acts, if preferences indexed by non empty events are non degenerate (which is true by $P5\frac{1}{2}$.)

\begin{axiom}[$SE$]
There exists a subfamily $\succsim _{E}$, with $E\in \mathcal{E\subseteq }\Sigma\backslash\{
\varnothing\} $, of the family of indexed preferences satisfies the
following: 

\begin{equation*}
\forall B\in \Sigma \left( 
\begin{array}{c}
\forall E\in \mathcal{E}\left( B\subseteq E\Rightarrow \succsim _{E}\text{
agrees with }\succsim _{E\sm B}\right) \\ 
\Rightarrow \\ 
\forall A\in \Sigma \left( B\subseteq A\Rightarrow \succsim _{A}\text{ agrees
with }\succsim _{A\sm B}\right)%
\end{array}%
\right) ,
\end{equation*}%
and,%
\begin{equation*}
\forall A\in \Sigma ,\forall E\in \mathcal{E}\left( E\subseteq A\Rightarrow 
\begin{array}{c}
\succsim _{A}\text{ agrees with }\succsim _{A\sm E} \\ 
\text{or} \\ 
\succsim _{A}\text{ agrees with }\succsim _{E}%
\end{array}%
\right) .
\end{equation*}
\end{axiom}

In order to better understand what is implied by Axiom $SE$, we argue that it is a necessary condition for any subjective lexicographic utility representation with at least one event of probability one in each
class of the hierarchy of classes. In fact, if we have a subjective lexicographic utility representation with
at least one event of probability one in each class of the hierarchy of classes, then for any class, we have a preference induced by the subjective expected utility representation of the class. Also, for each event of the class (so, a non null event), we have a preference indexed by the event corresponding to the preference induced by the subjective expected utility representation conditioned on the event. The preference indexed by the event of probability one of the class (by assumption) agrees with the preference induced by the subjective expected utility representation of the class, and this is true for any other event of probability one of the class. Any event of the class containing an event of probability one of the class is an event of probability one of the class, in particular, the union of an event of the
class with an event of probability one of the class. Thus, the class contains a cofinal subset of events of probability one, which is a singleton only when the class contains an event containing every event of the class. A preference indexed by an event $A$ containing an event $E$ of this subset does not agree with the preference indexed by $E$ if and only if $A$ is of a higher class of the hierarchy (in this case, the preference indexed by $A\sm E$ agree with the preference indexed by $A$, because $E$ is irrelevant for higher classes). And finally, an event $B$ irrelevant for an event $ E\supseteq B$ of the subset is irrelevant for the class, so for any event $A\supseteq B$ of the class.

On the other hand, when combined with the other axioms above, SE is a sufficient condition for a subjective lexicographic utility representation with at least one event of probability one in each class of the hierarchy of classes.

The next axiom says that the ordering between acts for the preference $\succsim $ coincides with the ordering between acts resulting from using the lexicographic rule of comparison on preferences indexed by chains of events in $\mathcal{E}$ ordered by the set inclusion $\supseteq $.

\begin{axiom}[$P0\frac{1}{2}$]
For each pair of acts $f$ and $g$,%
\begin{equation*}
f\succsim g\text{ }
\end{equation*}%
\begin{equation*}
\text{iff}
\end{equation*}%
\begin{equation*}
g\succ _{E}f\text{ for some }E\in \mathcal{E}\Rightarrow f\succ _{E^{\prime
}}g\text{ for some }E^{\prime }\in \mathcal{E}\text{ st }E^{\prime
}\supseteq E.
\end{equation*}
\end{axiom}

The lexicographic rule of comparison on a (maybe uncountable) infinite and
order-dense hierarchy implies a transitive strict binary relation, but the
indifference binary relation might violate transitivity. 
Axiom $P0\frac{1}{2}$ allows us to by-pass this intransitivity issue, by working as a criterion on the set of families of indexed preferences. Similarly as above, note that $P0\frac{1}{2}$ is a necessary condition for any subjective lexicographic utility representation of a weak order in general.

Our final axiom is the usual technical requirement to obtain quantitative probabilities out of qualitative ones. It is called small-event continuity.

\begin{axiom}[$P6\frac{1}{2}$]
For each non empty event $A$, and each triple of acts $f,g$ and $h$ such
that $f\succ _{A}g$ and $h$ constant, there exists a finite $\Sigma -$%
measurable partition of $A$, $A_{i}$, such that, for each $i$, 
\begin{equation*}
f\succ _{A}hA_{i}g
\end{equation*}%
and 
\begin{equation*}
hA_{i}f\succ _{A}g.
\end{equation*}
\end{axiom}

\section{Results}\label{results}

We begin with a result that follows from $P1\frac{1}{2}$ to $P5\frac{1}{2}$,
asserting the existence of a hierarchy of disjoint families of events, and
describing its main properties. The proof is a series of results in the appendix A.

\begin{theorem}
\label{Theo001}There is a partition of $\Sigma $ in classes of events
ordered by an irreflexive, transitive and total order $\gg $. The lowest
class is the class containing the empty event (a singleton called trivial
class), and the highest class is the class containing $S$. An event of a non
trivial class $\alpha $ is non null relatively to other event of the class
(i.e. each of the two events is non null at the union of both events), but
it is null relatively to an event of a higher class $\beta $ (i.e. $\beta
\gg \alpha $). Subevents (superevents) of an event of a non trivial class $%
\alpha $ belong to a class no higher (lower) than $\alpha $. Each class $%
\alpha $ is closed for the union of an event of $\alpha $ and an event of a
class no higher than $\alpha $.
\end{theorem}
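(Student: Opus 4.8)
The plan is to build the order $\gg$ directly from the notion of relative nullity and then verify each asserted clause, using only $P1\frac12$--$P5\frac12$. First I would record the algebraic structure that the Sure Thing Principle $P2\frac12$ imposes: for any bipartition $A = B \cup (A\setminus B)$ one has $f \succsim_A g$ iff $f\succsim_B g$ and $f \succsim_{A\setminus B} g$, so that $\succsim_A$ is literally the conjunction of the sub-preferences over any finite measurable partition of $A$. From this I would derive the working characterization that, for $N\subseteq M$, \emph{$N$ is null at $M$} is equivalent to $\succ_N \subseteq \succ_{M\setminus N}$, together with the two stability lemmas I will lean on throughout: sub-events of an event null at $M$ are again null at $M$, and finite unions of events null at $M$ are null at $M$. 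I would also record the non-degeneracy fact that, by $P3\frac12$ and $P5\frac12$, every non-empty event carries two strictly ranked constant acts, so $\succsim_M$ is non-degenerate, and in particular $M$ is never null at itself.

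For non-empty events $A,B$ I then define $A$ to be \emph{null relative to} $B$ when $A$ is null at $A\cup B$, and I declare $A,B$ \emph{comparable} when neither is null relative to the other. The first payoff is a trichotomy: the additivity lemma shows that $A$ and $B$ cannot both be null at the non-empty union $A\cup B$ (else $A\cup B$ would be null at itself, contradicting non-degeneracy), so exactly one of ``$A$ null relative to $B$,'' ``$B$ null relative to $A$,'' ``comparable'' holds. I set $\beta \gg \alpha$ when the events of $\beta$ are strictly more likely, i.e.\ events of $\alpha$ are null relative to events of $\beta$ but not conversely. Reflexivity and symmetry of comparability are immediate (reflexivity is exactly $M$ not null at itself), and the two extreme classes fall out at once: $\emptyset$ is null at every $A=A\cup\emptyset$ while no non-empty $B$ is null at itself, so $\{\emptyset\}$ is a singleton lowest class; dually $S$ is null at no union $S\cup A = S$, so $S$ sits in the highest class.

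The crux is transitivity --- both that comparability is transitive (so the classes genuinely partition $\Sigma$) and that $\gg$ is well defined on classes and transitive, whence total by the trichotomy. This is the one place where the comparative-probability axiom $P4\frac12$ is indispensable: nullity alone records only agreement of preferences, and to chain ``$A$ infinitely less likely than $B$'' with ``$B$ infinitely less likely than $C$'' into a statement about $A$ versus $C$ I expect to need a prize-independent likelihood comparison. Concretely I would extract, in the manner of Savage's qualitative-probability step and using $P1\frac12$--$P4\frac12$, a coherent comparative-likelihood relation, show that relative nullity is its infinitesimal part, and transport transitivity of the likelihood order onto $\gg$. I anticipate this extraction, and the verification that relative nullity composes, to be the main technical obstacle and the reason the proof is spread over a chain of lemmas in Appendix A.

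Finally, the monotonicity and closure clauses follow cheaply from the trichotomy once the order is in place. For a sub-event $A'\subseteq A$ one has $A\cup A' = A$, so $A$ is not null at $A\cup A'$, hence $A$ is not null relative to $A'$; the trichotomy then forces the class of $A'$ to be no higher than that of $A$, and the superevent case is symmetric. For closure, given $A$ in class $\alpha$ and $C$ of class no higher than $\alpha$, monotonicity gives class$(A\cup C)\ge\alpha$; and if $A$ were null at $A\cup C$ then, writing $C' = C\setminus A$ and applying the trichotomy to $A$ versus $C'$, I would obtain class$(A) <$ class$(C') \le$ class$(C)\le\alpha$, contradicting $A\in\alpha$. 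Hence $A\cup C$ lies in $\alpha$, completing the list of properties.
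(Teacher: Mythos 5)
Your construction of $\gg$ and the surrounding bookkeeping --- the reformulation of nullity via $P2\frac{1}{2}$, the trichotomy, the identification of the trivial and top classes, the monotonicity and closure clauses --- track the paper's Appendix A closely and are fine. The genuine gap is exactly the step you yourself flag as ``the main technical obstacle'': transitivity of $\gg$ (and hence that comparability is an equivalence relation and the quotient order is total). You do not prove it; you only announce a plan to extract a comparative-likelihood relation from $P4\frac{1}{2}$ in the style of Savage and to ``transport'' its transitivity onto $\gg$. That plan is both unexecuted and pointed in the wrong direction. The paper's dominance lemma proves transitivity with no appeal to $P4\frac{1}{2}$ at all: from $A\gg B\gg C$ one works inside $A\cup B\cup C$ and uses only the nullity calculus --- subevents of null events are null, disjoint unions of null events are null, and the transport fact that for $C\subseteq B\subseteq A$ with $B$ non null at $A$, nullity of $C$ at $B$ and at $A$ coincide (all consequences of $P1\frac{1}{2}$--$P3\frac{1}{2}$ and $P5\frac{1}{2}$ via the order-preserving lemma) --- to conclude that $B\cup C$ is null at $A\cup B\cup C$ while $A$ is not, whence $A\gg C$. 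Your premise that ``nullity alone records only agreement of preferences'' and therefore cannot be chained is false: the strict half of $P2\frac{1}{2}$ together with order preservation makes relative nullity compose directly.

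Moreover, the detour you propose would run into a circularity if carried out. $P4\frac{1}{2}$ only yields a prize-independent likelihood order $\geq_A$ among subevents of a \emph{fixed} indexing event $A$, and the paper's ``event independence'' lemma, which is what lets one move $\geq$ between different indexing events, is only available when the smaller event is non null at the larger one --- i.e., it presupposes knowledge of exactly the relative-nullity structure you are trying to order. The qualitative-probability machinery is therefore built on top of the hierarchy of Theorem \ref{Theo001} (it is what Theorem \ref{Theo002} needs), not something from which the hierarchy can be founded. As written, the central claim of the theorem --- that the classes partition $\Sigma$ and that $\gg$ is an irreflexive, transitive, total order on them --- remains unproved in your proposal.
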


From $P1\frac{1}{2}$ to $P6\frac{1}{2}$, we have the following theorem
asserting the existence of a subjective expected utility representation for
the indexed preferences, and describing the relations intraclass and
interclass for these representations. The proof is a series of results in appendices A and B.

\begin{theorem}
\label{Theo002} For a non trivial class $\alpha$, there exists a unique (up
to a positive affine transformations) function $u_{\alpha}:O\to\mathbb{R}$,
and for each event $A$ of the class, there exists a unique finitely
additive, convex-valued probability measure\footnote{%
Indeed, $P_{A}\left( \cdot \right) $ is defined on $\left. \Sigma
\right\vert _{A}$, but it\ can be extended to a probability on $\Sigma $ as $P_{A}\left( A\cap \cdot \right) $. We will keep the simpler notation in the paper.} $P_{A}$ on $\Sigma $, such that, for each pair of acts $f$ and $g$,
\begin{equation*}
f\succsim _{A}g\Leftrightarrow \int u_{\alpha }\left( f\right) dP_{A}\geq
\int u_{\alpha }\left( g\right) dP_{A}.
\end{equation*}%
Moreover, for another event $B\subseteq A$ of the class, $P_{A}\left(
C\right) =P_{B}\left( C\right) P_{A}\left( B\right) $ for an event $%
C\subseteq B$. In addition, different non trivial classes admit different
attitudes toward risk when $O$ is not a mixture space.
\end{theorem}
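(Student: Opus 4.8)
The plan is to reduce the entire statement to Savage's representation theorem applied to each indexed preference separately, and then to use the sure-thing principle to glue these per-event representations together inside a class. First I would fix a non-empty event $A$ and check that $\succsim_A$, viewed as a preference over acts restricted to the local state space $A$, satisfies the ordinary Savage postulates P1--P6. The weak-order property is assumed; $P1\frac{1}{2}$ guarantees that $\succsim_A$ depends only on the restriction of acts to $A$, so it descends to a genuine preference on acts over $A$; Savage's sure-thing principle internal to $\succsim_A$ follows from $P1\frac{1}{2}$ and $P2\frac{1}{2}$ applied to sub-bipartitions of $A$; eventwise monotonicity and comparative probability come from $P3\frac{1}{2}$ and $P4\frac{1}{2}$; non-degeneracy from $P5\frac{1}{2}$ together with $P3\frac{1}{2}$; and small-event continuity from $P6\frac{1}{2}$. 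Savage's theorem, in the finite-range formulation of Machina and Schmeidler \cite{MachSchm1992}, then yields for each non-empty $A$ a utility $u_A$ unique up to positive affine transformation and a unique finitely additive probability $P_A$ with $f\succsim_A g\Leftrightarrow\int u_A(f)\,dP_A\geq\int u_A(g)\,dP_A$, where the convex-valuedness of $P_A$ is precisely the atomlessness delivered by $P6\frac{1}{2}$.

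The central step is to show that the utility is constant on each class and that the probabilities condition correctly. The key lemma is that whenever $A\subseteq D$ and $A$ is non-null at $D$, the preference $\succsim_A$ coincides with the Savage conditional of $\succsim_D$ given $A$: taking $\phi=fAh$ and $\psi=gAh$ in $P2\frac{1}{2}$ for the bipartition $D=A\cup(D\sm A)$, the conjunct on $D\sm A$ holds trivially because both acts equal $h$ there, while the conjunct on $A$ reduces, by $P1\frac{1}{2}$, to $f\succsim_A g$, giving $f\succsim_A g\Leftrightarrow fAh\succsim_D gAh$. Since conditioning an SEU preference on a non-null event preserves the utility index and replaces the measure by its conditional, this forces $u_A$ to agree with $u_D$ up to a positive affine transformation and $P_A=P_D(\cdot\mid A)$. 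For two events $A,B$ of the same non-trivial class $\alpha$, Theorem \ref{Theo001} places $D:=A\cup B$ in $\alpha$ and makes both $A$ and $B$ non-null at $D$, so applying the lemma twice shows $u_A$, $u_B$ and $u_D$ are mutually affinely equivalent and a single representative $u_\alpha$ may be chosen for the whole class; specializing to $B\subseteq A$ (so $D=A$) gives $P_B=P_A(\cdot\mid B)$, which is exactly the asserted identity $P_A(C)=P_B(C)\,P_A(B)$ for $C\subseteq B$. The hard part will be making this gluing airtight over a possibly uncountable class: one must check that the pairwise affine equivalences of the $u_A$ are mutually consistent (they are, since affine equivalence is a genuine equivalence relation and every pair is linked through the union $D=A\cup B$) and that ``non-null at $D$'' is exactly the condition $P_D(A)>0$ that makes $P_D(\cdot\mid A)$ well-defined and non-degenerate.

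For the final assertion I would isolate the only axiom linking preferences across classes, namely $P3\frac{1}{2}$: restricted to constant acts it reads $f\succsim_A g\Leftrightarrow f\succsim_S g$ for every non-empty $A$, which translates into $u_\alpha(f)\geq u_\alpha(g)\Leftrightarrow u_\beta(f)\geq u_\beta(g)$, that is, the class utilities are merely \emph{ordinally} equivalent. I would then verify that no further cardinal tie is imposed: whenever $B$ lies in a class strictly below $A$ with $B\subseteq A$, Theorem \ref{Theo001} makes $B$ null at $A$, so the sure-thing principle $P2\frac{1}{2}$ never combines the class-$\alpha$ and class-$\beta$ utilities inside a single expectation, and the cardinal calibration of $u_\alpha$, obtained internally from betting against the atomless $P_A$, is logically independent of that of $u_\beta$. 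Hence the representation constrains $u_\alpha$ and $u_\beta$ only to be ordinally equivalent. When $O$ carries no mixture structure, ordinal equivalence is strictly weaker than affine equivalence, so one may take $u_\beta$ to be a strictly concave, non-affine transform of $u_\alpha$ while keeping every axiom intact; this is the precise sense in which distinct classes admit distinct attitudes toward risk, in contrast to the mixture-space setting of \cite{BBD1991}, where the von Neumann--Morgenstern structure collapses ordinal into cardinal equivalence and thereby forces a single risk attitude across levels.
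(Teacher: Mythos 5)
Your proposal is correct in substance but reaches the representation by a genuinely different route than the paper. You treat Savage's theorem as a black box: verify P1--P6 for each $\succsim_A$ separately, invoke existence \emph{and uniqueness} of the Savage pair $(u_A,P_A)$, and then propagate the utility and the conditioning identity across a class via the observation that $\succsim_A$ is the Savage conditional of $\succsim_D$ on a non-null $A\subseteq D$ (the paper's ``order-preserving'' lemma) together with uniqueness. The paper instead rebuilds the representation from scratch in the indexed setting: Appendix A derives the qualitative probability $\geq_A$ directly from the axioms, shows it is fine and tight, obtains the unique $P_A$ and the conditioning identity $P_A(C)=P_B(C)P_A(B)$ from ``weak comparative probability -- event independence''; Appendix B then pushes each act forward to the lottery $L_A^f=P_A\circ f|_A^{-1}$, proves the induced lottery preference $\geqq_A$ satisfies vNM independence and the Archimedean property (the long Independence lemma is the technical heart), applies Fishburn's lottery theorem to get $u_A$, and obtains class-constancy of $u_\alpha$ by showing $\geqq_A$ and $\geqq_B$ agree whenever $A\approx B$. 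Your route is shorter and buys economy of argument, but it leans on two things you should make explicit: (i) Savage's P3 for $\succsim_A$ and the statement ``non-null at $D$ iff $P_D(A)>0$'' both require identifying the paper's notion of nullity (agreement of $\succsim_{A\backslash B}$ with $\succsim_A$) with Savage's notion of nullity for $\succsim_A$ --- this equivalence is exactly what the paper's Lemma \ref{lemma 3} and Lemma \ref{lemma 3 converse} establish and is not automatic; (ii) the uniqueness clause of Savage's theorem applied to the conditional preference requires that $P_D(\cdot\mid A)$ be convex-ranged, which you get from convex-rangedness of $P_D$. The paper's lottery machinery, by contrast, is not wasted effort: it is reused verbatim for the mixture-space discussion that underlies the final claim about class-dependent risk attitudes, where your argument (ordinal equivalence on constants via $P3\frac{1}{2}$, no cardinal cross-class tie, hence non-affine ordinal transforms are admissible when $O$ is not a mixture space) matches the paper's Lemma \ref{ACL} and Lemma \ref{FALC} in spirit, though like the paper you assert rather than construct a model realizing genuinely distinct risk attitudes.
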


We are now ready to establish our main representation result. It follows from $P0\frac{1}{2}$ to $P6\frac{1}{2}$, and $SE$, and it asserts the existence of a generalized lexicographic subjective
expected utility representation.

\begin{theorem}[Representation Result]
\label{Theo004}There exists a family of real functions on $O$, $\left\{
u_{E}\right\} _{E\in \mathcal{E}}$, with each $u_{E}$ unique (up to positive
affine transformations), and a unique family of finitely additive,
convex-valued probability measures on $\Sigma $, $\left\{ P_{E}\right\}
_{E\in \mathcal{E}}$, such that, for each pair of acts $f$ and $g$,%
\begin{equation*}
f\succsim g
\end{equation*}%
\begin{equation*}
\text{iff}
\end{equation*}%
\begin{equation*}
\begin{array}{c}
\int u_{E}\left( g\right) dP_{E}>\int u_{E}\left( f\right) dP_{E} \\ 
\text{for some }E\in \mathcal{E}%
\end{array}%
\Rightarrow 
\begin{array}{c}
\int u_{E^{\prime }}\left( f\right) dP_{E^{\prime }}>\int u_{E^{\prime
}}\left( g\right) dP_{E^{\prime }} \\ 
\text{for some }E^{\prime }\in \mathcal{E}\text{ st }E^{\prime }\supseteq E%
\end{array}%
.
\end{equation*}
\end{theorem}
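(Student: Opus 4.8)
The plan is to obtain the representation by feeding the per-class expected-utility representations of Theorem \ref{Theo002} into the lexicographic bridge supplied by Axiom $P0\frac{1}{2}$; the heavy structural work having already been done, the proof is largely a translation. First I would fix the representing data. Axiom $SE$ furnishes the subfamily $\mathcal{E}\subseteq\Sigma\setminus\{\varnothing\}$, so every $E\in\mathcal{E}$ is non-empty and, by Theorem \ref{Theo001}, lies in a non-trivial class $\alpha(E)$. Theorem \ref{Theo002} then assigns to this class a utility index, and to the event $E$ a finitely additive, convex-valued probability measure $P_E$ on $\Sigma$, such that $\succsim_E$ is represented by $\int u_{\alpha(E)}(\cdot)\,dP_E$. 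Setting $u_E:=u_{\alpha(E)}$ produces the candidate families $\{u_E\}_{E\in\mathcal{E}}$ and $\{P_E\}_{E\in\mathcal{E}}$, which automatically carry the regularity asserted in the statement.

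Next I would translate strict indexed preferences into strict inequalities. Since the order of $\mathbb{R}$ is total, the equivalence of Theorem \ref{Theo002} yields, for every $E\in\mathcal{E}$, that $g\succ_E f$ holds exactly when $\int u_E(g)\,dP_E>\int u_E(f)\,dP_E$, and likewise $f\succ_{E'}g$ holds exactly when $\int u_{E'}(f)\,dP_{E'}>\int u_{E'}(g)\,dP_{E'}$. Substituting these two equivalences into Axiom $P0\frac{1}{2}$---which already reads ``$f\succsim g$ iff $g\succ_E f$ for some $E$ implies $f\succ_{E'}g$ for some $E'\supseteq E$''---reproduces verbatim the biconditional of the theorem. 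This settles existence with no genuine computation: all the content is carried by Theorems \ref{Theo001} and \ref{Theo002} and by $P0\frac{1}{2}$.

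For uniqueness I would invoke the uniqueness clause of Theorem \ref{Theo002} level by level. Because $\int u_E(\cdot)\,dP_E$ is constructed to represent the primitive preference $\succsim_E$, the measure $P_E$ is forced to be the unique one attached to $E$ and $u_E$ is pinned down up to a positive affine transformation; any two events of $\mathcal{E}$ in a common class thereby share a single utility index, consistently with the scaling relation $P_A(C)=P_B(C)\,P_A(B)$. The point worth emphasizing is that, unlike a bare lexicographic representation of $\succsim$, ours admits no independent rescaling of the separate levels: each representing pair is tethered to one of the observable indexed preferences, and it is precisely this tethering---guaranteed by $SE$, which supplies exactly the relevant ``top'' events of each class without redundancy---that makes the families genuinely unique rather than unique only up to a level-by-level recalibration.

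The step I expect to require the most care is this uniqueness argument, and within it the verification that the family $\mathcal{E}$ delivered by $SE$ neither omits a class nor duplicates one: an omitted class would leave a level of the hierarchy unrepresented, while a redundant $E$ would reintroduce exactly the free rescaling I wish to exclude. Existence is a substitution; the delicacy is to show that the separating and non-redundancy properties of $SE$, together with the multiplicative scaling relation of Theorem \ref{Theo002}, rigidly bind each representing pair $(u_E,P_E)$ to a single indexed preference, so that the stated uniqueness is genuine.
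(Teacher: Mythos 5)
Your proposal follows the same route as the paper: the biconditional is obtained by substituting the expected-utility equivalences of Theorem \ref{Theo002} for the strict relations $\succ_E$ appearing in Axiom $P0\frac{1}{2}$, and uniqueness is inherited level by level from the uniqueness clause of Theorem \ref{Theo002}. That part is fine and is exactly how the paper closes the argument.

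The step you flag as ``requiring the most care'' but never carry out is, however, precisely the content the paper's proof actually supplies, and it is the only part of the argument specific to this theorem: deriving the structure of $\mathcal{E}$ from the two clauses of $SE$. Concretely, the paper first notes that $P3\frac{1}{2}$ and $P5\frac{1}{2}$ make every non-empty event non-null at itself; the contrapositive of the first clause of $SE$, applied with $B=A$, then yields for every non-empty $A$ some $E\in\mathcal{E}$ with $A\subseteq E$ and $A$ non-null at $E$, hence $E$ in the same class as $A$. Thus $\mathcal{E}$ is cofinal in every non-trivial class and no level of the hierarchy is omitted. The second clause of $SE$, applied to $A=E\cup E'$ for two same-class members $E,E'\in\mathcal{E}$ (neither of which can be null at the union, since they are in the same class), forces $\succsim_{E}$ to agree with $\succsim_{E'}$; by Theorem \ref{Theo002} their Bernoulli indices then coincide with the class index and $P_E$, $P_{E'}$ assign probability one to one another --- the ``essential top event'' property that makes within-class redundancy harmless rather than a source of free recalibration. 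Without this derivation your uniqueness discussion rests on the assertion that $SE$ ``supplies exactly the relevant top events of each class without redundancy,'' which is a consequence to be proved from the axiom's two implications, not something the axiom states. With that derivation supplied, your argument coincides with the paper's.
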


\begin{proof}
First, note that $P3\frac{1}{2}$ and $P5\frac{1}{2}$ implies that
preferences indexed by non empty events are non degenerated, so no non empty
event is null at itself, and, by $SE$, for each non empty event $A$, there
is an event $E$ in $\mathcal{E}$,\ containing\ $A$, and belonging to same
class than $A$. Thus, $\mathcal{E}$\ contains a cofinal subset of each non
trivial class. By $SE$, for each pair of events of $\mathcal{E}\ $in a
class, $E$ and $E^{\prime }$, $\succsim _{E}$ agree with $\succsim
_{E^{\prime }}$, so $E\sm E^{\prime }$, $E^{\prime }\sm E$ and $E\Delta E^{\prime
} $ are null events at some event of $\mathcal{E}\ $in the class, containing 
$E\cup E^{\prime }$ (the class is closed for finite unions, and $\mathcal{E}$%
\ contains a cofinal subset of each non trivial class). By $\left( \ref%
{Theo002}\right) $, each event of $\mathcal{E}$\ in the class has a
subjective expected utility representation, and they are equivalent
representations. More specifically, the Bernoulli indexes of those
representations are equal to the Bernoulli index of the class, and the
beliefs (extended to $\Sigma $) assign the same probability to the events.
As every event in the class is contained in some event in that subset of the
class, the belief (extended to $\Sigma $) of the subjective utility
representation of every event in the class which contains an event in that
subset of the class assign probability one to each event in that subset of
the class. In other words, they are essential top events. And, given the derived properties of $\mathcal{E}$, the representation for $\succsim$ follows from $P0\frac{1}{2}$.
\end{proof}

Each essential ``top events" in $\mathcal{E}$\ corresponds to a class, so the
description of the relations intraclass and interclass in Theorem \ref{Theo002} holds for the representation in Theorem \ref{Theo004}.

We now move to the issue of observability of $\succsim_A$, that is, that $\succsim_A$ can be inferred from choices that respect $\succsim$ over acts that ``strongly" reveal the subjective assessments of the decision maker as to the event $A$ relative to its complement $S\sm A$.

\begin{theorem}[Observability of informed preferences]
\label{Theo003}For each non empty event $A$, and each triple of acts $f$, $g$
and $h$, $f\succ _{A}g$ if, and only if, $fAh\succ gAh$ and, for each constant act $k$,
there exists a finite $\Sigma -$measurable partition of $A$, $(A_{i})_{i=1}^n$, such
that, for each $i=1,...,n$, $kA_{i}\left( fAh\right) \succ gAh$ and $fAh\succ
kA_{i}\left( gAh\right) $.
\end{theorem}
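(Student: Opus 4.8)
The plan is to reduce everything to the subjective expected utility representations furnished by Theorem \ref{Theo002} together with the lexicographic structure encoded in $P0\frac12$ (equivalently, Theorem \ref{Theo004}). Write $\beta$ for the class of the non-empty event $A$. By $SE$ (as extracted in the proof of Theorem \ref{Theo004}) there is a top event $E_\beta\in\mathcal{E}$ in class $\beta$ with $A\subseteq E_\beta$, and by Theorem \ref{Theo002} the preferences $\succsim_A$ and $\succsim_{E_\beta}$ share the class index $u_\beta$ with $P_{E_\beta}(C)=P_A(C)\,P_{E_\beta}(A)$ for every $C\subseteq A$, where $P_{E_\beta}(A)>0$ since $A$ is non-null in its own class. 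For acts $p,q$ agreeing off $A$, say $p=p'Ah$ and $q=q'Ah$, a one-line computation gives $\int u_\beta(p)\,dP_{E_\beta}-\int u_\beta(q)\,dP_{E_\beta}=P_{E_\beta}(A)\big(\int u_\beta(p')\,dP_A-\int u_\beta(q')\,dP_A\big)$, so the class-$\beta$ comparison of $p$ and $q$ coincides, up to the positive factor $P_{E_\beta}(A)$, with the $\succsim_A$-comparison of $p'$ and $q'$. Since $A$ is null at every class $\gg\beta$, such $p,q$ are indifferent at all higher classes; hence, by the lexicographic rule $P0\frac12$, a strict class-$\beta$ advantage is decisive. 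This yields the \emph{Key Lemma}: for any acts $p',q',h$, if $p'\succ_A q'$ then $p'Ah\succ q'Ah$.

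For the forward implication, assume $f\succ_A g$. The Key Lemma (with $p'=f$, $q'=g$) gives $fAh\succ gAh$. Fix a constant act $k$. Applying $P6\frac12$ to $f\succ_A g$ with the constant $k$ produces a finite partition $(A_i)$ of $A$ with $kA_i f\succ_A g$ and $f\succ_A kA_i g$ for each $i$. Feeding these into the Key Lemma (note $(kA_i f)Ah=kA_i(fAh)$ and $(kA_i g)Ah=kA_i(gAh)$) yields $kA_i(fAh)\succ gAh$ and $fAh\succ kA_i(gAh)$, which is exactly the robustness clause for this $k$. As $k$ was arbitrary, the forward direction is complete.

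For the converse, assume $fAh\succ gAh$ together with the robustness clause, and suppose toward a contradiction that $f\not\succ_A g$. Completeness of $\succsim_A$ leaves two cases. If $g\succ_A f$, the Key Lemma gives $gAh\succ fAh$, contradicting $fAh\succ gAh$; thus $fAh\succ gAh$ alone already excludes this case. The remaining case is $f\sim_A g$, i.e. $\int u_\beta(f)\,dP_A=\int u_\beta(g)\,dP_A$, so the class-$\beta$ values of $fAh$ and $gAh$ are \emph{exactly} equal and $fAh\succ gAh$ must be decided at a class strictly below $\beta$. Here I would defeat the robustness clause by exhibiting a single constant $k$ for which every finite partition has a ``bad'' piece. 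Because the class-$\beta$ values are exactly tied, any perturbation on a piece of positive $P_{E_\beta}$-measure that strictly changes the class-$\beta$ value flips the decisive comparison (again since $A$ is null above $\beta$). Using finiteness of ranges, non-atomicity (convex-valuedness) of $P_{E_\beta}$ on $A$, and nondegeneracy $P5\frac12$, I would take $k$ to be the $u_\beta$-minimal outcome attained by $f$ on $A$ when $f$ is non-constant there, and otherwise perturb $f$ downward or $g$ upward according to whether a strictly worse or a strictly better constant is available, at least one of which exists by $P5\frac12$; in each subcase every finite partition of $A$ has a piece of positive $P_{E_\beta}$-measure on which the relevant value strictly moves, breaking one of the two strict inequalities. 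This contradicts robustness, forcing $f\succ_A g$.

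The main obstacle is the backward direction, and specifically the quantifier order in the robustness clause: the constant $k$ must be chosen before the partition, so a single $k$ has to produce a bad piece against every finite partition chosen adversarially. Handling the degenerate configurations—where $f$ (or $g$) is $u_\beta$-extremal on $A$, so that one perturbation direction is unavailable—is where the argument is most delicate, and is exactly why the statement carries both perturbation inequalities, $kA_i(fAh)\succ gAh$ and $fAh\succ kA_i(gAh)$. A secondary point requiring care is the ``decisive class'' step underlying both directions: over a possibly order-dense hierarchy one must verify directly from $P0\frac12$ that indifference at all classes $\gg\beta$ together with strictness at $\beta$ yields unconditional strictness, which uses the cofinality of $\mathcal{E}$ in each class and the closure of classes under unions with lower classes established in Theorem \ref{Theo001}.
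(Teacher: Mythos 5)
Your overall architecture coincides with the paper's: a ``decisiveness'' lemma saying that for acts agreeing off $A$ a strict comparison at $A$'s class transfers to $\succ$ (the paper gets this from the order-preserving lemma plus $P0\frac{1}{2}$, using cofinality of $\mathcal{E}$ and closure of classes under unions with lower classes, exactly as you note); the forward direction via $P6\frac{1}{2}$ applied to $fAh\succ_A gAh$; and the backward direction by killing $g\succ_A f$ with the decisiveness lemma and killing $f\sim_A g$ by exhibiting one constant that defeats the robustness clause against every partition. Those parts are correct and match the paper.

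The one place your plan is genuinely incomplete is the construction of the defeating constant in the tie case, which you yourself flag as delicate. Your candidate ($k=$ the $u_\beta$-minimal outcome of $f$ on $A$, with an ``otherwise'' branch invoking $P5\frac{1}{2}$) forces a multi-way case analysis on whether $f$ and $g$ are essentially constant on $A$ and on which side a non-indifferent constant lives, and as written the ``otherwise'' branch is not a proof. The paper closes this cleanly with a two-sided device: pick $k\succ_S k'$ with $k$ weakly preferred to every outcome in the finite set $f(A)\cup g(A)$ and every such outcome weakly preferred to $k'$ (possible by $P3\frac{1}{2}$, $P5\frac{1}{2}$ and completeness on constants). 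If the upward perturbation of $g$ by $k$ is never strict on a non-null piece and the downward perturbation of $f$ by $k'$ is never strict, then $gAh\sim_A kAh\succ_A k'Ah\sim_A fAh$, contradicting $fAh\sim_A gAh$; so at least one of $k,k'$ works, and the observation that $\bigcup_i\bigl(A_i\cap g^{-1}(o')\bigr)=A\cap g^{-1}(o')$ forces a non-null bad piece in \emph{every} partition, which is what beats the adversarial quantifier order you correctly identified as the crux. If you replace your ad hoc choice of $k$ with this sandwich pair and trichotomy, your argument becomes the paper's.
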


\begin{proof}
Note that $fAh\sim gAh$ implies $fAh\sim _{E}gAh$ for any top event $%
E\supseteq A$ of the class of $A$, and, by the lemma order-preserving (see
appendix Proofs), this implies $fAh\sim _{A}gAh$, which implies $f\sim _{A}g$
(eq. $f\succ _{A}g$ implies $fAh\succ _{A}gAh$, and using the same steps and
lemma, we have $fAh\succ gAh$), but the converse is not necessarily true. We
can have $fAh\succ gAh$ and $fAh\sim _{A}gAh$, if $fAh$ is (strictly)
preferred to $gAh$ for some class $\alpha $ lower than the class of $A$, and
for any class higher than $\alpha $, $gAh$ is not (strictly) preferred to $%
fAh$. In what follows we discuss the effects of perturbations on the case $%
fAh\sim _{A}gAh$.

By $P3\frac{1}{2}$ and $P5\frac{1}{2}$, we can choose a pair of constant
acts $k\succ _{S}k^{\prime }$. Besides, $k$ and $k^{\prime }$ can satisfy
the following: $k$ is preferred (by $P3\frac{1}{2}$,\ we do not need to
specify an indexed preference) to any constant act with outcome belonging to 
$f\left( A\right) \cup g\left( A\right) $ (a finite set), and any constant
act with outcome belonging to $f\left( A\right) \cup g\left( A\right) $ is
preferred to $k^{\prime }$. A finite $\Sigma -$measurable partition of $A$, $%
A_{i}$, defines a finer finite $\Sigma -$measurable partition of $A$, $%
P_{f}=\left\{ A_{i}\cap f^{-1}\left( o\right) :i\text{ and }o\in f\left(
A\right) \right\} $ (for $g$, the discussion that follows is analogous). For 
$P_{f}$, we have $\cup _{i}A_{i}\cap f^{-1}\left( o\right) =A\cap
f^{-1}\left( o\right) $, so, if $A\cap f^{-1}\left( o\right) $ is non null
at $A$, $A_{i}\cap f^{-1}\left( o\right) $ is non null at $A$ for some $i$
(name it $i_{o}$). Thus, if for each $o^{\prime }\in f\left( A\right) $ such
that $A\cap f^{-1}\left( o^{\prime }\right) $ is non null at $A$, $k$ is
preferred to the constant act with outcome $o^{\prime }$, and this ordering
is strict at least once (name it $o$), then\footnote{%
If for each $o^{\prime }\in f\left( A\right) $ such that $A\cap f^{-1}\left(
o^{\prime }\right) $ is non null at $A$, $k$ is preferred to the constant
act with outcome $o^{\prime }$, and this ordering is strict at least once
(name it $o$), then 
\begin{equation*}
k\left( A_{i_{o}}\cap f^{-1}\left( o\right) \right) \left( fAh\right) \succ
_{A_{i_{o}}\cap f^{-1}\left( o\right) }fAh
\end{equation*}%
and 
\begin{equation*}
k\left( A_{i_{o}}\cap f^{-1}\left( o\right) \right) \left( fAh\right) \sim
_{A-\left( A_{i_{o}}\cap f^{-1}\left( o\right) \right) }fAh,
\end{equation*}%
so, by $P2\frac{1}{2}$, 
\begin{equation*}
k\left( A_{i_{o}}\cap f^{-1}\left( o\right) \right) \left( fAh\right) \succ
_{A}fAh.
\end{equation*}%
This one element substitution can be repeated for each $o^{\prime \prime
}\in f\left( A\right) $ such that $A_{i_{o}}\cap f^{-1}\left( o^{\prime
\prime }\right) $ is non empty, and, by $P2\frac{1}{2}$ at each step, after
a finite number of one element substitutions, we obtain 
\begin{equation*}
kA_{i_{o}}\left( fAh\right) \succ _{A}fAh.
\end{equation*}%
} $kA_{i_{o}}\left( fAh\right) \succ _{A}fAh$. And, if for each $o^{\prime
}\in f\left( A\right) $ such that $A\cap f^{-1}\left( o^{\prime }\right) $
is non null at $A$, the constant act with outcome $o^{\prime }$ is preferred
to $k^{\prime }$, and this ordering is strict at least once (name it $o$),
then $fAh\succ _{A}k^{\prime }A_{i_{o}}\left( fAh\right) $. Summing up, for
each $o\in f\left( A\right) $ such that $A\cap f^{-1}\left( o\right) $ is
non null at $A$, either $k$ is strictly preferred to the constant act with
outcome $o$, or the constant act with outcome $o$ is strictly preferred to $%
k^{\prime }$, or both, so, for each $o\in f\left( A\right) $ such that $%
A\cap f^{-1}\left( o\right) $ is non null at $A$, and for each finite $%
\Sigma -$measurable partition of $A$, $A_{i}$, 
\begin{equation*}
kA_{i_{o}}\left( fAh\right) \succ _{A}fAh\text{ or }fAh\succ _{A}k^{\prime
}A_{i_{o}}\left( fAh\right) .
\end{equation*}

In addition (for $f$ and $g$ shifted, the discussion that follows is
analogous), if, for each $o\in f\left( A\right) $ such that $A\cap
f^{-1}\left( o\right) $ is non null at $A$, the constant act with outcome $o$
is indifferent to $k^{\prime }$, and, for each $o^{\prime }\in g\left(
A\right) $ such that $A\cap g^{-1}\left( o^{\prime }\right) $ is non null at 
$A$, $k$ is indifferent to the constant act with outcome $o^{\prime }$, then 
$gAh\sim _{A}kAh\succ _{A}k^{\prime }Ah\sim _{A}fAh$, an absurd . So, if $%
k^{\prime }Ah\sim _{A}fAh$, then for at least one $o^{\prime }\in g\left(
A\right) $ such that $A\cap g^{-1}\left( o^{\prime }\right) $ is non null at 
$A$, $k$ is strictly preferred to the constant act with outcome $o^{\prime }$%
, and $kA_{i_{o^{\prime }}}\left( gAh\right) \succ _{A}gAh$. And, if $%
kAh\sim _{A}fAh$, then for at least one $o^{\prime }\in g\left( A\right) $
such that $A\cap g^{-1}\left( o^{\prime }\right) $ is non null at $A$, the
constant act with outcome $o^{\prime }$ is strictly preferred to $k^{\prime
} $, and $gAh\succ _{A}k^{\prime }A_{i_{o^{\prime }}}\left( gAh\right) $.
Summing up, for each finite $\Sigma -$measurable partition of $A$, $A_{i}$,%
\footnote{$k^{\prime }Ah\sim _{A}fAh$ means that perturbations of $fAh$ are
in the upper set of $fAh$, with some in the strict upper set.} 
\begin{equation*}
k^{\prime }Ah\sim _{A}fAh\Rightarrow \exists i\left( kA_{i}\left( gAh\right)
\succ _{A}gAh\right) ,
\end{equation*}%
\begin{equation*}
kAh\sim _{A}fAh\Rightarrow \exists i\left( gAh\succ _{A}k^{\prime
}A_{i}\left( gAh\right) \right) ,
\end{equation*}%
and,%
\begin{equation*}
kAh\succ _{A}fAh\succ _{A}k^{\prime }Ah\Rightarrow \exists i,j\left(
kA_{i}\left( fAh\right) \succ _{A}fAh\succ _{A}k^{\prime }A_{j}\left(
fAh\right) \right) .
\end{equation*}

Concluding, the set of rules above shows that, for $fAh\sim _{A}gAh$, and
using $k$ and $k^{\prime }$, any finite $\Sigma -$measurable partition of $A$%
, $A_{i}$, generates a set of perturbed versions of $fAh$, and a set of
perturbed versions of $gAh$, which meet at some side (strict upper/lower
set) of $fAh\sim _{A}gAh$.

On the other hand, for $fAh\succ _{A}gAh$, $P6\frac{1}{2}$ ensures that, for
each constant act $k$, there exists a finite $\Sigma -$measurable partition
of $A$, $A_{i}$, such that, for each $i$, $kA_{i}\left( fAh\right) \succ
_{A}gAh$ and $fAh\succ _{A}kA_{i}\left( gAh\right) $.

As, for each strict ordering for the preference indexed by $A$ corresponds a
strict ordering for $\succsim $ (Savage-like) conditioned by $A$, we can
conclude that $fAh\succ gAh$ ($gAh\succ fAh$) with $fAh\sim _{A}gAh$ is not
preserved by perturbations in the sense of $P6\frac{1}{2}$.
\end{proof}

As we advanced in the introduction, our concept of conditioning is more intricate than Savage's conditioning. It involves Savage's conditioning, and also conditioning of perturbations of acts. Using
this concept of conditioning, we can define indexed preferences from $\succsim $. In this approach, the axioms are properties of $\succsim $. In particular, $P0\frac{1}{2}$ says that $\succsim $ has an internal
consistency rule that says that more inclusive events are more decisive for the ordering between acts.

\section{Conclusion}\label{conclusion}

We provided a first step into a fully general foundation to subjective lexicographic expected utility. For the applied literature, we provide foundations for source-dependent risk attitudes (e.g. Skiadas \cite{Skiadas}.) For off-equilibrium reasonings in dynamic games, we provide a general theory supporting standard arguments that invoke ``infinitely less likely events" in dynamic games with infinite horizon. Although we do show that there is no need to use a family of indexed preferences as a primitive of the model, we still need to impose an a priori lexicographic order to obtain our representation. For the future, we plan to dispense with such assumption, and obtain a lexicographic order directly from the implied hierarchy of classes of events that follow from our other axioms. We also plan to explore the implications of our ``perturbed" conditionals to the foundations of refinements of equilibria in game theory.

\newpage

\appendix{}

\section{Proofs}

\begin{lemma}
For each $A\in \Sigma$, $\succsim _{A}$ is a weak preference on $F$
such that, for each acts $f$ and $g$,%
\begin{equation*}
f\succ _{A}g\Rightarrow fAh\succ _{A}gAh\text{ for each act }h
\end{equation*}%
and%
\begin{equation*}
fAh\succ _{A}gAh\text{ for some act }h\Rightarrow f\succ _{A}g.
\end{equation*}

\begin{proof}
Observe that $gAh\succsim _{A}fAh$ for some act $h$ implies $g\succsim _{A}f$%
, thus, if $f\succ _{A}g$ then $fAh\succ _{A}gAh$ for each act $h$. Besides,
observe that $g\succsim _{A}f$ implies that $gAh\succsim _{A}fAh$ for each
act $h$, thus, if $fAh\succ _{A}gAh$ for some act $h$, then $f\succ _{A}g$.
\end{proof}
\end{lemma}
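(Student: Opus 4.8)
The statement that $\succsim_A$ is a weak preference on $F$ is part of the primitive data of the model, so nothing is to be shown on that score; the content lies in the two displayed strict implications, which are the strict-preference shadow of Axiom $P1\frac{1}{2}$. The plan is to obtain each of them by applying the two clauses of $P1\frac{1}{2}$ in tandem — one clause directly and the other in contrapositive form with the roles of $f$ and $g$ interchanged. The legitimacy of the role-swapped reading is immediate, since $P1\frac{1}{2}$ is stated for an arbitrary ordered pair of acts. Throughout I would unfold $f\succ_A g$ as its definition, namely $f\succsim_A g$ together with not $g\succsim_A f$ (Notation \ref{StrictandIndifference}).

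For the first implication, fix an arbitrary act $h$. From $f\succsim_A g$ the first clause of $P1\frac{1}{2}$ gives the weak comparison $fAh\succsim_A gAh$. From not $g\succsim_A f$, I would invoke the second clause of $P1\frac{1}{2}$ read with $f$ and $g$ swapped, that is, ``$gAh\succsim_A fAh$ for some act $h$ implies $g\succsim_A f$''; its contrapositive yields that $gAh\succsim_A fAh$ fails for every $h$, in particular for the fixed one. Combining the weak comparison with this asymmetry gives $fAh\succ_A gAh$, and since $h$ was arbitrary the conclusion holds for each $h$.

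For the second implication, suppose $fAh\succ_A gAh$ for some act $h$. Then $fAh\succsim_A gAh$ for that $h$, so the second clause of $P1\frac{1}{2}$ delivers $f\succsim_A g$. To secure asymmetry I would use the first clause of $P1\frac{1}{2}$ with $f$ and $g$ swapped, ``$g\succsim_A f$ implies $gAh\succsim_A fAh$ for each act $h$''; its contrapositive, fed the fact not $gAh\succsim_A fAh$ extracted from $fAh\succ_A gAh$, yields not $g\succsim_A f$. Hence $f\succsim_A g$ and not $g\succsim_A f$, that is, $f\succ_A g$.

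I do not anticipate a genuine obstacle: the argument is purely propositional once $P1\frac{1}{2}$ and the definition of $\succ_A$ are in hand. The only point requiring care is the bookkeeping of the quantifiers — the first claim is universal in $h$ while the second is existential — and matching the direct and contrapositive applications to the correct pieces ($f\succsim_A g$ versus not $g\succsim_A f$). If one prefers to avoid the first clause of $P1\frac{1}{2}$ altogether, completeness of $\succsim_A$ offers an alternative route, since for a complete relation ``not $gAh\succsim_A fAh$'' already forces ``$fAh\succsim_A gAh$''.
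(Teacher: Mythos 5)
Your proposal is correct and follows essentially the same route as the paper: both clauses of $P1\frac{1}{2}$ are applied, one directly to obtain the weak comparison and the other in contrapositive form with $f$ and $g$ interchanged to obtain the asymmetry. Your write-up merely makes explicit the quantifier bookkeeping that the paper's two-sentence proof leaves implicit.
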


\begin{lemma}
\label{lemma 2}For an event $B$, $f\sim _{A}fBh$ for each pair of acts $f$
and $h$, iff, for each event $C$ disjoint of $B$, $fCh\sim _{A}gCh$ for each
triple of acts $f$, $g$ and $h$.

\begin{proof}
Note that $fCh$ and $gCh$ are equal at $B$.
\end{proof}
\end{lemma}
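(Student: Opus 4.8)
The plan is to prove the two conditions of the biconditional equivalent to each other directly, in both directions exploiting the single structural fact that the disjointness $B\cap C=\varnothing$ forces the composed acts in the statement to coincide on $B$. Label the left-hand side (i), namely $f\sim_{A}fBh$ for every pair of acts $f,h$, and the right-hand side (ii), namely $fCh\sim_{A}gCh$ for every $C$ disjoint from $B$ and every triple $f,g,h$. Throughout I would use only that each $\succsim_{A}$ is a weak order, so that $\sim_{A}$ is an equivalence relation (in particular symmetric and transitive), together with the elementary rewriting rules for the composition notation $\cdot A\cdot$; no appeal to the axioms is required.

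For the direction from (ii) to (i) I would specialize the hypothesis to the maximal event disjoint from $B$, namely $C=S\setminus B$. Under the composition notation, an arbitrary act $f$ equals $fCf$ (it takes the value $f$ both on $C$ and on $S\setminus C=B$), while the target act $fBh$ equals $hCf$ (the value $h$ on $C=S\setminus B$ and the value $f$ on $B$). Reading hypothesis (ii) with its three quantified acts instantiated as $f$, $h$ and $f$ respectively therefore yields $fCf\sim_{A}hCf$, i.e. $f\sim_{A}fBh$; since $f$ and $h$ were arbitrary this is exactly (i), with no further work.

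For the direction from (i) to (ii) I would fix an arbitrary event $C$ disjoint from $B$ together with arbitrary acts $f,g,h$, and route both $fCh$ and $gCh$ through a common reference act. Because $C\cap B=\varnothing$, both $fCh$ and $gCh$ take the value $h$ on $B$. Applying (i) with the act $fCh$ in the role of the left-hand act and an arbitrary act $\psi$ in the role of the off-$B$ act gives $fCh\sim_{A}(fCh)B\psi=hB\psi$, and the identical computation gives $gCh\sim_{A}hB\psi$ for the \emph{same} $\psi$. Symmetry and transitivity of $\sim_{A}$ then deliver $fCh\sim_{A}gCh$, which is (ii).

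I do not anticipate a genuine obstacle: the content is entirely the bookkeeping observation that disjointness of $B$ and $C$ makes the relevant acts agree on $B$ (the author's one-line hint). The only two points requiring care are choosing $C=S\setminus B$ rather than a generic disjoint event in the first direction, since that is what makes a single application of (ii) land precisely on $fBh$, and using the same perturbing act $\psi$ in both chains in the second direction so that transitivity can be invoked.
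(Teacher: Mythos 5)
Your proof is correct and rests on exactly the observation the paper's one-line proof invokes, namely that disjointness of $B$ and $C$ forces $fCh$ and $gCh$ to coincide on $B$; your routing of both acts through the common act $hB\psi$ (or, equivalently, the single instantiation $fCh\sim_{A}(fCh)B(gCh)=gCh$) is just the explicit version of that hint. The only thing you add beyond the paper is spelling out the converse direction via $C=S\setminus B$, which the paper leaves implicit, so this is essentially the same argument, merely written out in full.
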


\begin{lemma}
\label{lemma 3}If $B\subseteq A$ is a null event at $A$, then, for each
event $C$ disjoint of $A\sm B$, $fCh\sim _{A}gCh$ for each triple of acts $f$, $%
g$ and $h$.

\begin{proof}
By $P1\frac{1}{2}$, $f\sim _{A\sm B}f\left( A\sm B\right) h$ for each pair of acts 
$f$ and $h$. Thus, the result follows from $\left( \ref{lemma 2}\right) $
and the definition of null event.
\end{proof}
\end{lemma}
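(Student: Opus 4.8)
The plan is to reduce the statement to the fact that the informed preference $\succsim_A$ is insensitive to what the acts do outside the ``active'' region $A\setminus B$, and then to package that insensitivity through Lemma \ref{lemma 2}. First I would record the basic consequence of $P1\frac{1}{2}$ that any indexed preference depends only on the values its acts take on its own indexing event. Concretely, for the event $A\setminus B$ and any acts $f,h$, the acts $f$ and $f(A\setminus B)h$ agree on $A\setminus B$; applying the composition ``$\cdot(A\setminus B)h'$'' to either of them produces the same act $f(A\setminus B)h'$, since that composition only reads values on $A\setminus B$. Hence the second implication of $P1\frac{1}{2}$ (from ``$fCh'\succsim_C gCh'$ for some $h'$'' to ``$f\succsim_C g$'', taken with $C=A\setminus B$) applies with reflexivity supplying the premise, and yields both $f\succsim_{A\setminus B} f(A\setminus B)h$ and $f(A\setminus B)h\succsim_{A\setminus B} f$. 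Thus $f\sim_{A\setminus B} f(A\setminus B)h$ for every pair of acts $f,h$.

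Next I would invoke the hypothesis. Since $B\subseteq A$ is null at $A$, by definition $\succsim_{A\setminus B}$ agrees with $\succsim_A$; in particular their indifference relations coincide, so the indifference just obtained transfers verbatim to $f\sim_A f(A\setminus B)h$ for all $f,h$.

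Finally I would apply Lemma \ref{lemma 2} with its distinguished event taken to be $A\setminus B$ (not the null event $B$ of the present statement). That lemma gives the equivalence between ``$f\sim_A f B_0 h$ for all $f,h$'' and ``$fCh\sim_A gCh$ for all $f,g,h$ whenever $C$ is disjoint from $B_0$''. With $B_0=A\setminus B$ the left-hand condition is exactly what the previous step established, so the right-hand condition delivers the conclusion: for every event $C$ disjoint from $A\setminus B$ and all acts $f,g,h$, one has $fCh\sim_A gCh$.

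The only genuine subtlety, and the step I would be most careful about, is the first one: it is tempting to try to read $f\sim_{A\setminus B} f(A\setminus B)h$ off the forward implication of $P1\frac{1}{2}$, but that implication only propagates a given comparison to a common tail $h'$ on both sides, so starting from reflexivity it yields nothing beyond reflexivity. The actual content comes from the reverse implication together with the observation that restricting to $A\setminus B$ collapses $f$ and $f(A\setminus B)h$ to one and the same act. The remaining point to watch is purely bookkeeping: one must identify the ``$B$'' of Lemma \ref{lemma 2} with $A\setminus B$ here, and confirm that ``$C$ disjoint from $A\setminus B$'' is exactly the disjointness hypothesis appearing in the present lemma.
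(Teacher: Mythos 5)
Your proposal is correct and follows essentially the same route as the paper: derive $f\sim_{A\setminus B}f(A\setminus B)h$ from $P1\frac{1}{2}$ (your observation that the reverse implication plus reflexivity is what does the work here is exactly right), transfer this to $\sim_A$ via the definition of null event, and conclude with Lemma \ref{lemma 2} applied to the event $A\setminus B$. The paper states these three steps in one line; you have merely filled in the details, and correctly.
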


In $\left( \ref{lemma 3}\right) $, if, for each event $C$ disjoint of $A\sm B$, 
$fCh\sim _{A}gCh$ for each triple of acts $f$, $g$ and $h$, then $\left( \ref%
{lemma 2}\right) $ implies that $f\sim _{A}f\left( A\sm B\right) h$ for each
pair of acts $f$ and $h$. However, $\succsim _{A\sm B}$ does not need to agree
with $\succsim _{A}$, so $B$ does not need to be a null event at $A$.
Nevertheless, $P1\frac{1}{2}$ to $P3\frac{1}{2}$, and $P5\frac{1}{2}$,
ensure that $B\subseteq A$ is a null event at $A$.

\begin{lemma}[order preserving \ - new version]
\label{lemma 4}If $B\subseteq A$ is a non null event at $A$, then, for each
pair of acts $f$ and $g$,%
\begin{equation*}
f\succsim _{B}g\text{ }\Leftrightarrow fBh\succsim _{A}gBh\text{ for each
act }h.
\end{equation*}

\begin{proof}
By $P1\frac{1}{2}$, (i) $f\succsim _{B}g$ iff $fBh\succsim _{B}gBh$ for each
act $h$, and (ii) $fBh\sim _{A\sm B}gBh$ for each act $h$.

By the first part of $P2\frac{1}{2}$, if $fBh\succsim _{B}gBh$ for each act $%
h$, then $fBh\succsim _{A}gBh$ for each act $h$. As $B$ is non null at $A,$
by the second part of $P2\frac{1}{2}$, if $fBh\succ _{B}gBh$ for each act $h$%
, then $fBh\succ _{A}gBh$ for each act $h$.
\end{proof}
\end{lemma}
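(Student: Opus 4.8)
The plan is to prove the two implications of the biconditional separately, using $P1\frac{1}{2}$ to pass between $\succsim_{B}$ and comparisons of the composite acts $fBh, gBh$, and using $P2\frac{1}{2}$ to transfer such comparisons from the pieces $B$ and $A\sm B$ up to $A$. The single preliminary observation I would record first is that $fBh$ and $gBh$ coincide on $A\sm B$, since $A\sm B\subseteq S\sm B$ and both acts equal $h$ there; hence they must be $\succsim_{A\sm B}$-indifferent. Concretely, overwriting both off $A\sm B$ by any common act produces literally the same act, so the second clause of $P1\frac{1}{2}$ (applied to the event $A\sm B$) delivers $fBh\sim_{A\sm B}gBh$ for every act $h$. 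This indifference on the complementary piece is what lets the sure-thing axiom operate.

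For the forward implication, I would assume $f\succsim_{B}g$. The first clause of $P1\frac{1}{2}$, applied to the event $B$, gives $fBh\succsim_{B}gBh$ for every act $h$. Combining this with the preliminary fact $fBh\succsim_{A\sm B}gBh$ and feeding both into the first (weak) clause of $P2\frac{1}{2}$ yields $fBh\succsim_{A}gBh$ for every $h$, which is the desired conclusion. Note that this direction uses neither non-nullity nor any strictness.

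For the reverse implication I would argue by contraposition, and this is exactly where the hypothesis that $B$ is non null at $A$ becomes essential. Suppose $fBh\succsim_{A}gBh$ for every $h$ but, toward a contradiction, that $f\not\succsim_{B}g$; since $\succsim_{B}$ is a weak order this forces $g\succ_{B}f$. The strict analogue of $P1\frac{1}{2}$ proved in the first lemma above then gives $gBh\succ_{B}fBh$ for every $h$, while the preliminary observation still gives $gBh\sim_{A\sm B}fBh$, hence in particular $gBh\succsim_{A\sm B}fBh$. Because $B$ is non null at $A$, the second (strict) clause of $P2\frac{1}{2}$ now upgrades these to $gBh\succ_{A}fBh$, contradicting $fBh\succsim_{A}gBh$. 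Therefore $f\succsim_{B}g$.

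The main obstacle, and the reason the lemma is not a mere restatement of $P2\frac{1}{2}$, is precisely this reverse implication: without non-nullity one cannot exclude that $B$ is irrelevant inside $A$, in which case (by Lemma \ref{lemma 3}) $fBh\sim_{A}gBh$ holds for all $f,g$ and carries no information about $\succsim_{B}$. Invoking the strict clause of $P2\frac{1}{2}$ under non-nullity is exactly what converts a strict $\succsim_{B}$-comparison into a strict $\succsim_{A}$-comparison and thereby blocks that degeneracy. The only remaining points needing care are the bookkeeping of the ``for each act $h$'' quantifiers (the weak clause is used for all $h$, whereas a single $h$ suffices to derive the contradiction) and the routine verification that acts agreeing on $A\sm B$ are $\succsim_{A\sm B}$-indifferent.
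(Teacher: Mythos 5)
Your proof is correct and follows essentially the same route as the paper's: the forward direction via the first clauses of $P1\frac{1}{2}$ and $P2\frac{1}{2}$ together with the observation that $fBh$ and $gBh$ agree on $A\sm B$, and the reverse direction via non-nullity and the strict clause of $P2\frac{1}{2}$. The only difference is that you spell out the contrapositive argument for the reverse implication, which the paper's proof leaves implicit after stating the strict version of the forward direction.
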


\begin{lemma}[order-preserving - old version]
For each $A,B\in \Sigma$ such that $B\subseteq A$, $B$ is non null at $%
A $, and acts $f$ and $g$, 
\begin{equation*}
f\succsim _{B}g\Leftrightarrow \text{ }fBh\succsim _{A}gBh\text{ for each
act }h.
\end{equation*}

\begin{proof}
By definition, $f\succsim _{B}g$ implies $fBh\succsim _{B}gBh$ for each act $%
h$. By sure-thing consistency, and $h\sim _{A\sm B}h$ for each act $h$, $%
fBh\succsim _{A}gBh$ for each act $h$. Analogously, by lemma above, $f\succ
_{B}g$ implies $fBh\succ _{B}gBh$ for each act $h$. By sure-thing
consistency, $B$ is non null at $A$, and $h\sim _{A\sm B}h$ for each act $h$, $%
fBh\succ _{A}gBh$ for each act $h$.
\end{proof}
\end{lemma}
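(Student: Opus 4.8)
The plan is to prove the two implications of the biconditional by splitting the forward direction into a weak case and a strict case, and then to recover the backward direction from completeness of the weak orders. The one hypothesis to keep track of is non-nullity of $B$ at $A$, which, as I explain at the end, is used asymmetrically.

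First I would record the auxiliary fact that acts coinciding on an event $C$ are indifferent under $\succsim_C$: if $p=q$ on $C$, then $pCh$ and $qCh$ are literally the same act, so $pCh\succsim_C qCh$ by reflexivity, and the second clause of $P1\frac{1}{2}$ (applied at $C$, for some $h$) yields $p\succsim_C q$; by symmetry $p\sim_C q$. Applying this with $C=A\sm B$ and the acts $fBh$ and $gBh$, which both equal $h$ on $A\sm B$ since $A\sm B$ is disjoint from $B$, I obtain $fBh\sim_{A\sm B}gBh$ for every act $h$. This is the ingredient that lets me feed the composite acts into the sure-thing axiom at the pair $B$, $A\sm B$.

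For the forward weak direction I would assume $f\succsim_B g$. The first clause of $P1\frac{1}{2}$ gives $fBh\succsim_B gBh$ for each $h$, and combined with $fBh\succsim_{A\sm B}gBh$, the consistency half of $P2\frac{1}{2}$ delivers $fBh\succsim_A gBh$. For the forward strict direction I would assume $f\succ_B g$; the strict analogue of $P1\frac{1}{2}$ (the first lemma of this appendix) gives $fBh\succ_B gBh$, and now the second clause of $P2\frac{1}{2}$, available precisely because $B$ is non null at $A$, upgrades this to $fBh\succ_A gBh$.

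Finally, the backward direction follows by contraposition using completeness: if $fBh\succsim_A gBh$ holds for every $h$ but $f\succsim_B g$ fails, then $g\succ_B f$ by completeness of $\succsim_B$, whence the strict forward step yields $gBh\succ_A fBh$, contradicting $fBh\succsim_A gBh$. I do not expect a serious obstacle here, since each step is a direct invocation of a single clause of $P1\frac{1}{2}$ or $P2\frac{1}{2}$. The only subtlety worth flagging is the asymmetric role of the hypothesis: the weak forward implication uses only the consistency part of $P2\frac{1}{2}$ and would hold with no nullity assumption at all, whereas non-nullity is needed exactly to transport strict preference through the second clause of $P2\frac{1}{2}$, and it is that strict transport which in turn powers the backward implication.
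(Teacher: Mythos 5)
Your proof is correct and follows essentially the same route as the paper's: push the preference through $P1\frac{1}{2}$ to the composed acts, note that $fBh$ and $gBh$ coincide (hence are indifferent) on $A\sm B$, and apply the two clauses of $P2\frac{1}{2}$, with non-nullity invoked only for the strict transport. You are merely more explicit than the paper about the indifference on $A\sm B$ and about recovering the converse from completeness, both of which the paper leaves implicit.
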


Next lemma says that, a Savage-like null event with relation to an indexed
preference, and contained in the indexing event, is a null event for the
indexed preference.

\begin{lemma}
\label{lemma 3 converse}If, for an event $B\subseteq A$, $fBh\sim _{A}gBh$,
for each triple of acts $f$, $g$ and $h$, then $B$ is a null event at $A$ (a
partial converse for $\left( \ref{lemma 3}\right) $).

\begin{proof}
If $A$ is the empty event, the result is trivial.

Assume that $A$ is a non empty event. By $P1\frac{1}{2}$, $fBh\sim _{A\sm B}gBh$%
, for each triple of acts $f$, $g$ and $h$. If $B$ is non null at $A$, then,
by the second part of $P2\frac{1}{2}$, $fBh\sim _{B}gBh$, for each triple of
acts $f$, $g$ and $h$. By $P1\frac{1}{2}$, $P3\frac{1}{2}$ and $P5\frac{1}{2}
$, $B$ is the empty event, a contradiction ($B$ is non null at $A$). Thus, $%
B $ is null at $A$.
\end{proof}
\end{lemma}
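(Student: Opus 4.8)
The plan is to argue by contradiction: I would assume $B$ is non null at $A$ and deduce that $B$ must be the empty event, which is absurd since the empty event is null at every event. If $A=\varnothing$ the only $B\subseteq A$ is $\varnothing$, which is null at $A$, so that case is immediate; hence I assume $A$ is non empty throughout, and I want to show that the hypothesis $fBh\sim_A gBh$ (for all triples $f,g,h$) is incompatible with $B$ being non null at $A$.

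First I would transfer the hypothesis from $\succsim_A$ to the fibre preference $\succsim_{A\setminus B}$. Since $B\subseteq A$, the composed acts $fBh$ and $gBh$ both coincide with $h$ on $A\setminus B$. By $P1\frac{1}{2}$ the values an act takes outside its indexing event are irrelevant for the corresponding indexed preference (concretely $p\sim_{C}pCq$ for all $p,q$, so two acts agreeing on $C$ are $\sim_C$-indifferent). Applying this with $C=A\setminus B$ gives $fBh\sim_{A\setminus B}gBh$ for every triple $f,g,h$.

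Next, under the contradiction hypothesis that $B$ is non null at $A$, I would upgrade the indifference to the fibre $\succsim_B$. If some triple gave $fBh\not\sim_B gBh$, then by completeness one direction is strict, say $fBh\succ_B gBh$; combined with $fBh\succsim_{A\setminus B}gBh$ from the previous step and the non-nullity of $B$ at $A$, the strict half of $P2\frac{1}{2}$ forces $fBh\succ_A gBh$, contradicting the lemma's hypothesis $fBh\sim_A gBh$. Hence $fBh\sim_B gBh$ for all $f,g,h$. Using $P1\frac{1}{2}$ once more on $\succsim_B$ (now $fBh\sim_B f$ and $gBh\sim_B g$), this collapses to $f\sim_B g$ for all acts $f,g$, in particular for all constant acts. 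But $P5\frac{1}{2}$ supplies constants with $f\succ_S g$, and $P3\frac{1}{2}$ transports this strict ranking to $\succsim_B$ whenever $B$ is non empty, yielding $f\succ_B g$ and contradicting $f\sim_B g$. Therefore $B=\varnothing$, contradicting non-nullity, so $B$ is null at $A$.

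I expect the main obstacle to be the middle step: one cannot pass directly from indifference under $\succsim_A$ to indifference under $\succsim_B$, because that passage relies on the strict part of $P2\frac{1}{2}$, which is available only once $B$ is assumed non null at $A$. This is exactly why the statement is only a \emph{partial} converse to Lemma \ref{lemma 3}, and why the non-degeneracy axioms $P3\frac{1}{2}$ and $P5\frac{1}{2}$ are indispensable: without them $\succsim_B$ could be degenerate on a non empty $B$ and the final contradiction would evaporate.
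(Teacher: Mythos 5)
Your proof is correct and follows essentially the same route as the paper's: reduce to $A\neq\varnothing$, use $P1\frac{1}{2}$ to get indifference on $A\setminus B$, invoke the strict part of $P2\frac{1}{2}$ under the contradiction hypothesis to collapse $\succsim_B$ to total indifference, and then contradict non-degeneracy via $P3\frac{1}{2}$ and $P5\frac{1}{2}$. Your write-up merely spells out the contrapositive use of $P2\frac{1}{2}$ and the role of constant acts more explicitly than the paper does.
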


\begin{theorem}[Nullity]
\label{nullity}For each triple of events $A,B$ and $C$ such that $C\subseteq
B\subseteq A$, 
\begin{equation*}
B\text{ is null at }A\Longrightarrow C\text{ is null at }A,
\end{equation*}%
\begin{equation*}
C\text{ and }B\sm C\text{ are null at }A\Longrightarrow B\text{ is null at }A,
\end{equation*}%
and%
\begin{equation*}
C\text{ is null at }B\Longrightarrow C\text{ is null at }A.
\end{equation*}

\begin{proof}
By $\left( \ref{lemma 3}\right) $ and $\left( \ref{lemma 3 converse}\right) $%
, $C$ is null at $A$ if $B$ is null at $A$.

If, $fCh\sim _{A}gCh$ and $f\left( B\sm C\right) h\sim _{A}g\left( B\sm C\right) h$%
, for each triple of acts $f$, $g$ and $h$, then $fCf\left( B\sm C\right) h\sim
_{A}gCf\left( B\sm C\right) h$ and $f\left( B\sm C\right) gCh\sim _{A}g\left(
B\sm C\right) gCh$, for each triple of acts $f$, $g$ and $h$. By transitivity, $%
fBh\sim _{A}gBh$, for each triple of acts $f$, $g$ and $h$. So, by $\left( %
\ref{lemma 3}\right) $ and $\left( \ref{lemma 3 converse}\right) $, $B$ is
null at $A$ if $C$ and $B\sm C$ are null at $A$.

If $B$ is null at $A$, the third case above is a particular case of the
first case above. So, assuming that $B$ is non null at $A$, if $fCh\sim
_{B}gCh$ for each triple of acts $f$, $g$ and $h$, then, by $\left( \ref%
{lemma 4}\right) $, $fCh\sim _{A}gCh$ for each triple of acts $f$, $g$ and $%
h $. So, by $\left( \ref{lemma 3}\right) $ and $\left( \ref{lemma 3 converse}%
\right) $, $C$ is null at $A$ if $C$ is null at $B$.
\end{proof}
\end{theorem}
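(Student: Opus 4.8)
The unifying tool I would isolate first is the nullity characterization implicit in the two preceding lemmas. Combining Lemma \ref{lemma 3} and Lemma \ref{lemma 3 converse} shows that, for $B\subseteq A$, the event $B$ is null at $A$ if and only if $fBh\sim _A gBh$ for every triple of acts $f,g,h$: the forward direction is Lemma \ref{lemma 3} applied to the event $B$ itself, which is disjoint from $A\sm B$, and the converse is exactly Lemma \ref{lemma 3 converse}. The plan is then to reduce each of the three implications to verifying this indifference statement for the appropriate event, and to read off nullity from Lemma \ref{lemma 3 converse}.

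For the first implication I would observe that, since $C\subseteq B$, the event $C$ is disjoint from $A\sm B$. Hence, if $B$ is null at $A$, Lemma \ref{lemma 3} applies directly to $C$ and yields $fCh\sim _A gCh$ for all acts $f,g,h$; Lemma \ref{lemma 3 converse} then gives that $C$ is null at $A$. No chaining is needed here.

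For the second implication, nullity of $C$ and of $B\sm C$ at $A$ gives, through the characterization, $fCh\sim _A gCh$ and $f(B\sm C)h\sim _A g(B\sm C)h$ for all acts. I would pass from $fBh$ to $gBh$ through the intermediate act that agrees with $g$ on $C$, with $f$ on $B\sm C$, and with $h$ off $B$: altering the $C$-coordinate is an instance of nullity of $C$, altering the $B\sm C$-coordinate is an instance of nullity of $B\sm C$, and transitivity of $\sim _A$ chains the two indifferences into $fBh\sim _A gBh$, whence $B$ is null at $A$ by Lemma \ref{lemma 3 converse}. The only care required is to write each one-coordinate substitution in the composed-act notation so that it is literally an instance of the characterization.

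The third implication is the delicate one and is where I expect the real obstacle. Nullity of $C$ at $B$ is an indifference statement for $\succsim _B$, whereas the conclusion concerns $\succsim _A$, so the two indexing events must be bridged, and the natural bridge is the order-preserving Lemma \ref{lemma 4}, which presupposes that $B$ is non null at $A$. I would therefore split into cases. If $B$ is null at $A$, then since $C\subseteq B$ the first implication already gives that $C$ is null at $A$. If instead $B$ is non null at $A$, then from $fCh\sim _B gCh$ Lemma \ref{lemma 4} yields $(fCh)Bh'\sim _A (gCh)Bh'$ for every act $h'$; choosing $h'=h$ collapses the composed acts back to $fCh$ and $gCh$ (because $C\subseteq B$ forces the $B\sm C$- and off-$B$ coordinates to agree with $h$), giving $fCh\sim _A gCh$ for all $f,g,h$ and hence $C$ null at $A$ by Lemma \ref{lemma 3 converse}. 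The subtlety to flag is precisely this reliance on Lemma \ref{lemma 4}, and thus on the non-nullity hypothesis, which is what forces the case distinction.
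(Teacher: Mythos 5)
Your proposal is correct and follows essentially the same route as the paper's proof: the first implication via Lemma \ref{lemma 3} applied to $C$ (disjoint from $A\sm B$) plus Lemma \ref{lemma 3 converse}, the second via the intermediate act and transitivity of $\sim_A$, and the third via the case split on whether $B$ is null at $A$ and the order-preserving Lemma \ref{lemma 4} in the non-null case. Your explicit collapse of $(fCh)Bh$ back to $fCh$ just makes precise a step the paper leaves implicit.
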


\begin{definition}[$\geq _{A}$]
For each triple of events $A,B$ and $C$ such that $B,C\subseteq A$, and
constant acts $f$ and $g$ such that $f\succsim _{S}g$, $B$\ is at least as
probable as $C$\ at $A$, and denoted by $B\geq _{A}C$ (for the definitions
of $>_{A}$ and $=_{A}$ see $\left( \ref{StrictandIndifference}\right) $),
when 
\begin{equation*}
fBg\succsim _{A}fCg.
\end{equation*}
\end{definition}

\begin{lemma}
If $B,C\subseteq D$ are null at $D$, then $B=_{D}C$.

\begin{proof}
By nullity, $B\cup C$ is null at $D$, consequently, $\succsim _{D\sm \left(
B\cup C\right) }$ and $\succsim _{D}$ agree. I.e., prizes at $B$ or $C$ are
negligible.
\end{proof}
\end{lemma}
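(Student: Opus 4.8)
The plan is to unpack the target first. By the definition of $\geq_{D}$ together with Notation \ref{StrictandIndifference}, the assertion $B=_{D}C$ means precisely that $fBg\sim_{D}fCg$, where $f$ and $g$ are the fixed constant acts with $f\succsim_{S}g$ used to define the relation. Hence the whole task reduces to establishing this one indifference under $\succsim_{D}$.

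The engine will be Lemma \ref{lemma 3}. First I would observe that since $B\subseteq D$ is null at $D$ and $B$ is (trivially) disjoint from $D\setminus B$, taking the relevant event in Lemma \ref{lemma 3} to be $B$ itself yields $pBh\sim_{D}qBh$ for every triple of acts $p,q,h$; that is, whatever is placed on $B$ is irrelevant to $\succsim_{D}$. Applying this with $p=f$, $q=g$ and $h=g$ (all constant) gives $fBg\sim_{D}gBg$, and since $gBg$ is simply the constant act $g$, I obtain $fBg\sim_{D}g$.

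Next I would repeat the argument verbatim with $C$ in place of $B$: using that $C$ is null at $D$ and disjoint from $D\setminus C$, Lemma \ref{lemma 3} gives $fCg\sim_{D}gCg=g$, so $fCg\sim_{D}g$. Because $\succsim_{D}$ is a weak order, $\sim_{D}$ is transitive, so chaining $fBg\sim_{D}g\sim_{D}fCg$ delivers $fBg\sim_{D}fCg$, which is exactly $B=_{D}C$.

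I expect no genuine obstacle here; the only points requiring care are (i) verifying the disjointness hypothesis of Lemma \ref{lemma 3}, which is immediate from $B\cap(D\setminus B)=\varnothing$, and (ii) the bookkeeping that both $gBg$ and $gCg$ collapse to the single constant act $g$, so that the two indifference chains terminate at a common point. An alternative, slightly longer route would instead invoke Theorem \ref{nullity}: since $B$ and $C$ are null at $D$, so is $B\setminus C$ (a subevent of the null event $B$), whence $B\cup C=C\cup(B\setminus C)$ is null at $D$ by the union clause; as $fBg$ and $fCg$ agree off $B\cup C$ (both equal $g$ there), a single application of Lemma \ref{lemma 3} to $B\cup C$ would then yield the indifference in one step. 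The direct two-step argument above is cleaner, so I would present that and mention the nullity route only as a remark.
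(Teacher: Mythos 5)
Your proof is correct, but it takes a slightly different route from the paper's. The paper argues in one step at the level of the union: by Theorem \ref{nullity}, $B\cup C$ is null at $D$, so $\succsim_{D\setminus(B\cup C)}$ agrees with $\succsim_{D}$, and since $fBg$ and $fCg$ coincide on $D\setminus(B\cup C)$ (both equal $g$ there), Lemma \ref{lemma 3} applied to the single null event $B\cup C$ gives $fBg\sim_{D}fCg$ directly --- this is exactly the ``alternative route'' you relegate to a closing remark. Your primary argument instead applies Lemma \ref{lemma 3} separately to $B$ and to $C$, collapses $gBg$ and $gCg$ to the constant act $g$, and chains the two indifferences by transitivity of $\sim_{D}$. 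What your version buys is economy of hypotheses: it never invokes the union-closure clause of the Nullity theorem, only the bare definition of nullity plus the fact that $\succsim_{D}$ is a weak order, and it is fully explicit where the paper's proof leaves the final comparison of $fBg$ with $fCg$ implicit in the phrase ``prizes at $B$ or $C$ are negligible.'' What the paper's version buys is a single application of Lemma \ref{lemma 3} and a formulation that generalizes immediately to acts that differ arbitrarily on $B\cup C$ rather than only to the two bets $fBg$ and $fCg$. Both are sound; your unpacking of $B=_{D}C$ as $fBg\sim_{D}fCg$ via Notation \ref{StrictandIndifference} is the right reading of the definition of $\geq_{D}$.
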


\begin{lemma}
If $B\subseteq D$ is null at $D$, then $B=_{D}\emptyset $.

\begin{proof}
By definition, $\emptyset $ is null at $D$, thus, by lemma above, $%
B=_{D}\emptyset $.
\end{proof}
\end{lemma}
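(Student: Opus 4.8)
The plan is to obtain this as an immediate consequence of the lemma stated just above. The key observation is that the empty event is null at \emph{every} event, as noted in the text right after the definition of a null event; in particular $\emptyset \subseteq D$ is null at $D$. Thus both $B$ and $\emptyset$ are null subevents of $D$, and I can invoke the preceding lemma with the pair $(B,\emptyset)$ in place of $(B,C)$ to conclude directly that $B =_D \emptyset$. The only bookkeeping point is that the definition of $=_D$ requires both compared events to be subevents of $D$, which holds trivially for $B$ and for $\emptyset$, so the hypotheses of the previous lemma are met verbatim.

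For completeness, it is worth checking that the definitions unwind correctly, since this confirms there is nothing hidden to prove. Recall that $B =_D \emptyset$ abbreviates $B \geq_D \emptyset$ together with $\emptyset \geq_D B$, i.e.\ $fB g \sim_D f\emptyset g$ for the constant acts $f \succsim_S g$ appearing in the definition of $\geq_D$. Since $f\emptyset g = g$, this is just $fBg \sim_D g$. If one prefers a self-contained derivation rather than quoting the previous lemma, this can be read off from Lemma 3 directly: because $B$ is null at $D$ and $B$ is disjoint from $D \sm B$, Lemma 3 yields $fBh \sim_D gBh$ for all acts $f,g,h$, and specializing $h=g$ constant gives $fBg \sim_D gBg = g$.

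Since both routes are one-step, I do not anticipate any genuine obstacle; the result is essentially a corollary packaging the previous lemma against the standing convention that $\emptyset$ is null at every event. The only thing to state carefully is that $=_D$ is the symmetric part of the qualitative-probability relation $\geq_D$ (via Notation \ref{StrictandIndifference}), so that establishing the two comparisons $B \geq_D \emptyset$ and $\emptyset \geq_D B$ is exactly what is required.
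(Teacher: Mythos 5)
Your proposal is correct and follows exactly the paper's route: the empty event is null at $D$ by definition, so the preceding lemma applied to the pair $(B,\emptyset)$ gives $B=_{D}\emptyset$ immediately. The extra unwinding via Lemma \ref{lemma 3} is a harmless redundancy.
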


\begin{lemma}
If $A\subseteq B\subseteq C$ then $B\geq _{C}A$. Besides, $B>_{C}A$ iff $B\sm A$
is not null at $C$.

\begin{proof}
First, observe that for each pair of acts $f$ and $g$, by the definition of $%
\succsim _{C\sm B}$ and $\succsim _{A}$, $fBg\sim _{C\sm B}fAg$ and $fBg\sim
_{A}fAg$.

Given constant acts $f$ and $g$ such that $f\succsim _{S}g$, by eventwise
monotonicity or by the definition of $\succsim _{\emptyset }$, $f\succsim
_{B\sm A}g$. By nondegeneracy, $f$ and $g$ can satisfy $f\succ _{S}g$, thus, if 
$B\sm A$ is non null at $C$,then $f\succ _{B\sm A}g$ and, by nullity, $B$ is non
null at $C$ and $B\sm A$ is non null at $B$. If $B\sm A$ is null at $C$, then $%
\succsim _{\left( C\sm B\right) \sqcup A}$ agree with $\succsim _{C}$,
consequently, $fBg\sim _{C}fAg$; otherwise, by the discussion above and the
sure-thing consistency, $fBg\succ _{C}fAg$.
\end{proof}
\end{lemma}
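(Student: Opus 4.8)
The plan is to unfold the definition of $\geq _{C}$ and compare the two composed acts block by block along the partition $C=\left( C\sm B\right) \sqcup \left( B\sm A\right) \sqcup A$, then reassemble the blockwise comparisons with the sure thing principle $P2\frac{1}{2}$. I would fix constant acts with $f\succsim _{S}g$; by $P4\frac{1}{2}$ the comparison $fBg\succsim _{C}fAg$ does not depend on the chosen pair, and by $P5\frac{1}{2}$ I may insist on $f\succ _{S}g$ when I want to detect strictness. By definition $B\geq _{C}A$ is exactly $fBg\succsim _{C}fAg$; once that is shown to hold unconditionally, $A\not\geq _{C}B$ is equivalent to $fBg\succ _{C}fAg$, so the second assertion reduces to characterizing when $fBg\succ _{C}fAg$.

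First I would record the blockwise comparisons. Since $A\subseteq B\subseteq C$, the acts $fBg$ and $fAg$ agree on $A$ (both equal $f$) and agree on $C\sm B$ (both equal $g$); because an indexed preference depends only on the values of the acts on its indexing event (a consequence of $P1\frac{1}{2}$), this yields $fBg\sim _{A}fAg$ and $fBg\sim _{C\sm B}fAg$. On the middle block $B\sm A$ the two acts reduce to the constants $f$ and $g$, so eventwise monotonicity $P3\frac{1}{2}$ gives $fBg\succsim _{B\sm A}fAg$ (and if $B\sm A=\varnothing $ this holds degenerately via $\succsim _{\varnothing }$), with the strict form $fBg\succ _{B\sm A}fAg$ whenever $B\sm A$ is non empty and $f\succ _{S}g$.

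Finally I would assemble along the nested bipartitions $B=A\sqcup \left( B\sm A\right) $ and $C=B\sqcup \left( C\sm B\right) $. The weak (first) clause of $P2\frac{1}{2}$ combines $fBg\sim _{A}fAg$ with $fBg\succsim _{B\sm A}fAg$ to give $fBg\succsim _{B}fAg$, and then combines this with $fBg\sim _{C\sm B}fAg$ to give $fBg\succsim _{C}fAg$, proving $B\geq _{C}A$. The hard part will be the strict clause, which needs a non-nullity hypothesis at the larger event of each bipartition, so I must propagate the assumed non-nullity down to the intermediate event $B$. Assuming $B\sm A$ is not null at $C$, it is in particular non empty, so $fBg\succ _{B\sm A}fAg$; applying the Nullity theorem along $B\sm A\subseteq B\subseteq C$, the contrapositives of its third and first implications give that $B\sm A$ is non null at $B$ and that $B$ is non null at $C$. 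These are exactly the hypotheses that let the strict clause of $P2\frac{1}{2}$ fire at each stage, upgrading $fBg\succsim _{B}fAg$ to $fBg\succ _{B}fAg$ and then to $fBg\succ _{C}fAg$, i.e. $B>_{C}A$. For the converse, if $B\sm A$ is null at $C$ then by the definition of null event $\succsim _{C}$ agrees with $\succsim _{\left( C\sm B\right) \sqcup A}$; since $fBg$ and $fAg$ coincide on $\left( C\sm B\right) \sqcup A$ they are indifferent there, hence $fBg\sim _{C}fAg$ and strictness fails, yielding the stated equivalence.
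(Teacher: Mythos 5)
Your proof is correct and follows essentially the same route as the paper's: the blockwise indifferences on $A$ and $C\sm B$, eventwise monotonicity on $B\sm A$, the Nullity theorem to push non-nullity of $B\sm A$ down to $B$ and up to $C$, assembly via $P2\frac{1}{2}$, and the agreement of $\succsim _{C}$ with $\succsim _{\left( C\sm B\right) \sqcup A}$ in the null case. Your version is merely more explicit (e.g.\ invoking $P4\frac{1}{2}$ for well-definedness of $\geq _{C}$ and staging the sure-thing step through the intermediate event $B$), which is fine.
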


\begin{lemma}
If $A,B\subseteq C$, $A$ is null at $C$ and $B$ is non null at $C$, then $%
A\cup B=_{C}B$.

\begin{proof}
By nullity, as $B$ is non null at $C$, then $A\cup B$ is non null at $C$.
Now, take $B\subseteq A\cup B\subseteq C$ and use the lemma above.
\end{proof}
\end{lemma}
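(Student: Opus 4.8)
The plan is to deduce the statement from the preceding lemma on nested events (\emph{if $A\subseteq B\subseteq C$ then $B\geq_C A$, with the comparison strict iff $B\setminus A$ is non null at $C$}) applied to the chain $B\subseteq A\cup B\subseteq C$, using Theorem~\ref{nullity} to control the residual event $(A\cup B)\setminus B$.

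First I would extract two facts from the Nullity theorem. Because $B$ is non null at $C$ and $B\subseteq A\cup B\subseteq C$, the contrapositive of the first clause of Theorem~\ref{nullity} (subevents of a null event are null) shows that $A\cup B$ is itself non null at $C$; this guarantees that we are comparing two genuinely non-negligible events. More to the point, since $A$ is null at $C$ and $(A\cup B)\setminus B=A\setminus B\subseteq A\subseteq C$, that same first clause of Theorem~\ref{nullity} forces $(A\cup B)\setminus B$ to be null at $C$.

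Next I would invoke the nested-events lemma on the chain $B\subseteq A\cup B\subseteq C$. Its first part gives $A\cup B\geq_C B$ immediately. Its strictness part asserts that $A\cup B>_C B$ holds precisely when $(A\cup B)\setminus B$ is non null at $C$; since the previous paragraph shows that event to be null at $C$, the strict comparison $A\cup B>_C B$ fails.

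Finally I would unwind Notation~\ref{StrictandIndifference}: the statement $A\cup B>_C B$ abbreviates ``$A\cup B\geq_C B$ and not $B\geq_C A\cup B$''. Having $A\cup B\geq_C B$ while $A\cup B>_C B$ fails forces $B\geq_C A\cup B$, and the two weak comparisons together yield $A\cup B=_C B$. I do not anticipate a genuine obstacle here: the argument is short, and the only step requiring attention is recognizing that the residual $(A\cup B)\setminus B$ sits inside the null event $A$, so that it is exactly the strictness clause of the nested-events lemma—rather than any fresh computation with acts—that upgrades the automatic inequality $A\cup B\geq_C B$ into the desired indifference.
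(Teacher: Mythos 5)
Your proposal is correct and follows essentially the same route as the paper: establish that $A\cup B$ is non null at $C$ via the Nullity theorem, then apply the nested-events lemma to the chain $B\subseteq A\cup B\subseteq C$, with the strictness clause failing because the residual $(A\cup B)\setminus B\subseteq A$ is null at $C$. The paper's proof is just a terser statement of exactly this argument.
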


\begin{lemma}
If $C\subseteq D$ and $C$ is non null at $D$, then $C>_{D}\emptyset $.

\begin{proof}
Given constant acts $f$ and $g$ such that $f\succ _{S}g$ (nondegeneracy), by
eventwise monotonicity, $f\succ _{C}g$, consequently, by order-preserving
lemma, 
\begin{equation*}
fCg\succ _{D}gCg=g=f\emptyset g.
\end{equation*}
\end{proof}
\end{lemma}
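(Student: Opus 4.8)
The plan is to unwind the definitions and reduce the claim to a single strict comparison of constant acts that is transported from the event $C$ up to the event $D$. Unpacking the definition of $\geq _{D}$ together with Notation \ref{StrictandIndifference}, the goal $C>_{D}\emptyset $ amounts to exhibiting constant acts with $f\succsim _{S}g$ for which $fCg\succ _{D}f\emptyset g$. The first observation I would record is that both $f\emptyset g$ and $gCg$ are literally the constant act $g$ on all of $S$ (the empty event carries no prize, and $gCg$ agrees with $g$ everywhere), so the target simplifies to establishing $fCg\succ _{D}gCg$.

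To produce this strict preference, I would first invoke nondegeneracy ($P5\frac{1}{2}$) to fix constant acts $f\succ _{S}g$. Since $C$ is non null at $D$, in particular $C$ is non empty (recall the empty event is null at every event), so eventwise monotonicity ($P3\frac{1}{2}$) applies and upgrades $f\succ _{S}g$ to $f\succ _{C}g$ (the biconditional in $P3\frac{1}{2}$ passes to the strict parts by negating both sides).

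The final step is to push the strict comparison $f\succ _{C}g$ from the conditional preference $\succsim _{C}$ up to $\succsim _{D}$. This is precisely the content of the order-preserving lemma (Lemma \ref{lemma 4}): because $C\subseteq D$ is non null at $D$, its strict form gives $fCh\succ _{D}gCh$ for every act $h$, and choosing $h=g$ yields $fCg\succ _{D}gCg$. Combined with the identifications $gCg=g=f\emptyset g$ from the previous paragraph, this is exactly $C>_{D}\emptyset $.

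The step I expect to carry the real weight is this order-preserving step, and specifically its strict version. The non-nullity of $C$ at $D$ is indispensable here: it is what licenses the second (strict) clause of the sure-thing principle $P2\frac{1}{2}$ to fire inside the proof of Lemma \ref{lemma 4}. Were $C$ instead null at $D$, the preference $\succsim _{D}$ would ignore the prizes on $C$ and the strict comparison would collapse to an indifference, so this is the one hypothesis the argument genuinely cannot dispense with; everything else is routine bookkeeping with the definitions of $\geq _{D}$, $f\emptyset g$, and $gCg$.
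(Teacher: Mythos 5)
Your argument is correct and follows exactly the paper's own proof: nondegeneracy supplies $f\succ _{S}g$, eventwise monotonicity upgrades this to $f\succ _{C}g$, and the strict clause of the order-preserving lemma (valid because $C$ is non null at $D$) yields $fCg\succ _{D}gCg=g=f\emptyset g$. The only addition is your explicit note that non-nullity forces $C\neq \emptyset $ so that $P3\tfrac{1}{2}$ applies, which is a detail the paper leaves implicit.
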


\begin{lemma}
If $B,C\subseteq D$, $B$ is null at $D$ and $C$ is non null at $D$, then $%
C>_{D}B$.

\begin{proof}
Both lemmas above imply that, given constant acts $f$ and $g$ such that $%
f\succ _{S}g$ (nondegeneracy), 
\begin{equation*}
fCg\succ _{D}f\emptyset g\sim _{D}fBg.
\end{equation*}
\end{proof}
\end{lemma}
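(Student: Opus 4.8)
The plan is to reduce the claim to a single strict comparison of constant-act composites and then chain the two immediately preceding lemmas using transitivity of the weak order $\succsim_D$. First I would invoke nondegeneracy ($P5\frac{1}{2}$) to fix a pair of constant acts $f$ and $g$ with $f\succ_S g$; by the definition of $>_D$ it then suffices to establish the strict comparison $fCg\succ_D fBg$ for this pair, since $>_D$ is precisely the strict version of the ``at least as probable as at $D$'' relation built from such constant acts.

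Next I would treat the two events separately. Because $C$ is non null at $D$, the penultimate lemma (``$C\subseteq D$ non null at $D$ implies $C>_D\emptyset$'', itself proved from eventwise monotonicity $P3\frac{1}{2}$ and the order-preserving lemma) yields $fCg\succ_D f\emptyset g$. Because $B$ is null at $D$, the lemma ``$B\subseteq D$ null at $D$ implies $B=_D\emptyset$'' yields $fBg\sim_D f\emptyset g$, equivalently $f\emptyset g\sim_D fBg$.

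Finally I would combine the two relations through the null event $\emptyset$: from $fCg\succ_D f\emptyset g$ together with $f\emptyset g\sim_D fBg$, transitivity of the weak order $\succsim_D$ gives $fCg\succ_D fBg$, which is exactly the assertion $C>_D B$. The only point requiring care is that both auxiliary lemmas must be applied to the \emph{same} pair of constant acts $f,g$; this is legitimate because each is stated for an arbitrary pair of constant acts with $f\succsim_S g$, and nondegeneracy guarantees we may take a pair with the strict inequality $f\succ_S g$, so no conflict of choices arises. Beyond this bookkeeping the argument is essentially immediate, and I do not anticipate any genuine obstacle.
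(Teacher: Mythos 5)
Your proposal is correct and follows essentially the same route as the paper: both arguments fix constant acts $f\succ_S g$ via nondegeneracy, apply the lemma $C>_D\emptyset$ (from non-nullity of $C$) and the lemma $B=_D\emptyset$ (from nullity of $B$), and conclude $fCg\succ_D f\emptyset g\sim_D fBg$ by transitivity. No discrepancies to report.
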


\begin{lemma}
$C\neq \emptyset $ is non null at $C$.

\begin{proof}
By nondegeneracy and eventwise monotonicity, $\succsim _{\emptyset }$ and $%
\succsim _{C}$ do not agree.
\end{proof}
\end{lemma}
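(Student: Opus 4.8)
The plan is to unfold the definition of nullity and reduce the claim to a non-agreement between two indexed preferences. By definition, $C$ is null at $C$ precisely when $\succsim_{C\sm C}$ agrees with $\succsim_C$; since $C\sm C=\emptyset$, this is the statement that $\succsim_\emptyset$ agrees with $\succsim_C$. Thus to prove that $C\neq\emptyset$ is \emph{non} null at $C$ it suffices to exhibit a single pair of acts that $\succsim_\emptyset$ and $\succsim_C$ order differently.

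The key asymmetry I would exploit is that $\succsim_\emptyset$ carries no strict preferences at all, whereas $\succsim_C$ must carry at least one. For the first point, recall (as already noted after the null-event definition, and as follows from $P1\frac12$ applied with $A=\emptyset$, where $f\emptyset h=g\emptyset h=h$) that $\succsim_\emptyset$ is degenerate: every act is weakly preferred to every act, so in particular $f\sim_\emptyset g$ for all acts $f,g$, and $\succsim_\emptyset$ never ranks one act strictly above another.

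For the second point, I would produce a strict preference in $\succsim_C$ out of the nondegeneracy of $\succsim_S$. By $P5\frac12$ there are constant acts $f$ and $g$ with $f\succ_S g$. Applying eventwise monotonicity $P3\frac12$ at the nonempty event $C$ gives $f\succsim_C g\Leftrightarrow f\succsim_S g$ and $g\succsim_C f\Leftrightarrow g\succsim_S f$; since $f\succ_S g$ means $f\succsim_S g$ together with not $g\succsim_S f$, the two equivalences transfer this to $f\succsim_C g$ and not $g\succsim_C f$, i.e. $f\succ_C g$. Combining the two points, the pair $(f,g)$ satisfies $f\succ_C g$ but $f\sim_\emptyset g$, so $\succsim_C$ and $\succsim_\emptyset$ disagree, and therefore $C$ is non null at $C$.

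The argument is short and presents no real obstacle; the only point requiring care is that $P3\frac12$ is stated for the weak relation, so the transfer of \emph{strictness} to $\succsim_C$ must be obtained by applying the equivalence in both directions (to $f\succsim_S g$ and to the negation of $g\succsim_S f$) rather than by assuming eventwise monotonicity preserves strict preference directly.
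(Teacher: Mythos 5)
Your proposal is correct and follows the paper's own (very terse) argument exactly: unfold the definition of nullity to reduce the claim to non-agreement of $\succsim_\emptyset$ with $\succsim_C$, note that $\succsim_\emptyset$ is degenerate, and use $P5\frac{1}{2}$ together with $P3\frac{1}{2}$ to produce a strict ranking of two constant acts under $\succsim_C$. Your care in transferring strictness through the biconditional of $P3\frac{1}{2}$ in both directions is a welcome elaboration of a step the paper leaves implicit.
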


\begin{lemma}
If $A\subseteq C\neq \emptyset $ is null at $C$ then $C\sm A$ is non null at $C$%
.

\begin{proof}
By nullity and lemma above.
\end{proof}
\end{lemma}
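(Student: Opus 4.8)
The plan is to argue by contradiction, leveraging the additivity of null events furnished by the Nullity theorem together with the preceding lemma that a nonempty event is non null at itself. So suppose, toward a contradiction, that $C\sm A$ is null at $C$. Since we are given that $A$ is null at $C$, this would make \emph{both} blocks of the disjoint decomposition $C = A \sqcup (C\sm A)$ null at $C$.

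I would then invoke the second assertion of the Nullity theorem. In the notation of that theorem (stated for a chain $C \subseteq B \subseteq A$), I instantiate its ambient event at $C$, its middle event at $C$, and its smallest event at $A$; the relevant chain condition $A \subseteq C \subseteq C$ is satisfied because $A \subseteq C$ by hypothesis. The theorem's set difference then becomes exactly $C\sm A$, and its additivity clause reads: if $A$ is null at $C$ and $C\sm A$ is null at $C$, then $C$ is null at $C$. Both antecedents hold --- the first by the hypothesis of the present lemma, the second by our contradiction assumption --- so we conclude that $C$ is null at $C$.

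This contradicts the immediately preceding lemma, which asserts that any $C \neq \emptyset$ is non null at $C$. The contradiction shows that $C\sm A$ cannot be null at $C$; hence it is non null at $C$, as claimed.

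The deduction is short, so there is no genuine obstacle; the only care needed is the bookkeeping when applying the Nullity theorem --- one must check that the chosen triple of events respects the chain condition $A \subseteq C \subseteq C$ so that the additivity clause legitimately applies, and one must keep the two invocations of ``null at $C$'' (the standing hypothesis on $A$ versus the contradiction assumption on $C\sm A$) straight when matching them to the two antecedents of that clause.
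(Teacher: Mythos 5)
Your proof is correct and is exactly the argument the paper compresses into ``By nullity and lemma above'': the additivity clause of the Nullity theorem (instantiated on the chain $A\subseteq C\subseteq C$) shows that if both $A$ and $C\sm A$ were null at $C$ then $C$ would be null at $C$, contradicting the preceding lemma that a nonempty event is non null at itself. No issues.
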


\begin{lemma}
If $A\subseteq B\subseteq C$, $B$ is non null at $C$ and $A$ is null at $C$,
then $A$ is null at $B$.

\begin{proof}
By contradiction, assume that $A$ is non null at $B$. By nondegeneracy and
eventwise monotonicity, there exist constant acts $f$ and $g$ such that $%
f\succ _{A}g$. By order-preserving lemma, as $A$ is non null at $B$, $%
fAh\succ _{B}gAh$ for each act $h$. Again, by order-preserving lemma, as $B$
is non null at $C$, $fAh\succ _{C}gAh$ for each act $h$. Thus, $A$ is non
null at $C$, an absurd.
\end{proof}
\end{lemma}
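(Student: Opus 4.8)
The plan is to argue by contradiction and to propagate a strict preference from the level $B$ up to the level $C$ by applying the order-preserving lemma (Lemma \ref{lemma 4}) twice, so that the resulting strict preference at $C$ clashes with the hypothesis that $A$ is null at $C$.

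First I would assume, toward a contradiction, that $A$ is non null at $B$. Since the empty event is null at every event, this already forces $A \neq \emptyset$. By nondegeneracy ($P5\frac{1}{2}$) I can fix constant acts $f \succ_S g$, and by eventwise monotonicity ($P3\frac{1}{2}$), as $A$ is non-empty, I obtain $f \succ_A g$.

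Next I would feed this strict preference through the order-preserving lemma. Applied with inner event $A$ non null at the outer event $B \supseteq A$, Lemma \ref{lemma 4} (in its strict form, which is exactly where non-nullity is used) upgrades $f \succ_A g$ to $fAh \succ_B gAh$ for every act $h$. I would then apply the same lemma a second time, now with inner event $B$ non null at the outer event $C \supseteq B$, to the acts $\phi = fAh$ and $\psi = gAh$: this gives $\phi B h \succ_C \psi B h$. The crucial simplification is that, because $A \subseteq B$, the composite act $(fAh)Bh$ equals $f$ on $A$ and $h$ off $A$, i.e.\ it is nothing but $fAh$ again; likewise $(gAh)Bh = gAh$. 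Hence $fAh \succ_C gAh$ for every act $h$.

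Finally I would invoke the hypothesis that $A$ is null at $C$. Since $A$ is disjoint from $C \setminus A$, Lemma \ref{lemma 3} (taking the disjoint event to be $A$ itself) yields $fAh \sim_C gAh$ for all acts, flatly contradicting the strict preference just derived; equivalently, the strict preference $fAh \succ_C gAh$ already exhibits $A$ as non null at $C$. I expect the main obstacle to be purely the bookkeeping of the composite acts across the two invocations of Lemma \ref{lemma 4} --- in particular checking that $(fAh)Bh$ collapses to $fAh$ via $A \subseteq B$ --- together with making sure that at each step it is the \emph{strict} version of order-preservation that is in force, which is precisely what consumes the two non-nullity hypotheses $A$ non null at $B$ and $B$ non null at $C$.
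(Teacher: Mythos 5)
Your proposal is correct and follows essentially the same route as the paper's own proof: contradiction, nondegeneracy plus eventwise monotonicity to get $f\succ_A g$ for constant acts, and two applications of the order-preserving lemma (first with $A$ non null at $B$, then with $B$ non null at $C$) to obtain $fAh\succ_C gAh$, contradicting the nullity of $A$ at $C$. The only difference is that you spell out the bookkeeping the paper leaves implicit, namely that $(fAh)Bh$ collapses to $fAh$ because $A\subseteq B$, and that the final contradiction goes through Lemma \ref{lemma 3}.
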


If $B$ was null at $C$ in the lemma above, $A$ could be null at $B$ (e.g. $%
A=\emptyset $) or $A$ could be non null at $B$ (e.g. $A=B$).

\begin{definition}[qualitative probability]
A relation $\geq _{A}$ between events is a qualitative probability on $A\neq
\emptyset $, iff, for each triple of events $B$, $C$ and $D$ contained in $A$%
, $D$ disjoint of $B\cup C$, 
\[
\geq _{A}\text{is a weak ordering,}
\]%
\[
B\geq _{A}\emptyset \text{, }A>_{A}\emptyset 
\]%
and%
\[
B\geq _{A}C\Leftrightarrow B\cup D\geq _{A}C\cup D.
\]
\end{definition}

\begin{theorem}
\label{QPT}Given $C\in \Sigma$, $C\neq \emptyset $, the relation $\geq
_{C}$ is a qualitative probability on $\left( C,\left\{ A\in \Sigma%
:A\subseteq C\right\} \right) $.

\begin{proof}
(1). $\geq _{C}$ is a weak preference follows from the fact that $\succsim
_{C}$ is a weak preference.

(2). For each $B\subseteq C$, and constant acts $f$ and $g$ such that $%
f\succsim _{S}g$; by eventwise monotonicity ($B\neq \emptyset $), or by
definition of $\succsim _{\emptyset }$ ($B=\emptyset $), $f\succsim _{B}g$.
Using the definition of $\succsim _{B}$, $fBg\succsim _{B}g$ is true. As $%
fBg\sim _{C\sm B}$ $g$, by sure-thing principle, 
\begin{equation*}
fBg\succsim _{C}g=f\emptyset g,
\end{equation*}%
\qquad i.e., $B\geq _{C}\emptyset $.

(3). By nondegeneracy, there are constant acts $f$ and $g$ such that $f\succ
_{S}g$, so, by eventwise monotonicity, $f\succ _{C}g$. Using the lemma after
the definition of $\succsim _{C}$, $fCg\succ _{C}g=f\emptyset g$ is true.
I.e.$C>_{C}\emptyset $.

(4). For $A,B,D\in \Sigma$ such that $A,B,D\subseteq C$ and $D\cap
A=D\cap B=\emptyset $, and constant acts $f$ and $g$; if $C\sm D$ is null at $C$
then $\succsim _{D}$ agree with $\succsim _{C}$ and, using the definition of 
$\succsim _{D}$, 
\begin{eqnarray*}
fBg &\sim &_{D}fAg \\
f\left( B\cup D\right) g &\sim &_{D}f\left( A\cup D\right) g,
\end{eqnarray*}%
so, $B=_{C}A=_{C}\emptyset $ and $B\cup D=_{C}A\cup D$; but if $C\sm D$ is non
null at $C$, as, by the definition of $\succsim _{C\sm D}$, 
\begin{eqnarray*}
fBg &\sim &_{C\sm D}f\left( B\cup D\right) g \\
fAg &\sim &_{C\sm D}f\left( A\cup D\right) g
\end{eqnarray*}%
and, by sure-thing consistency and $C\sm D$ is non null at $C$, 
\begin{eqnarray*}
fBg &\succsim &_{C\sm D}fAg\Leftrightarrow fBg\succsim _{C}fAg \\
f\left( B\cup D\right) g &\succsim &_{C\sm D}f\left( A\cup D\right)
g\Leftrightarrow f\left( B\cup D\right) g\succsim _{C}f\left( A\cup D\right)
g,
\end{eqnarray*}%
then%
\begin{equation*}
fBg\succsim _{C}fAg\Leftrightarrow f\left( B\cup D\right) g\succsim
_{C}f\left( A\cup D\right) g.
\end{equation*}
\end{proof}
\end{theorem}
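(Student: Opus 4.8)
The plan is to verify, one defining clause at a time, that $\geq_C$ satisfies the definition of a qualitative probability, pushing each property through the defining equivalence $B\geq_C C'\iff fBg\succsim_C fC'g$ from the corresponding property of the underlying weak order $\succsim_C$. Before anything else I would check that $\geq_C$ is well defined, i.e. that the truth value of $fBg\succsim_C fC'g$ does not depend on which pair of constant acts $f\succsim_S g$ is used; this is exactly the prize-independence content of $P4\frac{1}{2}$, which gives the comparison for one admissible pair iff it holds for any other. Once this is settled, completeness and transitivity of $\geq_C$ are immediate, since $\succsim_C$ is itself a weak order and the weak-ordering clause just reads these properties off the acts of the form $f(\cdot)g$.

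For the non-negativity clause $B\geq_C\varnothing$, I would start from $f\succsim_S g$ and apply eventwise monotonicity $P3\frac{1}{2}$ to get $f\succsim_B g$ when $B\neq\varnothing$ (the case $B=\varnothing$ is trivial since $f\varnothing g=g$). The defining property of $\succsim_B$ then yields $fBg\succsim_B g$, and because $fBg$ and $g$ coincide on $C\sm B$ they are $\sim_{C\sm B}$-indifferent, so the sure-thing principle $P2\frac{1}{2}$ lifts the comparison to $fBg\succsim_C g=f\varnothing g$, i.e. $B\geq_C\varnothing$. For the non-triviality clause $C>_C\varnothing$ I would invoke nondegeneracy $P5\frac{1}{2}$ to pick constants with $f\succ_S g$, use $P3\frac{1}{2}$ to pass to $f\succ_C g$, and then the order-preserving lemma to conclude $fCg\succ_C g=f\varnothing g$.

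The substantive clause, and the step I expect to be the main obstacle, is the cancellation condition $B\geq_C C'\iff B\cup D\geq_C C'\cup D$ for $D$ disjoint from $B\cup C'$. The natural strategy is a case split on whether $C\sm D$ is null at $C$. If $C\sm D$ is null at $C$, then $B$ and $C'$, being subevents of $C\sm D$, are themselves null at $C$, so $B=_C C'=_C\varnothing$ and likewise $B\cup D=_C C'\cup D$ (each differs from $D$ only by a null set), and both sides of the equivalence hold. The real work is the non-null branch: here the key observation is that $fBg$ and $f(B\cup D)g$ differ only on $D$ and hence agree on $C\sm D$, so by the defining property of $\succsim_{C\sm D}$ they are $\sim_{C\sm D}$-indifferent, and the same holds for the two $C'$-versions. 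Since $C\sm D$ is non-null at $C$, the sure-thing principle $P2\frac{1}{2}$ lets me move freely between $\succsim_{C\sm D}$-comparisons and $\succsim_C$-comparisons of these four acts, and chaining the indifferences delivers the equivalence. The delicate part is the bookkeeping of the disjointness and agreement-on-complement conditions required to apply $P2\frac{1}{2}$ cleanly, together with handling the degenerate boundary through the null/non-null dichotomy; once that dichotomy is in place, each branch is routine.
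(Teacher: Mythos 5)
Your proposal is correct and follows essentially the same route as the paper: reading each clause of the qualitative-probability definition off the corresponding property of $\succsim_C$, with clause (2) via eventwise monotonicity plus the sure-thing principle, clause (3) via nondegeneracy, and the cancellation clause via the same case split on whether $C\sm D$ is null at $C$, using agreement of the relevant acts on $C\sm D$ together with order preservation in the non-null branch. The only (harmless) differences are cosmetic: you make the $P4\frac{1}{2}$ well-definedness check explicit, and in the null branch you reach the collapse to $=_C$-indifference through the nullity lemmas rather than through the agreement of $\succsim_D$ with $\succsim_C$.
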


The lemma below defines the relation $\geq _{D}$ as the unique qualitative
probability ($C,D\neq \emptyset $, by assumption) at any $C\subseteq D$ non
null at $D$.

\begin{lemma}[weak comparative probability - event independence]
Given $A,B,C,D\in \Sigma$ such that $A,B\subseteq C\subseteq D$, $C$
non null at $D$, 
\begin{equation*}
B\geq _{C}A\Longleftrightarrow B\geq _{D}A.
\end{equation*}

\begin{proof}
Given constant acts $f$ and $g$, by order-preserving lemma and $C$ non null
at $D$, 
\begin{equation*}
fAg\succsim _{C}fBg\Leftrightarrow fAg\succsim _{D}fBg.
\end{equation*}
\end{proof}
\end{lemma}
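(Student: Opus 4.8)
The plan is to peel off the definition of $\geq$ and reduce the whole statement to one application of the sure-thing principle $P2\frac12$. First I would fix constant acts $f,g$ with $f\succsim_S g$ (by $P4\frac12$ the comparison does not depend on the chosen pair). Then $B\geq_C A$ and $B\geq_D A$ are by definition $fBg\succsim_C fAg$ and $fBg\succsim_D fAg$, so the entire content is the equivalence
\[
fBg\succsim_C fAg\iff fBg\succsim_D fAg.
\]

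The structural observation that makes this work is that, since $A,B\subseteq C$, the acts $fBg$ and $fAg$ coincide on $S\sm C$: both equal the constant act $g$ there. I would split the indexing event along the bipartition $D=C\cup(D\sm C)$ and invoke the first part of $P2\frac12$, giving
\[
fBg\succsim_C fAg\ \text{ and }\ fBg\succsim_{D\sm C}fAg\iff fBg\succsim_D fAg.
\]
It then remains only to discard the middle conjunct. Because $fBg$ and $fAg$ agree on $D\sm C$ (both are $g$ there), composing each with itself on $D\sm C$ yields literally identical acts, so the ``for some $h$'' clause of $P1\frac12$ gives weak preference in both directions and hence $fBg\sim_{D\sm C}fAg$; the case $D\sm C=\varnothing$ is covered by the degeneracy of $\succsim_\varnothing$. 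Thus the middle conjunct holds automatically and the displayed biconditional collapses to exactly $fBg\succsim_C fAg\iff fBg\succsim_D fAg$, which is the claim.

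A more machine-aligned alternative is to apply the order-preserving Lemma \ref{lemma 4} to the nested pair $C\subseteq D$ (here the hypothesis ``$C$ non null at $D$'' is used): it gives $fBg\succsim_C fAg\iff (fBg)Ch\succsim_D(fAg)Ch$ for \emph{every} act $h$, and taking $h=g$ recovers $fBg\succsim_D fAg$ on the right, whence the forward implication is immediate. The step I expect to be the main obstacle on this route is the converse, since Lemma \ref{lemma 4} returns $\succsim_C$ only from the $\succsim_D$-comparison of the composed acts for all $h$, whereas $B\geq_D A$ supplies it for the single value $h=g$; one would have to argue that for these particular acts the truth value is independent of $h$ (again because $(fBg)Ch$ and $(fAg)Ch$ agree off $C$). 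For this reason I prefer the sure-thing route above: it sidesteps the quantifier over $h$ and, notably, does not even require $C$ to be non null at $D$ — that hypothesis is really present to guarantee, through Theorem \ref{QPT} and the preceding lemmas, that the relation $\geq_C$ is a genuine non-degenerate qualitative probability, not to secure the bare equivalence of the two ``at least as probable'' relations.
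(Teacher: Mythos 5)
Your forward direction is fine, and the ``machine-aligned alternative'' you set aside is in fact the paper's own proof; but your preferred sure-thing route has a genuine flaw in the backward direction, and your closing claim that the hypothesis ``$C$ non null at $D$'' is dispensable is false. To discard the middle conjunct and still recover $fBg\succsim_C fAg$ from $fBg\succsim_D fAg$ you must use the \emph{backward} implication of the first clause of $P2\frac{1}{2}$ (namely $f\succsim_D g\Rightarrow f\succsim_C g$). Although that clause is typeset with $\Leftrightarrow$, this direction cannot be its operative content: run your own argument with a $C$ that \emph{is} null at $D$ and arbitrary acts agreeing off $C$ --- they are $\sim_{D\sm C}$-indifferent, hence $\sim_D$-indifferent (by nullity of $C$), hence by the backward implication $\sim_C$-indifferent, so $\succsim_C$ would be degenerate, contradicting $P3\frac{1}{2}$ and $P5\frac{1}{2}$ and collapsing the whole hierarchy of null events. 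Consistently, the paper only ever uses that clause in the forward direction and obtains converses via non-nullity. Moreover the lemma itself fails without the non-nullity hypothesis: if $C$ is null at $D$, then $A$ and $B$ are null at $D$ (Nullity theorem), so $A=_D B$ and $B\geq_D A$ holds vacuously, while $\geq_C$ may rank $A$ strictly above $B$; so the $\Leftarrow$ direction is simply false there. The hypothesis is doing real work in the equivalence, not merely certifying that $\geq_C$ is a qualitative probability.

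The correct backward argument --- and the paper's --- is the one you demoted: by contraposition, if $fAg\succ_C fBg$, then since $fAg\sim_{D\sm C}fBg$ and $C$ is non null at $D$, the \emph{second} (strict) clause of $P2\frac{1}{2}$ gives $fAg\succ_D fBg$, hence not $fBg\succsim_D fAg$. This is exactly what the order-preserving Lemma \ref{lemma 4} packages. The obstacle you anticipate on that route (the universal quantifier over $h$) is not real: the $\Leftarrow$ direction of that lemma needs the composed comparison only for \emph{some} $h$, since its proof goes through completeness and the strict forward direction; and in any case $(fBg)Ch$ and $(fAg)Ch$ agree off $C$, so their $\succsim_D$-ranking is independent of $h$, and taking $h=g$ returns the original acts.
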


\begin{definition}[fine]
A qualitative probability $\geq _{A}$ ($A\neq \emptyset $) is fine, iff, for
each event $B$ contained in $A$ such that $B>_{A}\emptyset $, there exits a
finite $\Sigma -$ measurable partition of $A$, $A_{i}$, satisfying $%
B>_{A}A_{i}$ for each $i$.
\end{definition}

\begin{lemma}[fineness]
For each $C\neq \emptyset $, $\geq _{C}$ is fine.

\begin{proof}
Let $B\subseteq C$ be a non null event at $C$. By nondegeneracy, there exist
constant acts $f$ and $g$ such that, $f\succ _{S}g$, and, by eventwise
monotonicity, $f\succ _{B}g$. By order-preserving lemma and $B$ is non null
at $C$, $fBg\succ _{C}g$. Next, by small event continuity, there is a finite
partition of $C$, $\left\{ A_{i}\right\} _{i=1}^{n}\subseteq \Sigma$,
such that $fBg\succ _{C}fA_{i}g$ for each $i=1,...,n$. In other words, $%
B>_{C}A_{i}$ for each $i=1,...,n$ (i.e., $\geq _{C}$ is fine).
\end{proof}
\end{lemma}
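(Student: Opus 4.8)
The plan is to unwind the definition of fineness and reduce the claim to a single application of small-event continuity $P6\frac{1}{2}$. Fix $C\neq\emptyset$ and let $B\subseteq C$ be an event with $B>_C\emptyset$; by the earlier lemmas relating $>_C$ to nullity (a null event is $=_C\emptyset$, and a non null event is $>_C\emptyset$), this hypothesis is equivalent to $B$ being non null at $C$. What I must produce is a finite $\Sigma$-measurable partition $\{A_i\}$ of $C$ with $B>_C A_i$ for every $i$, where, by the definition of $\geq_C$, the relation $B>_C A_i$ means $fBg\succ_C fA_i g$ for a fixed pair of constant acts with $f\succ_S g$.

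First I would fix such a pair $f\succ_S g$ using nondegeneracy ($P5\frac{1}{2}$), and upgrade it to $f\succ_B g$ by eventwise monotonicity ($P3\frac{1}{2}$), which is legitimate because $B$ is non empty (being non null at $C$). The key preparatory step is to convert this into the single strict comparison $fBg\succ_C g$: since $B$ is non null at $C$, the order-preserving lemma (Lemma~\ref{lemma 4}) transports $f\succ_B g$ to $fBh\succ_C gBh$ for every act $h$, and taking $h=g$ gives $fBg\succ_C gBg=g$.

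With the strict preference $fBg\succ_C g$ in hand, I would apply $P6\frac{1}{2}$ on the non empty event $C$ to the triple consisting of the act $fBg$, the constant act $g$, and the constant act $h=f$. This yields a finite $\Sigma$-measurable partition $\{A_i\}_{i=1}^n$ of $C$ with $fBg\succ_C fA_i g$ for each $i$; I only need the first of the two conclusions of $P6\frac{1}{2}$. Reading $fBg\succ_C fA_i g$ back through the definition of $\geq_C$ gives exactly $B>_C A_i$ for every $i$, which is the fineness condition.

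The only point demanding care is bookkeeping of the constants: the perturbed acts $hA_i g$ produced by $P6\frac{1}{2}$ must coincide with the two-valued acts $fA_i g$ appearing in the definition of $\geq_C$, so the constant $h$ fed into $P6\frac{1}{2}$ must be taken to be the high outcome $f$ and the base act the low outcome $g$, with $f\succ_S g$ throughout. There is no deeper obstacle here; once $fBg\succ_C g$ is extracted from non-nullity via the order-preserving lemma, fineness is immediate from small-event continuity.
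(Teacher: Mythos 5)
Your proposal is correct and follows essentially the same route as the paper's proof: nondegeneracy gives $f\succ_S g$, eventwise monotonicity gives $f\succ_B g$, the order-preserving lemma converts non-nullity of $B$ at $C$ into $fBg\succ_C g$, and one application of small-event continuity with the constant act $f$ produces the partition with $fBg\succ_C fA_ig$, i.e.\ $B>_C A_i$. The only additions are your explicit remarks that $B>_C\emptyset$ is equivalent to $B$ being non null at $C$ and the bookkeeping of which constant is fed into $P6\frac{1}{2}$, both of which the paper leaves implicit.
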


\begin{definition}[tight]
A qualitative probability $\geq _{A}$ ($A\neq \emptyset $) is tight, iff, $%
B=_{A}C$, for each pair of events $B$ and $C$ contained in $A$, satisfying: 
\[
B\cup D>_{A}C\text{ and }C\cup E>_{A}B,
\]
for each pair of events $D$ and $E$ contained in $A$, and such that, 
\[
D,E>_{A}\emptyset \text{ and }B\cap D=\emptyset =C\cap E.
\]
\end{definition}

\begin{lemma}[fineness and tightness]
\label{FTT}Given $A$,$B\subseteq C$, if $B>_{C}A$ then there exists a finite
partition of $C$, $\left\{ A_{i}\right\} _{i=1}^{n}\subseteq \Sigma$,
such that $B>_{C}A\cup A_{i}$ for each $i=1,...,n$.

\begin{proof}
$B>_{C}A$ implies $fBg\succ _{C}fAg$ for some constant acts $f$ and $g$ such
that $f\succ _{S}g$ (nondegeneracy), $C\neq \emptyset $ and $B$ non null at $%
C$. By small event continuity, there is a finite partition of $C$, $\left\{
A_{i}\right\} _{i=1}^{n}\subseteq \Sigma$, such that $fBg\succ
_{C}fA_{i}\left( fAg\right) =f\left( A\cup A_{i}\right) g$ for each $%
i=1,...,n$. In other words, $B>_{C}A\cup A_{i}$ for each $i=1,...,n$.
\end{proof}
\end{lemma}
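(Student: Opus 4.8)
The plan is to derive the statement as a direct application of small-event continuity ($P6\frac{1}{2}$), the only nontrivial work being to unfold the abstract relation $B>_{C}A$ into a strict preference for $\succsim_{C}$ and to identify the perturbed act produced by the axiom. First I would translate the hypothesis: by the definition of $\geq_{C}$, together with nondegeneracy ($P5\frac{1}{2}$) to secure a strict pair of prizes and prize independence ($P4\frac{1}{2}$) to render the choice of prizes immaterial, the relation $B>_{C}A$ furnishes constant acts $f\succ_{S}g$ with
\[
fBg\succ_{C}fAg.
\]
Since $\geq_{C}$ is a qualitative probability only for $C\neq\emptyset$, the ambient event is non empty, and $B$ is non null at $C$ (otherwise $B=_{C}\emptyset$, contradicting $B>_{C}A\geq_{C}\emptyset$); thus all the hypotheses of $P6\frac{1}{2}$ are met.

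Next I would feed this strict comparison into $P6\frac{1}{2}$, taking the preferred act to be $fBg$, the dispreferred act to be $fAg$, the indexing event to be $C$, and—this is the decisive choice—the perturbing constant act to be $h=f$. The axiom then delivers a finite $\Sigma$-measurable partition $\{A_{i}\}_{i=1}^{n}$ of $C$ with
\[
fBg\succ_{C}fA_{i}(fAg)\quad\text{and}\quad fA_{i}(fBg)\succ_{C}fAg
\]
for each $i$; only the first conclusion is needed. The key bookkeeping identity is
\[
fA_{i}(fAg)=f(A\cup A_{i})g,
\]
which holds because $fA_{i}(fAg)$ equals $f$ on $A_{i}$ and equals $fAg$ off $A_{i}$, i.e. $f$ on $A\setminus A_{i}$ and $g$ on $S\setminus(A\cup A_{i})$, so altogether it is $f$ exactly on $A\cup A_{i}$ and $g$ elsewhere. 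Substituting, $fBg\succ_{C}f(A\cup A_{i})g$ for each $i$, which by the definition of $\geq_{C}$ is precisely $B>_{C}A\cup A_{i}$ for each $i$, as required.

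The main obstacle is not any hard estimate but the conceptual setup: one must see that enlarging the event from $A$ to $A\cup A_{i}$ is exactly a perturbation of the dispreferred act $fAg$ by the better prize $f$ on the small piece $A_{i}$, so that the correct instantiation of $P6\frac{1}{2}$ is with $h=f$ rather than an arbitrary perturbing act. Once this is recognized, the remaining care is purely in the passage from the order relation $B>_{C}A$ to a single witnessing strict preference $fBg\succ_{C}fAg$, which leans on nondegeneracy and prize independence, and in verifying the act identity above; both are routine, so the whole argument collapses to a one-line invocation of the continuity axiom.
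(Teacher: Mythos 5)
Your proposal is correct and follows essentially the same route as the paper: unfold $B>_{C}A$ into a witnessing strict preference $fBg\succ_{C}fAg$ via nondegeneracy, apply small-event continuity with the constant perturbing act $h=f$, and use the identity $fA_{i}(fAg)=f(A\cup A_{i})g$ to read the conclusion back as $B>_{C}A\cup A_{i}$. The only difference is that you spell out the justifications (non-nullity of $B$, the role of $P4\tfrac{1}{2}$) that the paper leaves implicit.
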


Several conclusions can be obtained from facts above (see theorem 3, page
37, \cite{Sav1954}, and page 195, \cite{Fish1971}). Besides, properties
provided above imply that, for each qualitative probability at each non
empty event, there exists a unique \textbf{finitely additive probability}
(fap) representing it (see theorem 14.2, page 195, and its proof in 198-199, 
\cite{Fish1971}), with the caveat that, for any $C\subseteq D$ non null at $%
D $, the fap at $C$ is the fap at $D$ conditioned at $C$. Furthermore, these
properties imply the existence of a \textbf{SEU representation} for each non
empty event (see next appendix).

From the theorem \ref{QPT} proved above, for each $C\neq \emptyset $, $\geq
_{C}$ is a qualitative probability. Using the lemma \ref{FTT} and theorem 4
(page 38, \cite{Sav1954}), $\geq _{C}$ is fine and tight. By the corollary 1
(page 38, \cite{Sav1954}), the only probability measure that almost agrees
with $\geq _{C}$ (if an event is at least as probable as another event, then the probability of the first event is greater than or equal to the probability of the second event), strictly agrees (an event is at least as probable as another event, iff, the probability of the first event is greater than or equal to the probability of the second event) with it. By the theorem 3 (page 37, \cite%
{Sav1954}), there exists one and only one $P_{C}$ on $\left( C,\left\{ A\in 
\Sigma:A\subseteq C\right\} \right) $\ that almost agrees with $\geq
_{C}$ Thus, there exists one and only one $P_{C}$ on $\left( C,\left\{ A\in 
\Sigma:A\subseteq C\right\} \right) $\ that strictly agrees with $\geq
_{C}$.

In those lemmas and theorems other properties of $P_{C}$ are provided. The
most important is convex-valuedness, that is, for each $B\subseteq C$ and $%
\lambda \in \left[ 0,1\right] $ there exists $A\subseteq B$ such that $%
P_{C}\left( A\right) =\lambda P_{C}\left( B\right) $. This property is used
for defining specific partitions of an event with pre-established
probabilities and outcomes.

Let $\left\{ P_{A}\right\} _{A\in \Sigma\backslash \left\{ \emptyset
\right\} }$ be the family of finitely additive probabilities from the
theorem 14.2, \cite{Fish1971}, where $P_{A}$ represents $\geq _{A}$, and if $%
A$ is non null at $B\supseteq A$, by weak comparative probability - event
independence, and uniqueness, $P_{A}$ is $P_{B}$ conditioned on $A$. Of
course, if $C\supseteq B$ and $A$ is non null at $C$, then $B$ is non null
at $C$, $P_{A}$ is $P_{C}$ conditioned on $A$ and $P_{B}$ is $P_{C}$
conditioned on $B$. As it is known, for $D\subseteq A$, $P_{A}\left(
D\right) =\frac{P_{B}\left( D\right) }{P_{B}\left( A\right) }$, $P_{B}\left(
D\right) =\frac{P_{C}\left( D\right) }{P_{C}\left( B\right) }$ and $%
P_{B}\left( A\right) =\frac{P_{C}\left( A\right) }{P_{C}\left( B\right) }$,
so, $P_{A}\left( D\right) =\frac{\frac{P_{C}\left( D\right) }{P_{C}\left(
B\right) }}{\frac{P_{C}\left( A\right) }{P_{C}\left( B\right) }}=\frac{%
P_{C}\left( D\right) }{P_{C}\left( A\right) }$. In the general case,
qualitative probabilities are not the same necessarily, but for each $%
A\subseteq B\subseteq C$, $B\neq \emptyset $, $P_{C}\left( A\right)
=P_{B}\left( A\right) P_{C}\left( B\right) $.

The notion of relative null events defines an ordering in the events
space with elements boundlessly smaller than other elements. When $A$ is
null and $B$ is non null at $C$, for any finite partitioning $\left\{
B_{k}\right\} _{k=1}^{n}$ of $B$, by nullity, for some $k$, $B_{k}$ is non
null at $C$ . I.e., $A$ is \textquotedblleft infinitely\textquotedblright\
smaller than $B$, violating the Archimedean property (see \cite{Hausner1954}%
). As it is shown below, this non-Archimedean ordering defines an
equivalence relation in the events space such that the quocient space is
a linearly ordered set with the straightforward extension of the
non-Archimedean ordering to this quocient space.

\begin{definition}[$\gg $]
Given events $A$ and $B$, $A\gg B$ iff there exists a event $%
C\supseteq A,B$ such that $A$ is non null at $C$, but $B$ is null at $C$.
\end{definition}

\begin{lemma}
$\gg $ is well defined.

\begin{proof}
Let $A$ and $B$ be events such that $A\gg B$. Suppose, by absurd, there
is a event $D\supseteq A,B$ such that $B$ is non null at $D$. Then, for
some event $C\supseteq A,B$, $A$ is non null at $C$ and $B$ is null at $%
C$, but for some event $D\supseteq A,B$, $B$ is non null at $D$.

By nullity, $A\cup B$ is non null at $C$ because $A\subseteq A\cup B$.
Besides, as $B$ is null at $C$, by lemma above, $B$ is null at $A\cup B$.
Moreover, as $B$ is non null at $D$, by nullity, $B$ is non null at $A\cup B$%
, a contradiction.
\end{proof}
\end{lemma}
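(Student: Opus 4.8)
The plan is to establish the contrapositive content that makes the existential definition unambiguous: if $A\gg B$, then $B$ is forced to be null at \emph{every} common superevent of $A$ and $B$, so no rival witness can reverse the verdict. Concretely, suppose $A\gg B$ with some witness $C\supseteq A,B$ (so $A$ is non-null at $C$ while $B$ is null at $C$), and suppose toward a contradiction that some other event $D\supseteq A,B$ has $B$ non-null at $D$. The key idea is to funnel both witnesses through the smallest common superevent $A\cup B$, which is contained in both $C$ and $D$, and then apply the nullity calculus of Theorem~\ref{nullity} in the appropriate directions.

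First I would show that $A\cup B$ is non-null at $C$. Since $A\subseteq A\cup B\subseteq C$, the first (downward) clause of Theorem~\ref{nullity} says that nullity of $A\cup B$ at $C$ would force nullity of $A$ at $C$; as $A$ is non-null at $C$ by hypothesis, the event $A\cup B$ must be non-null at $C$.

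Next I would transfer the nullity of $B$ from the top $C$ down to the intermediate event $A\cup B$. Here $B\subseteq A\cup B\subseteq C$, with $A\cup B$ non-null at $C$ (just shown) and $B$ null at $C$; the restriction lemma stating that a null event stays null when the enveloping event is replaced by a non-null intermediate event then yields that $B$ is null at $A\cup B$.

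Finally I would propagate this nullity upward to $D$ to reach the contradiction. Since $B\subseteq A\cup B\subseteq D$, the third (transitivity) clause of Theorem~\ref{nullity} says that $B$ null at $A\cup B$ implies $B$ null at $D$, contradicting the assumption that $B$ is non-null at $D$. The hard part is purely organizational rather than computational: choosing $A\cup B$ as the pivot and threading the three nullity rules in the correct directions---downward nullity to keep $A\cup B$ non-null, the restriction lemma to bring the nullity of $B$ down to $A\cup B$, and transitivity to lift it back up to $D$. No quantitative estimate is involved; once the pivot is fixed, the argument is bookkeeping with the nullity relations already proved.
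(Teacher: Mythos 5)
Your argument is correct and is essentially identical to the paper's proof: both pivot through $A\cup B$, use the downward clause of the nullity theorem to show $A\cup B$ is non-null at $C$, the restriction lemma to push nullity of $B$ down to $A\cup B$, and the transitivity clause (you state it directly, the paper its contrapositive) to contradict non-nullity of $B$ at $D$. No substantive difference.
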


By definition, $A\gg \emptyset $ for each event $A\neq \emptyset $.

\begin{lemma}
Given events $A$ and $B$, $A\gg B$ iff $A$ is non null at $A\cup B$, but 
$B$ is null at $A\cup B$.

\begin{proof}
Let $A$ and $B$ be events such that $A\gg B$. Then, for some event $%
C\supseteq A,B$, $A$ is non null at $C$ and $B$ is null at $C$. By nullity, $%
A\cup B$ is non null at $C$ because $A\subseteq A\cup B$. Besides, as $B$ is
null at $C$, by lemma above, $B$ is null at $A\cup B$. Moreover, as $A$ is
non null at $C$, by nullity, $A$ is non null at $A\cup B$.
\end{proof}
\end{lemma}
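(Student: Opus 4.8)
The plan is to prove the two implications separately, treating the ``if'' direction as immediate and concentrating the work on the ``only if'' direction. For the ``if'' direction, I would suppose that $A$ is non null at $A\cup B$ and $B$ is null at $A\cup B$; since $A\cup B\supseteq A,B$, the event $C:=A\cup B$ is exactly a witness of the kind demanded by the definition of $\gg$, and so $A\gg B$ follows with no further argument. In effect, this direction merely records that $A\cup B$ is an \emph{admissible} choice of ambient event, so the content of the lemma is entirely in the reverse implication.

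The substance is the ``only if'' direction, which asserts that whenever \emph{some} ambient event $C$ witnesses $A\gg B$, the \emph{canonical} ambient event $A\cup B$ witnesses it too. So I would suppose $A\gg B$ and fix $C\supseteq A,B$ with $A$ non null at $C$ and $B$ null at $C$, noting that $A\subseteq A\cup B\subseteq C$ and $B\subseteq A\cup B\subseteq C$. First I would transport the non-nullity of $A$ downward in the ambient event: since nullity propagates to larger ambient events (the third clause of the Nullity theorem, Theorem \ref{nullity}), its contrapositive gives that $A$ non null at the larger ambient $C$ forces $A$ non null at the smaller ambient $A\cup B$, which is the first desired conclusion.

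For the second conclusion I would first secure the auxiliary fact that $A\cup B$ is non null at $C$: because $A\subseteq A\cup B$ and subevents of a null event are null (the first clause of Theorem \ref{nullity}), were $A\cup B$ null at $C$ then $A$ would be null at $C$, contradicting the hypothesis. With $A\cup B$ non null at $C$ in hand, I would apply the lemma immediately preceding the definition of $\gg$ (nullity descends to a non-null intermediate ambient) to the chain $B\subseteq A\cup B\subseteq C$: since $A\cup B$ is non null at $C$ and $B$ is null at $C$, it follows that $B$ is null at $A\cup B$. Combining the two conclusions completes the reverse implication and hence the lemma.

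The only place demanding care is the bookkeeping of the direction of nullity propagation: nullity moves \emph{up} as the ambient event grows, so non-nullity moves \emph{down} as it contracts, and the two conclusions therefore pull in opposite directions along the chain $A\cup B\subseteq C$. The main, and only mildly subtle, obstacle is to isolate the intermediate claim that $A\cup B$ is non null at $C$, since this is precisely the hypothesis that licenses the descent lemma for $B$; without it one could not conclude that $B$ remains null once the ambient event has been contracted from $C$ down to $A\cup B$.
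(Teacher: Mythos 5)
Your proposal is correct and follows essentially the same route as the paper: both arguments first establish that $A\cup B$ is non null at $C$ (via the contrapositive of the subevent clause of the Nullity theorem), then invoke the lemma preceding the definition of $\gg$ to push the nullity of $B$ down from $C$ to $A\cup B$, and use the transitivity clause of Nullity to pull the non-nullity of $A$ down from $C$ to $A\cup B$. The only difference is that you spell out the trivial ``if'' direction, which the paper leaves implicit.
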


\begin{lemma}[dominance - \protect\cite{Hausner1954}]
$\gg $ is irreflexive and transitive.

\begin{proof}
$A\gg A$ iff $A$ is non null at $A$, so $A\neq \emptyset $, but $A$ is null
at $A$, so $A=\emptyset $, a contradiction.

$A\gg B\gg C$ iff $A$ is non null at $A\cup B$, but $B$ is null at $A\cup B$%
, and $B$ is non null at $B\cup C$, but $C$ is null at $B\cup C$. Then, $%
A,B\neq \emptyset $; and $A\cup B$ is non null at $A\cup B\cup C$,
otherwise, by nullity, $C\gg B$. Besides, if $B\cup C$ is non null at $A\cup
B\cup C$, by lemma above (contrapositive), $B$ is non null at $A\cup B\cup C$%
, and by nullity, $B$ is non null at $A\cup B$, a contradiction. Thus, $%
A\cup B$ is non null at $A\cup B\cup C$, and $B\cup C$ is null at $A\cup
B\cup C$, implying, by nullity, $A\gg C$.
\end{proof}
\end{lemma}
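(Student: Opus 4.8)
The plan is to work entirely through the intrinsic characterization proved just above, namely that $A\gg B$ holds exactly when $A$ is non null at $A\cup B$ while $B$ is null at $A\cup B$. Reasoning with this description avoids hunting for the existential witness in the definition of $\gg$, and reduces both claims to propagating nullity through the three transport rules of the Nullity theorem (\ref{nullity}), together with the fact that a nonempty event is non null at itself.

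Irreflexivity is immediate: $A\gg A$ would require $A$ to be both non null and null at $A\cup A=A$. But the only event null at itself is $\varnothing$ (which is null everywhere), whereas non-nullity at $A$ forces $A\neq\varnothing$; these cannot hold simultaneously, so $\gg$ is irreflexive.

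For transitivity I assume $A\gg B$ and $B\gg C$ and propose $A\cup B\cup C$ as a single common witness for $A\gg C$. From the hypotheses I read off that $B$ is null at $A\cup B$ and $C$ is null at $B\cup C$, and I push each upward by the third clause of \ref{nullity} to conclude that both $B$ and $C$ are null at $A\cup B\cup C$. In particular $C$ is null at $A\cup B\cup C$, which is half of what $A\gg C$ needs with this witness. The remaining task is to show that $A$ is non null at $A\cup B\cup C$.

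The step I expect to be the crux is exactly this non-nullity of $A$ at the ambient event, and I would argue it by contradiction. If $A$ were also null at $A\cup B\cup C$, then — since $B\setminus A$ and $C\setminus(A\cup B)$ are null there as well, being subevents of the already-null $B$ and $C$ by the first clause of \ref{nullity} — the second clause (merging the null pieces of a bipartition, applied twice) would force $A\cup B\cup C$ to be null at itself. But $A\neq\varnothing$ because $A$ is non null at $A\cup B$, so $A\cup B\cup C$ is nonempty and hence non null at itself, a contradiction. Thus $A$ is non null and $C$ is null at $A\cup B\cup C$, which gives $A\gg C$. The only genuine care needed throughout is to respect the directions in which nullity transports — upward for a single subevent and across a bipartition when merging null pieces — which is precisely what \ref{nullity} supplies.
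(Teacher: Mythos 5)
Your proof is correct, and your irreflexivity argument is identical to the paper's. For transitivity, both you and the paper take $A\cup B\cup C$ as the common witness and lean on Theorem \ref{nullity}, but the decompositions differ. The paper reasons about the composite events $A\cup B$ and $B\cup C$: it shows $A\cup B$ is non null at $A\cup B\cup C$ by deriving the absurdity $C\gg B$ otherwise, and shows $B\cup C$ is null there by a contrapositive argument from $B$'s non-nullity at $B\cup C$; the final passage from ``$A\cup B$ non null and $B\cup C$ null'' to ``$A$ non null and $C$ null'' is left implicit in the phrase ``by nullity.'' You instead transport the nullity of $B$ and of $C$ individually up to $A\cup B\cup C$ via the third clause of Theorem \ref{nullity}, and then rule out $A$ being null there by splitting $A\cup B\cup C$ into the pieces $A$, $B\setminus A$ and $C\setminus(A\cup B)$ and applying the merging clause twice against the fact that a nonempty event is non null at itself. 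Your route stays entirely at the level of nullity propagation, never detouring through auxiliary $\gg$-statements such as $C\gg B$, and it makes fully explicit the concluding step that the paper elides; the paper's route economizes by reusing the characterization and well-definedness lemmas already established for $\gg$. Both arguments are sound.
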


From the non-Archimedean ordering, an equivalence relation on the events
space can be derived, and from this equivalence relation, a partitioning of
this space in equivalence classes of Archimedean-orderable events, resulting
in a non-Archimedean linear ordering on the quocient space.

\begin{definition}[$\approx $]
Given events $A$ and $B$, $A\approx B$ iff $\lnot \left( A\gg B\right) $
and $\lnot \left( B\gg A\right) $.
\end{definition}

By definition, $A\approx \emptyset $ iff $A=\emptyset $. The class of the
empty set is the trivial equivalence class.

\begin{lemma}
Given non trivial events $A$ and $B$, $A\approx B$ iff $A$ and $B$ are
non null at $A\cup B$.

\begin{proof}
By lemma above and definition of $\approx $, $A$ and $B$ are non null at $%
A\cup B$, or both are null at $A\cup B$. As $A$ and $B$ are non trivial,
both are non null at $A\cup B$.
\end{proof}
\end{lemma}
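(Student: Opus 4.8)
The plan is to reduce everything to relative nullity at the single event $D=A\cup B$ and then play the two directions off against the characterization lemma immediately preceding the statement, which says that $A\gg B$ holds iff $A$ is non null at $A\cup B$ while $B$ is null at $A\cup B$. Abbreviating by $a$ the assertion ``$A$ is non null at $D$'' and by $b$ the assertion ``$B$ is non null at $D$'', the definition $A\approx B$ unfolds as $\lnot(A\gg B)\wedge\lnot(B\gg A)$, which by that lemma becomes $(\lnot a\vee b)\wedge(\lnot b\vee a)$. A one-line propositional simplification shows this is exactly $a\Leftrightarrow b$: either both $A$ and $B$ are non null at $D$, or both are null at $D$. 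This already disposes of the ``if'' direction, since if $A$ and $B$ are both non null at $D$ then neither $A\gg B$ nor $B\gg A$ can hold (each would require the other event to be null at $D$), whence $A\approx B$.

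For the ``only if'' direction it remains to rule out the case that $A$ and $B$ are both null at $D$, and this is where the non-triviality hypothesis must be used. I would argue by contradiction: assuming both are null at $D$, first note $B\sm A\subseteq B$, so the first clause of the Nullity theorem upgrades ``$B$ null at $D$'' to ``$B\sm A$ null at $D$''. Since $A$ and $B\sm A$ are disjoint with union $A\cup B=D$, the second clause of the Nullity theorem then forces $D$ itself to be null at $D$. But $D=A\cup B\neq\emptyset$ because $A$ and $B$ are non trivial, and an earlier lemma records that a non empty event is non null at itself --- a contradiction. Hence the ``both null'' alternative is impossible and both $A$ and $B$ are non null at $D$.

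The only place any real care is needed is this last elimination step: the propositional bookkeeping and the ``if'' direction are immediate, but I expect the main (small) obstacle to be chaining the two halves of the Nullity theorem in the correct order --- first passing from $B$ null to $B\sm A$ null by monotonicity of nullity under subevents, then recombining $A$ with $B\sm A$ to collapse all of $D$ --- and invoking non-triviality precisely to guarantee $D\neq\emptyset$, so that ``$D$ non null at $D$'' delivers the contradiction.
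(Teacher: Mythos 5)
Your proof is correct and follows the same route as the paper: unfold $A\approx B$ via the immediately preceding lemma characterizing $\gg$ through nullity at $A\cup B$, reduce to the dichotomy ``both non null or both null at $A\cup B$'', and use non-triviality to exclude the second alternative. The paper's own proof is a one-liner that leaves that exclusion implicit; your explicit chaining of the Nullity theorem (subevent monotonicity to get $B\sm A$ null, then recombination of $A$ with $B\sm A$ to conclude $A\cup B$ is null at itself, contradicting the lemma that a non empty event is non null at itself) is exactly the intended justification.
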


\begin{lemma}
$\approx $ is an equivalence relation on $\Sigma$.

\begin{proof}
See \cite{Hausner1954}.
\end{proof}
\end{lemma}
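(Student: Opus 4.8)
The plan is to verify the three defining properties of an equivalence relation, with essentially all of the work concentrated in transitivity. Reflexivity is immediate: since the dominance lemma gives that $\gg$ is irreflexive, $\lnot(A\gg A)$ holds for every $A$, so $A\approx A$. Symmetry is built into the definition, as $A\approx B$ is the conjunction of $\lnot(A\gg B)$ and $\lnot(B\gg A)$, which is visibly symmetric in $A$ and $B$.

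For transitivity I would first recast it as the \emph{negative transitivity} of the strict relation $\gg$. Indeed, the contrapositive of ``$A\approx B$ and $B\approx C$ imply $A\approx C$'' is that $A\gg C$ or $C\gg A$ forces $\lnot(A\approx B)$ or $\lnot(B\approx C)$; so it suffices to prove the single implication
\[
A\gg C \;\Longrightarrow\; A\gg B \ \text{ or } \ B\gg C
\]
for all $B$ (the case $C\gg A$ follows by relabeling). I would dispose of the trivial events first: if $B=\emptyset$ then $A\neq\emptyset$ (as $A\gg C$ forces $A\neq\emptyset$), whence $A\gg\emptyset=A\gg B$; and if $C=\emptyset$ then $\lnot(B\gg C)$ forces $B=\emptyset$, reducing to the previous case. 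So assume $A,B,C$ are all non-trivial and, negating the second disjunct of the conclusion, assume $\lnot(B\gg C)$, i.e. either $C\gg B$ or $B\approx C$. When $C\gg B$, the transitivity half of the dominance lemma applied to $A\gg C\gg B$ gives $A\gg B$ at once.

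The real content is the subcase $B\approx C$ with all three events non-trivial, and this is where I expect the main obstacle. The idea is to pass to the common superevent $D:=A\cup B\cup C$ and to compute the nullity of each event at the single reference $D$, using the Nullity theorem~\ref{nullity} and the transfer lemma stating that $X\subseteq Y\subseteq Z$ with $Y$ non null at $Z$ and $X$ null at $Z$ gives $X$ null at $Y$. Concretely: from $A\gg C$ the event $C$ is null at $A\cup C$, hence null at $D$ by the third part of~\ref{nullity}; then I would show $B$ is null at $D$ as well, arguing by contradiction, for if $B$ were non null at $D$ then $B\cup C$ would be non null at $D$, forcing $C$ to be null at $B\cup C$ and contradicting $B\approx C$ (which gives $C$ non null at $B\cup C$). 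With $B$ and $C$ both null at $D$, the second part of~\ref{nullity} yields $B\cup C$ null at $D$, and since $D\setminus A\subseteq B\cup C$ the event $A$ must be non null at $D$ (otherwise $A$ and $D\setminus A$ are both null at $D$, forcing $D$ null at $D$, i.e. $D=\emptyset$).

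Finally I would transfer these facts back down to $A\cup B$ to read off $A\gg B$. Since $A$ is non null at $D$, the superevent $A\cup B$ is non null at $D$; then $B\subseteq A\cup B\subseteq D$ with $A\cup B$ non null at $D$ and $B$ null at $D$ gives $B$ null at $A\cup B$ by the transfer lemma, while $A$ is non null at $A\cup B$ because null at $A\cup B$ would propagate up to null at $D$ by~\ref{nullity}, contradicting $A$ non null at $D$. Together these say precisely $A\gg B$, completing the negative transitivity and hence the transitivity of $\approx$. The delicate point throughout is that nullity is always relative to a reference event, so the whole argument must be organized around the fixed reference $D$ together with the two directions of transfer between $D$ and its subevents $A\cup C$, $B\cup C$, and $A\cup B$; keeping track of which transfers require the intermediate event to be non null is the crux.
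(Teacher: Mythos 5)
Your proof is correct, but it is genuinely different from the paper's, which offers no argument at all and simply defers to Hausner (1954). The comparison is worth making explicit: the paper's preceding dominance lemma establishes only that $\gg$ is irreflexive and transitive, and these two properties alone do \emph{not} imply that the incomparability relation $\approx$ is transitive (a strict partial order with a single comparable pair already gives a counterexample), so the citation is covering a real gap. What you supply is exactly the missing ingredient, namely negative transitivity of $\gg$: that $A\gg C$ forces $A\gg B$ or $B\gg C$ for every $B$. Your reduction to the single hard subcase ($A,B,C$ nontrivial, $B\approx C$), and the bookkeeping at the common reference event $D=A\cup B\cup C$ using the three parts of Theorem \ref{nullity} together with the transfer lemma (null at $D$ plus the intermediate event non null at $D$ gives null at the intermediate event) and the characterization of $\gg$ and $\approx$ via nullity at the union, all check out; the only step left slightly implicit is that $B\cup C$ null at $D$ requires first passing to the disjoint pieces $B$ and $C\setminus B$ before invoking the second part of \ref{nullity}, which is immediate from the first part. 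The payoff of your route is a self-contained verification inside the paper's own nullity calculus, where the paper instead relies on Hausner's abstract treatment of non-Archimedean orderings; the cost is about a page of case analysis that the citation was meant to avoid.
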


\begin{lemma}
On $\Sigma/\approx $, $\gg $ is irreflexive, transitive and total.

\begin{proof}
Trivial.
\end{proof}
\end{lemma}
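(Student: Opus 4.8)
The plan is to lift the non-Archimedean order $\gg$ from $\Sigma$ to the quotient $\Sigma/\approx$ by declaring $[A]\gg[B]$ precisely when $A\gg B$ for (any) representatives. With this definition, irreflexivity and totality on the quotient will be essentially reformulations of the definition of $\approx$, whereas well-definedness and transitivity will be the only substantive points; both rest on the dominance lemma (irreflexivity and transitivity of $\gg$ on $\Sigma$) together with the fact that $\approx$ is an equivalence relation.

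First I would record totality. For two distinct classes I pick representatives $A,B$; distinctness means $\lnot(A\approx B)$, so by the very definition of $\approx$ either $A\gg B$ or $B\gg A$, i.e.\ $[A]\gg[B]$ or $[B]\gg[A]$. Irreflexivity is equally direct: if $A\approx A'$ then by definition $\lnot(A\gg A')$, so no class can dominate itself, regardless of which representatives are chosen.

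The heart of the argument is well-definedness, which I would split into two replacement claims: (a) $A\gg B$ and $A\approx A'$ imply $A'\gg B$, and (b) $A\gg B$ and $B\approx B'$ imply $A\gg B'$; composing them handles simultaneous replacement of both representatives. For (a), suppose $\lnot(A'\gg B)$. By totality on $\Sigma$ either $B\gg A'$, whence $A\gg B\gg A'$ forces $A\gg A'$ by transitivity, contradicting $A\approx A'$; or $A'\approx B$, whence $A\approx A'$ and $A'\approx B$ force $A\approx B$ by transitivity of $\approx$, contradicting $A\gg B$. Claim (b) is symmetric: $\lnot(A\gg B')$ gives either $B'\gg A$, so $B'\gg A\gg B$ yields $B'\gg B$ contradicting $B\approx B'$, or $A\approx B'$, which with $B'\approx B$ yields $A\approx B$ contradicting $A\gg B$.

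Finally, transitivity on the quotient follows at once: from $[A]\gg[B]$ and $[B]\gg[C]$ I take a common representative $B$ (legitimate by well-definedness), obtain $A\gg B$ and $B\gg C$, and conclude $A\gg C$ by the dominance lemma, i.e.\ $[A]\gg[C]$. The step I expect to be the main obstacle is well-definedness, since it is where one must rule out that changing representatives inside a class flips the order; the case analysis above shows it reduces entirely to transitivity of the two relations $\gg$ and $\approx$ on $\Sigma$, which is presumably why the author regards the statement as immediate.
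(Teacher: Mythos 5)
Your proposal is correct and fills in exactly the details the paper leaves out with its one-word proof ``Trivial'': the induced order on classes, its well-definedness via transitivity of $\gg$ and of $\approx$, and the immediate reformulations giving irreflexivity and totality. All the ingredients you invoke (the dominance lemma and the fact that $\approx$ is an equivalence relation) are established in the paper just before this statement, so your argument is precisely the intended one.
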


\begin{lemma}[weak comparative probability - extended event independence]
Given $A,B,C,D\in \Sigma$ such that $A,B\subseteq C,D$, $C\approx D\gg
\emptyset $, 
\begin{equation*}
B\geq _{C}A\Longleftrightarrow B\geq _{C\cup D}A\Longleftrightarrow B\geq
_{D}A.
\end{equation*}

\begin{proof}
By weak comparative probability - event independence, using $A,B\subseteq
C,D\subseteq C\cup D$.
\end{proof}
\end{lemma}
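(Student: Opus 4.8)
The plan is to reduce the claim to two applications of the earlier weak comparative probability -- event independence lemma, using $C\cup D$ as the common ambient event. First I would record the purely set-theoretic observation that the hypotheses $A,B\subseteq C$ and $A,B\subseteq D$ give $A,B\subseteq C\cap D\subseteq C\cup D$, so in particular $A,B\subseteq C\subseteq C\cup D$ and $A,B\subseteq D\subseteq C\cup D$. The only substantive input the event independence lemma then needs is that both $C$ and $D$ are non null at $C\cup D$.

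To secure that input I would unpack the hypothesis $C\approx D\gg\emptyset$. The clause $D\gg\emptyset$ records that $D$ is a nontrivial event, and since $C\approx D$ the event $C$ is nontrivial as well; then $C\approx D$ lets me invoke the earlier characterization that two nontrivial events with $C\approx D$ are both non null at $C\cup D$. This is exactly the non-nullity condition demanded by the event independence lemma, with $C\cup D$ in the role of the larger event.

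With $A,B\subseteq C\subseteq C\cup D$ and $C$ non null at $C\cup D$, the event independence lemma yields $B\geq_{C}A\Leftrightarrow B\geq_{C\cup D}A$; applying it a second time with $A,B\subseteq D\subseteq C\cup D$ and $D$ non null at $C\cup D$ yields $B\geq_{D}A\Leftrightarrow B\geq_{C\cup D}A$. Chaining these through the common middle term $B\geq_{C\cup D}A$ delivers the triple equivalence $B\geq_{C}A\Leftrightarrow B\geq_{C\cup D}A\Leftrightarrow B\geq_{D}A$, which is the assertion.

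I do not expect any genuine analytic obstacle here: the entire content is the translation of $C\approx D\gg\emptyset$ into the statement that $C$ and $D$ are non null at their union. The one point requiring care is the notational clash, since the symbol $D$ appears both in the present statement and in the formulation of the event independence lemma; in each of the two applications it is $C\cup D$, not the present $D$, that plays the role of the ambient event in that earlier lemma.
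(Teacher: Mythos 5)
Your proposal is correct and follows the same route as the paper's own (very terse) proof: two applications of the weak comparative probability -- event independence lemma with $C\cup D$ as the ambient event, where the non-nullity of $C$ and $D$ at $C\cup D$ comes from the characterization of $\approx$ for nontrivial events. You have merely spelled out the details the paper leaves implicit.
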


The lemma above defines the relation $\geq _{C\cup D}$ as the unique
qualitative probability at $C$ and $D$ such that $C\approx D\gg \emptyset $.
I.e., there is a unique (in the sense above) qualitative probability on each
non trivial equivalence class in $\Sigma/\approx $.

The discussion above shows that an agent would consider non null, at least,
each event of the equivalence class of a non empty event $A$, if $A$ was
relevant for his/her decision making.

\newpage

\section{Proof Sketch of the Representation Theorem}

\subsection{SEU Representation}

\begin{remark}
I will give the proof for simple acts (simple lotteries in \cite{Sav1954}).
The general case demands Uniform Monotonicity.
\end{remark}

\begin{definition}[$L_{A}^{f}$]
For each $A\in \Sigma\backslash \left\{ \emptyset \right\} $ and simple
act $f$, define a simple lottery on $O$ as $L_{A}^{f}=P_{A}\circ \left.
f\right\vert _{A}^{-1}$.
\end{definition}

In short, it is proved that $f\sim _{A}g$ \ if $L_{A}^{f}=L_{A}^{g}$ for
each couple of simple acts $f$ and $g$. The space of simple lotteries on $O$
is endowed with a weak order $\geqq _{A}$ defined as $L_{A}^{f}\geqq
_{A}L_{A}^{g}$ iff $f\succsim _{A}g$, for each couple of simple acts on $A$, 
$f$ and $g$.  It is proved that this lottery space endowed with $\geqq _{A}$ satisfies independence and Archimedean properties. The theorem 8.2 (page 107, \cite{Fish1971}) provides the representation. The Bernoulli index is the same for every $A\in \Sigma \backslash \left\{ \emptyset \right\} $ if $O$ is a mixture space
satisfying weak forms of the independence and Archimedean properties,
otherwise, each class has a Bernoulli index that preserves ordering for
constant acts only.

\begin{definition}[non redundancy]
Given a simple lottery $\sum_{k=1}^{m}q_{k}1_{o_{k}}$, for some finite $%
m\geq 1$, where $1_{o_{k}}\left( o\right) =\left\{ 
\begin{array}{c}
1,o=o_{k} \\ 
0,o\neq o_{k}%
\end{array}%
\right. $, $\sum_{k=1}^{m}q_{k}=1$, $q_{k}\geq 0$ for $k=1,...,m$, its non
redundant representation is $\sum_{k=1}^{n}p_{k}1_{o_{k}}$, for some finite $%
n\geq 1$, where $n\leq m$, $\sum_{k=1}^{n}p_{k}=1$, $p_{k}>0$ for $k=1,...,n 
$, and $o_{k}=o_{l}\Longrightarrow k=l$.
\end{definition}

\begin{lemma}
\label{FLTA}For each $A\in \Sigma\backslash \left\{ \emptyset \right\} $
and simple lottery $L=\sum_{k=1}^{n}p_{k}1_{o_{k}}$ (non redundancy), there
exist a simple act $f$ such that $L_{A}^{f}=L$.

\begin{proof}
For each $A\in \Sigma\backslash \left\{ \emptyset \right\} $ there is a 
$P_{A}$ such that, for each $B\subseteq A$ and $\lambda \in \left[ 0,1\right]
$, there is an event $C\subseteq B$ satifying $P_{A}\left( C\right) =\lambda
P_{A}\left( B\right) $. Given that, choose $A_{1}\subseteq A$ such that $%
P_{A}\left( A_{1}\right) =p_{1}P_{A}\left( A\right) $, $A_{2}\subseteq
A\sm A_{1}$ such that $P_{A}\left( A_{2}\right) =\frac{p_{2}}{1-p_{1}}%
P_{A}\left( A\sm A_{1}\right) $, $A_{3}\subseteq A\sm \left( A_{1}\cup
A_{2}\right) $ such that $P_{A}\left( A_{3}\right) =\frac{p_{3}}{%
1-p_{1}-p_{2}}P_{A}\left( A\sm \left( A_{1}\cup A_{2}\right) \right) $, and so
forth. Defining $f=o_{1}A_{1}...o_{n}A_{n}h$, where $h$ is an arbitrary
simple act, a simple act satisfying $L_{A}^{f}=L$ is obtained.
\end{proof}
\end{lemma}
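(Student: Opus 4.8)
The plan is to build $f$ by carving $A$ into $n$ disjoint measurable pieces $A_1,\dots,A_n$ with $P_A(A_k)=p_k$ and then declaring $f\equiv o_k$ on $A_k$. The single tool this requires is the convex-valuedness of $P_A$ established earlier, namely that any measurable $B\subseteq A$ admits a subset carrying any prescribed proportion $\lambda\in[0,1]$ of its $P_A$-mass.

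First I would note that, since $P_A$ is a probability on $A$, we have $P_A(A)=1$, matching $\sum_{k=1}^n p_k=1$. I then proceed inductively. Apply convex-valuedness to $B=A$ with $\lambda=p_1$ to get $A_1\subseteq A$ with $P_A(A_1)=p_1$. Suppose disjoint $A_1,\dots,A_{k-1}$ with $P_A(A_j)=p_j$ have been chosen; the residual set $R_{k-1}:=A\setminus\bigcup_{j<k}A_j$ satisfies $P_A(R_{k-1})=1-\sum_{j<k}p_j=\sum_{j\ge k}p_j$, which is strictly positive because each weight in the non-redundant form is positive. Applying convex-valuedness to $B=R_{k-1}$ with $\lambda=p_k/P_A(R_{k-1})$ then yields $A_k\subseteq R_{k-1}$ with $P_A(A_k)=p_k$. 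At stage $n$ the fraction equals $1$, so $A_n$ can simply be taken to be $R_{n-1}$, and $A_1,\dots,A_n$ partition $A$.

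Finally I would set $f=o_1A_1o_2A_2\cdots o_nA_n h$ for an arbitrary simple act $h$, whose sole purpose is to extend $f$ to $S\setminus A$ and which does not enter $L_A^f$. Because $A_1,\dots,A_n$ partition $A$ and, by non-redundancy, the outcomes $o_1,\dots,o_n$ are pairwise distinct, the fibers satisfy $f|_A^{-1}(o_k)=A_k$, so $L_A^f(o_k)=P_A(A_k)=p_k$, and hence $L_A^f=\sum_{k=1}^n p_k 1_{o_k}=L$.

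There is no deep obstacle here: essentially all the force lies in convex-valuedness, which I take as given. The two points to watch are that each normalizing fraction $p_k/P_A(R_{k-1})$ lies in $(0,1]$—this is exactly where $p_k>0$ and $\sum_j p_j=1$ are used—and that non-redundancy is indispensable, since coincident outcomes would merge the fibers $f|_A^{-1}(o_k)$ and break the identity $L_A^f=L$.
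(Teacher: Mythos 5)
Your proposal is correct and follows essentially the same route as the paper: both inductively apply convex-valuedness of $P_{A}$ to the successive residual sets to extract disjoint $A_{k}$ with $P_{A}\left( A_{k}\right) =p_{k}$, and then define $f=o_{1}A_{1}...o_{n}A_{n}h$. Your additional remarks on the normalizing fractions lying in $\left( 0,1\right]$ and on the role of non redundancy in keeping the fibers distinct are details the paper leaves implicit, but the argument is the same.
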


\begin{lemma}
\label{BLC}For each $A,B\in \Sigma$, $B\subseteq A$, $B$ non null at $A$%
, simple lotteries $L=\sum_{k=1}^{n}p_{k}1_{o_{k}}$ and $L^{\prime
}=\sum_{k=1}^{n^{\prime }}p_{k}^{\prime }1_{o_{k}^{\prime }}$ (non
redundancy), simple acts $f$ and $g$ such that $L_{B}^{f}=L$ and $%
L_{B}^{g}=L^{\prime }$, and $\lambda \in \left( 0,1\right] $, there is $%
C\subseteq B$ such that $P_{A}\left( C\right) =\lambda P_{A}\left( B\right) $%
, $L_{C}^{f}=L$ and $L_{C}^{g}=L^{\prime }$.

\begin{proof}
Take $C=\bigcup\limits_{\substack{ i=1,...,n  \\ j=1,...,n^{\prime }}}%
C_{ij} $ such that $C_{i,j}\subseteq f^{-1}\left( o_{i}\right) \cap
g^{-1}\left( o_{j}^{\prime }\right) \cap B$ and $P_{A}\left( C_{i,j}\right)
=\lambda P_{A}\left( f^{-1}\left( o_{i}\right) \cap g^{-1}\left(
o_{j}^{\prime }\right) \cap B\right) $. It is straightforward that $%
P_{A}\left( C\right) =\lambda P_{A}\left( B\right) $, $L_{C}^{f}=L$ and $%
L_{C}^{g}=L^{\prime }$.
\end{proof}
\end{lemma}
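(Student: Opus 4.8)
The plan is to exploit the convex-valuedness (non-atomicity) of $P_{A}$ established above, which guarantees that for any subevent $D\subseteq A$ and any $\mu \in \left[ 0,1\right] $ there is a subevent of $D$ of $P_{A}$-measure exactly $\mu P_{A}\left( D\right) $. The idea is to refine $B$ into the common refinement of the level sets of $f$ and of $g$, shrink each cell of this refinement by the \emph{same} factor $\lambda $, and reassemble. Scaling on the joint partition, rather than on the level sets of $f$ or of $g$ separately, is exactly what will let us preserve both induced lotteries simultaneously.

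First I would form the finite $\Sigma -$measurable partition of $B$ given by the joint level sets
\[
B_{ij}=f^{-1}\left( o_{i}\right) \cap g^{-1}\left( o_{j}^{\prime }\right) \cap B,\qquad i=1,\dots ,n,\ j=1,\dots ,n^{\prime },
\]
so that $\bigcup _{j}B_{ij}=f^{-1}\left( o_{i}\right) \cap B$ and $\bigcup _{i}B_{ij}=g^{-1}\left( o_{j}^{\prime }\right) \cap B$. Because $L_{B}^{f}=L$ and $L_{B}^{g}=L^{\prime }$ are non-redundant and $P_{B}$ is $P_{A}$ conditioned on $B$ ($B$ non null at $A$), we have $P_{A}\left( f^{-1}\left( o_{i}\right) \cap B\right) =p_{i}P_{A}\left( B\right) $ and $P_{A}\left( g^{-1}\left( o_{j}^{\prime }\right) \cap B\right) =p_{j}^{\prime }P_{A}\left( B\right) $. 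By convex-valuedness I would choose, for each pair $\left( i,j\right) $, a subevent $C_{ij}\subseteq B_{ij}$ with $P_{A}\left( C_{ij}\right) =\lambda P_{A}\left( B_{ij}\right) $ (for empty or null cells simply take $C_{ij}=\varnothing $), and set $C=\bigcup _{i,j}C_{ij}$.

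Next I would verify the three conclusions. Since the $C_{ij}$ are pairwise disjoint, finite additivity gives $P_{A}\left( C\right) =\sum _{i,j}P_{A}\left( C_{ij}\right) =\lambda \sum _{i,j}P_{A}\left( B_{ij}\right) =\lambda P_{A}\left( B\right) $; in particular $C$ is non null at $A$ (as $\lambda >0$ and $B$ is non null at $A$), so $P_{C}$ is well defined and equals $P_{A}\left( \cdot \right) /P_{A}\left( C\right) $ by the uniqueness and conditioning relations already proved. For the $f$-marginal, $C\cap f^{-1}\left( o_{i}\right) =\bigcup _{j}C_{ij}$, whence
\[
P_{C}\left( f^{-1}\left( o_{i}\right) \right) =\frac{\sum _{j}P_{A}\left( C_{ij}\right) }{P_{A}\left( C\right) }=\frac{\lambda P_{A}\left( f^{-1}\left( o_{i}\right) \cap B\right) }{\lambda P_{A}\left( B\right) }=p_{i},
\]
so $L_{C}^{f}=L$; the symmetric computation using $C\cap g^{-1}\left( o_{j}^{\prime }\right) =\bigcup _{i}C_{ij}$ gives $L_{C}^{g}=L^{\prime }$.

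The main obstacle is precisely the simultaneous preservation of both marginals, and it is resolved by the decision to apply the single factor $\lambda $ cell-by-cell on the common refinement: summing the scaled cells over either index recovers $\lambda $ times the corresponding marginal of $f$ or of $g$, so both conditional distributions are left unchanged. Had I scaled only the level sets of $f$, I would have lost all control of the $g$-distribution. The remaining points are routine: the treatment of null or empty cells (where convex-valuedness is trivial and $C_{ij}=\varnothing $ suffices), the boundary case $\lambda =1$ (take $C=B$), and the identification of $P_{C}$ with the conditional $P_{A}\left( \cdot \right) /P_{A}\left( C\right) $, which follows from $C$ being non null at $A$ together with the earlier uniqueness of the representing finitely additive probability.
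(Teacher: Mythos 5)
Your proposal is correct and takes essentially the same route as the paper: the paper's proof also scales each cell $C_{ij}\subseteq f^{-1}\left( o_{i}\right) \cap g^{-1}\left( o_{j}^{\prime }\right) \cap B$ of the common refinement by the single factor $\lambda$ via convex-valuedness of $P_{A}$ and takes the union. You have merely spelled out the additivity computations that the paper labels ``straightforward.''
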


\begin{lemma}
\label{CBL}For each $A\in \Sigma\backslash \left\{ \emptyset \right\} $
and simple acts $f$ and $g$, $f\sim _{A}g$ \ if $L_{A}^{f}=L_{A}^{g}$.

\begin{proof}
For each $A\in \Sigma\backslash \left\{ \emptyset \right\} $ and simple
act $f$, define a simple lottery as $L_{A}^{f}$ $=P_{A}\circ f^{-1}$, which
can be represented by $\sum_{k=1}^{n}p_{k}1_{o_{k}}$(non redundancy).

\begin{claim}
For each $A\in \Sigma\backslash \left\{ \emptyset \right\} $ and each
couple of simple acts $f$ and $g$, $f\sim _{A}g$ if $%
L_{A}^{f}=L_{A}^{g}=1_{o}$.

\begin{proof}
Observe that $f=oBf$ and $g=oCg$, where $B,C\subseteq A$ and $P_{A}\left(
B\right) =P_{A}\left( C\right) =1$. It is straightforward that $P_{A}\left(
B\cap C\right) =1$, so, $A\sm \left( B\cap C\right) $ is null at $A$,
consequently, $\succsim _{A}$ and $\succsim _{B\cap C}$ agree. By definition
of $\succsim _{B\cap C}$, $f\sim _{B\cap C}g$, thus, for $n=1$ the result
follows.
\end{proof}
\end{claim}

As induction hypothesis, assume that, for each $A\in \Sigma\backslash
\left\{ \emptyset \right\} $, each couple of simple acts $f$ and $g$, and
each $m<n$, $f\sim _{A}g$ \ if $L_{A}^{f}=L_{A}^{g}=%
\sum_{k=1}^{m}p_{k}1_{o_{k}}$.

If $L_{A}^{f}=L_{A}^{g}=\sum_{k=1}^{n}p_{k}1_{o_{k}}$ for two simple acts $f$
and $g$, by the definition of $\succsim _{A}$, there are $%
A_{i},B_{i}\subseteq A$ and $P_{A}\left( B_{i}\right) =P_{A}\left(
C_{i}\right) =p_{i}$ for each $i=1,...,n$, and a simple act $h$, such that 
\begin{eqnarray*}
f &\sim &_{A}o_{1}B_{1}...o_{n}B_{n}h \\
g &\sim &_{A}o_{1}C_{1}...o_{n}C_{n}h.
\end{eqnarray*}

Now, take $D=C_{1}\cap B_{n}$ and $E\subseteq C_{n}\sm B_{n}$ such that $%
P_{A}\left( E\right) =P_{A}\left( D\right) $, and define the simple act 
\begin{equation*}
k=o_{1}Eo_{n}Dg.
\end{equation*}

As $g\sim _{A\sm \left( D\cup E\right) }k$, if $D$ is null at $A$ then, by
nullity, $D\cup E$ is null at $A$, $\succsim _{A}$ agree with $\succsim
_{A\sm \left( D\cup E\right) }$, consequently, $g\sim _{A}k$.

However, if $D$ is non null at $A$ then, there are three possibilities:

(1). $o_{n}\sim _{S}o_{1}$, so, by eventwise monotonicity and the definition
of $\succsim _{E}$ and $\succsim _{D}$, 
\begin{eqnarray*}
o_{n}Eo_{1} &\sim &_{E}o_{n}\sim _{E}o_{1}\sim _{E}o_{n}Do_{1} \\
o_{n}Do_{1} &\sim &_{D}o_{n}\sim _{D}o_{1}\sim _{D}o_{n}Eo_{1}
\end{eqnarray*}
so, by sure-thing consistency, 
\begin{equation*}
o_{n}Eo_{1}\sim _{D\cup E}o_{n}Do_{1},
\end{equation*}%
and, given that $D$ is non null at $A$, by order-preserving lemma%
\begin{equation*}
g=\left( o_{n}Eo_{1}\right) \left( D\cup E\right) g\sim _{A}\left(
o_{n}Do_{1}\right) \left( D\cup E\right) g=k.
\end{equation*}

(2). $o_{n}\succ _{S}o_{1}$, so, 
\begin{equation*}
o_{n}Eo_{1}\sim _{D\cup E}g\succ _{D\cup E}k\sim _{D\cup
E}o_{n}Do_{1}\Rightarrow E>_{D\cup E}D,
\end{equation*}%
and, given that $D$ is non null at $A$, by weak comparative probability -
event independence, 
\begin{equation*}
E>_{A}D,
\end{equation*}%
an absurd. Analogously for $k\succ _{D\cup E}g$, consequently, $k\sim
_{D\cup E}g$, so, given that $D$ is non null at $A$, by order-preserving
lemma%
\begin{equation*}
k=k\left( D\cup E\right) g\sim _{A}g\left( D\cup E\right) g=g.
\end{equation*}

(3). $o_{1}\succ _{S}o_{n}$, it is analogous to (2).

Thus, $g\sim _{A}k$ in each possible case. However, repeating this process
for $D^{\prime }=C_{2}\cap B_{n}$ and $E^{\prime }\subseteq \left(
C_{n}\sm B_{n}\right) \sm E$ such that $P_{A}\left( E^{\prime }\right)
=P_{A}\left( D^{\prime }\right) $, and defining the simple act 
\begin{equation*}
k^{\prime }=o_{2}E^{\prime }o_{n}D^{\prime }k,
\end{equation*}%
it is obtained that $k\sim _{A}k^{\prime }$. So, after finite steps, it is
obtained the simple act%
\begin{equation*}
r\sim _{A}o_{1}D_{1}...o_{n-1}D_{n-1}o_{n}B_{n}h,
\end{equation*}%
which, by construction, is equivalent to $g$ at $A$, and, by induction
hypothesis and the definition of $\succsim _{A\sm B_{n}}$ and $\succsim
_{B_{n}} $, satisfies%
\begin{eqnarray*}
r &\sim &_{A\sm B_{n}}f \\
r &\sim &_{B_{n}}f,
\end{eqnarray*}%
so, by sure-thing consistency, 
\begin{equation*}
r\sim _{A}f\text{.}
\end{equation*}
\end{proof}
\end{lemma}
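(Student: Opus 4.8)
The plan is to induct on the number $n$ of outcomes in the non-redundant representation $L_A^f = L_A^g = \sum_{k=1}^n p_k 1_{o_k}$. First I would rewrite each act through its $P_A$-level sets: setting $B_i = f^{-1}(o_i) \cap A$ and $C_i = g^{-1}(o_i) \cap A$, and absorbing into a null remainder any outcomes carrying $P_A$-probability zero (by the Nullity theorem), one obtains $f \sim_A o_1 B_1 \cdots o_n B_n h$ and $g \sim_A o_1 C_1 \cdots o_n C_n h$ with $P_A(B_i) = P_A(C_i) = p_i$. It therefore suffices to compare these two level-set forms under $\succsim_A$. For $n = 1$ both acts return $o_1$ on a set of $P_A$-probability one, so the agreement set $B_1 \cap C_1$ has probability one; hence $A \setminus (B_1 \cap C_1)$ is null at $A$ and, by the definition of a null event, $\succsim_A$ agrees with $\succsim_{B_1 \cap C_1}$, on which the two acts literally coincide.

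For the inductive step I would realign $g$ with $f$ one outcome-block at a time. To force $g$ to deliver $o_n$ exactly on $B_n$, take $D \subseteq C_1 \cap B_n$ (where $g$ currently returns $o_1$ while $f$ returns $o_n$) and, by convex-valuedness of $P_A$, choose $E \subseteq C_n \setminus B_n$ with $P_A(E) = P_A(D)$, then interchange the outcomes placed on $D$ and on $E$. The crux is the claim that any swap of two outcomes across disjoint events of equal $P_A$-measure preserves $\sim_A$: when the two outcomes are $\sim_S$-indifferent this is immediate from eventwise monotonicity; otherwise, were the swap to produce a strict change, the order-preserving lemma (\ref{lemma 4}) together with weak comparative probability would yield $E >_A D$ or $D >_A E$, contradicting $P_A(E) = P_A(D)$ and the uniqueness of $P_A$ as the only measure strictly agreeing with $\geq_A$ (and if $D$ is null at $A$ the swap is indifferent directly by the Nullity theorem).

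Iterating finitely many such elementary swaps transforms $g$, up to $\sim_A$, into an act agreeing with $f$ on $B_n$ while realizing the residual lottery $\sum_{k=1}^{n-1} \tfrac{p_k}{1 - p_n} 1_{o_k}$ on $A \setminus B_n$. The sure-thing principle ($P2\frac{1}{2}$) then factors the comparison into the coincident part on $B_n$ and a residual $(n-1)$-outcome comparison on $A \setminus B_n$, which is closed by the induction hypothesis.

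I expect the main obstacle to be this equal-probability swap lemma together with its bookkeeping: one must verify that each elementary interchange is exactly $P_A$-measure-preserving, treat separately the degenerate case where a swap event is null at $A$, and check that the finite sequence of swaps genuinely terminates with the $o_n$-block matched to $B_n$. It is precisely the strict-agreement uniqueness of $P_A$ that prevents any strict preference from surviving an equal-probability swap, so that the entire realignment remains within $\sim_A$.
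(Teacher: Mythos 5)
Your proposal is correct and follows essentially the same route as the paper: induction on the number of outcomes, the same $n=1$ base case via the null complement of $B\cap C$, the same equal-$P_{A}$-measure outcome swaps (with the identical three-way case split into null swap event, $\sim_{S}$-indifferent outcomes, and the strict case refuted through the qualitative probability strictly agreeing with $P_{A}$), and the same closing move of splitting along $B_{n}$ by sure-thing consistency to invoke the induction hypothesis on the residual $(n-1)$-outcome lottery. No substantive divergence from the paper's argument.
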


The lemma above has proved that, for each simple lottery $L$, each $A\in 
\Sigma\backslash \left\{ \emptyset \right\} $ and each couple of simple
acts $f$ and $g$, 
\begin{equation*}
L_{A}^{f}=L_{A}^{g}=L\Longrightarrow f\sim _{A}g.
\end{equation*}

\begin{definition}[$\geqq _{A}$]
For each each $A\in \Sigma\backslash \left\{ \emptyset \right\} $ and
lotteries $L$ and $L^{\prime }$, $L\geqq _{A}L^{\prime }$if there are acts $%
f $ and $g$, such that, $L_{A}^{f}=L$, $L_{A}^{g}=L^{\prime }$ and $%
f\succsim _{A}g$. ($\equiv _{A}$ and $\gneqq _{A}$ are defined as usual)
\end{definition}

As it is shown above, each simple lottery is generated by a simple act.
Besides, it is shown above that two simple acts that generate the same
lottery are equivalent. Thus, if a couple of simple acts $f$ and $g$ satisfy 
$f\succsim _{A}g$, $L_{A}^{f}=L$ and $L_{A}^{g}=L^{\prime }$, then each
couple of simple acts $h$ and $k$ satisfying $L_{A}^{h}=L$ and $%
L_{A}^{k}=L^{\prime }$ must satisfy $h\succsim _{A}k$. I.e. $\geqq _{A}$ is
well defined. Moreover, $\geqq _{A}$ is a weak preference.

\begin{lemma}[sure-thing consistency for lotteries]
For each $A,B\in \Sigma$ such that $B\subseteq A$, and lotteries $L_{1}$
and $L_{2}$,%
\begin{equation*}
L_{1}\geqq _{B}L_{2}\text{ and }L_{1}\geqq _{A\sm B}L_{2}\Longrightarrow
L_{1}\geqq _{A}L_{2}
\end{equation*}%
and%
\begin{equation*}
B\text{ is non null at }A\Longrightarrow \left( L_{1}\gneqq _{B}L_{2}\text{
and }L_{1}\geqq _{A\sm B}L_{2}\Longrightarrow L_{1}\gneqq _{A}L_{2}\right) .
\end{equation*}

\begin{proof}
By lemmas \ref{FLTA}, \ref{BLC}, \ref{CBL} and sure-thing consistency, the
result follows.
\end{proof}
\end{lemma}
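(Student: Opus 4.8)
The plan is to reduce the lottery-level statement to the act-level sure-thing consistency ($P2\frac{1}{2}$) that is already in hand. The difficulty is that $\geqq_B$, $\geqq_{A\sm B}$ and $\geqq_A$ are each defined through possibly different witnessing acts and, more importantly, through the three distinct conditional measures $P_B$, $P_{A\sm B}$ and $P_A$. The bridge I would exploit is the mixing identity: for any simple act $f$,
\[
L_A^f = P_A(B)\,L_B^f + P_A(A\sm B)\,L_{A\sm B}^f ,
\]
which follows termwise from the conditional relation $P_A(D)=P_B(D)P_A(B)$ for $D\subseteq B$ (and its analogue on $A\sm B$) established at the end of Appendix A. In particular, an act that induces $L_1$ simultaneously on $B$ and on $A\sm B$ automatically induces $L_1$ on $A$, whether or not $B$ is null at $A$ (if it is, $P_A(B)=0$ and the $B$-term drops out).

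Concretely, assume first that both $B$ and $A\sm B$ are nonempty. Using Lemma \ref{FLTA} on $B$ and on $A\sm B$ separately, I would choose acts $f_B,f_{A\sm B}$ with $L_B^{f_B}=L_1$ and $L_{A\sm B}^{f_{A\sm B}}=L_1$ and set $f=f_B B f_{A\sm B}$ (arbitrary off $A$); then $L_B^f=L_1$, $L_{A\sm B}^f=L_1$, and by the identity $L_A^f=L_1$. Building $g$ the same way from the witnesses for $L_2$ gives $L_B^g=L_{A\sm B}^g=L_2$ and $L_A^g=L_2$. This simultaneous realization of the two lotteries on the two pieces is exactly what the construction in Lemmas \ref{FLTA} and \ref{BLC} provides.

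Next I would transfer the preference. Let $(f_1,g_1)$ witness $L_1\geqq_B L_2$, i.e. $L_B^{f_1}=L_1$, $L_B^{g_1}=L_2$ and $f_1\succsim_B g_1$. Since $L_B^f=L_B^{f_1}$ and $L_B^g=L_B^{g_1}$, Lemma \ref{CBL} gives $f\sim_B f_1$ and $g\sim_B g_1$, whence $f\succsim_B g$ by transitivity; the same argument on $A\sm B$ yields $f\succsim_{A\sm B}g$. Sure-thing consistency $P2\frac{1}{2}$ then gives $f\succsim_A g$, and because $L_A^f=L_1$ and $L_A^g=L_2$ we conclude $L_1\geqq_A L_2$. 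For the strict clause, $L_1\gneqq_B L_2$ yields $f\succ_B g$ (again via Lemma \ref{CBL}), and combined with $f\succsim_{A\sm B}g$ and the hypothesis that $B$ is non null at $A$, the second part of $P2\frac{1}{2}$ delivers $f\succ_A g$, i.e. $L_1\gneqq_A L_2$. The degenerate cases are immediate: if $B=\emptyset$ then $A\sm B=A$ and the conclusion is the second hypothesis, while if $A\sm B=\emptyset$ then $B=A$ and both the weak and the strict conclusions coincide with the respective $B$-hypotheses; note the strict clause forces $B$, hence $A$, nonempty.

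I expect the only genuine obstacle to be the passage from three separate lottery preorders to a single act comparison, and this is precisely what the two ingredients resolve: the mixing identity lets one glued act realize $L_1$ (resp. $L_2$) at all three events at once, and Lemma \ref{CBL} makes these glued acts interchangeable with the original witnesses, so that act-level sure-thing consistency finishes the argument. Everything else is bookkeeping of the conditional-probability relations and the empty-event boundary cases.
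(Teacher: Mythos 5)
Your argument is correct and is exactly the route the paper intends: its one-line proof simply cites Lemmas \ref{FLTA}, \ref{BLC}, \ref{CBL} and act-level sure-thing consistency, and your write-up supplies precisely the gluing construction, the mixing identity, and the Lemma \ref{CBL} transfer step that make that citation work. No substantive difference.
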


\begin{lemma}[order-preserving for lotteries]
For each $A,B\in \Sigma$ such that $B\subseteq A$, $B$ is non null at $%
A $, and lotteries $L$ and $L^{\prime }$, 
\begin{equation*}
L\geqq _{B}L^{\prime }
\end{equation*}
\begin{equation*}
\Leftrightarrow
\end{equation*}
\begin{equation*}
 P_{A}\left( B\right) L+\left(
1-P_{A}\left( B\right) \right) L^{\prime \prime }\geqq _{A}P_{A}\left(
B\right) L^{\prime }+\left( 1-P_{A}\left( B\right) \right) L^{\prime \prime }%
\text{ for each }L^{\prime \prime }\text{.}
\end{equation*}

\begin{proof}
Take acts $f$ and $g$ such that $L_{B}^{f}=L$ and $L_{B}^{g}=L^{\prime }$.
Observe that, for each act $h$, $L_{A}^{fBh}=P_{A}\left( B\right) L+\left(
1-P_{A}\left( B\right) \right) L_{A\sm B}^{h}$ and $L_{A}^{gBh}=P_{A}\left(
B\right) L^{\prime }+\left( 1-P_{A}\left( B\right) \right) L_{A\sm B}^{h}$.
Now, by the order-preserving lemma, and using lemmas \ref{FLTA} and \ref{CBL}%
, the result follows.
\end{proof}
\end{lemma}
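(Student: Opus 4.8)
The plan is to prove both implications by transporting each preference statement into a statement about lottery mixtures via the lottery-generating acts of Lemma~\ref{FLTA}, and then closing with the order-preserving Lemma~\ref{lemma 4}. First I would fix, once and for all, acts $f$ and $g$ with $L_B^f=L$ and $L_B^g=L'$, which exist by Lemma~\ref{FLTA}. (If $B=A$ then $P_A(B)=1$ and the claim collapses to $L\geqq_A L'\Leftrightarrow L\geqq_B L'$, which is immediate since $\geqq_A$ and $\geqq_B$ coincide; if $A\sm B$ is null at $A$ the coefficient $1-P_A(B)$ vanishes and $\succsim_A$ agrees with $\succsim_B$, so I would dispatch these degenerate cases at the outset and assume $A\sm B$ is non null at $A$.) The computational heart is the decomposition
\[
L_A^{fBh}=P_A(B)\,L+\bigl(1-P_A(B)\bigr)\,L_{A\sm B}^{h},\qquad L_A^{gBh}=P_A(B)\,L'+\bigl(1-P_A(B)\bigr)\,L_{A\sm B}^{h},
\]
which I would verify outcome by outcome: for each $o$, $L_A^{fBh}(o)=P_A\bigl(f^{-1}(o)\cap B\bigr)+P_A\bigl(h^{-1}(o)\cap(A\sm B)\bigr)$, and then rewrite the first summand as $P_A(B)L(o)$ using the conditioning identity $P_A(C)=P_B(C)P_A(B)$ for $C\subseteq B$ (Theorem~\ref{Theo002}) and the second as $(1-P_A(B))L_{A\sm B}^{h}(o)$ by its $A\sm B$-analogue.

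For the forward implication, suppose $L\geqq_B L'$, so $f\succsim_B g$ by well-definedness of $\geqq_B$. Since $B$ is non null at $A$, Lemma~\ref{lemma 4} yields $fBh\succsim_A gBh$ for every act $h$. Given an arbitrary lottery $L''$, I would use Lemma~\ref{FLTA} on $A\sm B$ to choose $h$ with $L_{A\sm B}^{h}=L''$; the decomposition then exhibits $fBh$ and $gBh$ as acts generating the two mixtures, so $fBh\succsim_A gBh$ is exactly the desired inequality $P_A(B)L+(1-P_A(B))L''\geqq_A P_A(B)L'+(1-P_A(B))L''$.

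For the converse I would run the same correspondence backwards, and this is where I expect the only real friction. Assuming the mixture inequality for every $L''$, I pick an $L''$ with a realizing $h$; well-definedness of $\geqq_A$ then gives $fBh\succsim_A gBh$. To invoke the backward direction of Lemma~\ref{lemma 4} I need $fBh\succsim_A gBh$ for \emph{every} act $h$, not merely for the representatives produced by Lemma~\ref{FLTA}. The obstacle is therefore to show that the relation $fBh\succsim_A gBh$ depends on $h$ only through $L_{A\sm B}^{h}$: I would argue that if $L_{A\sm B}^{h}=L_{A\sm B}^{h'}$ then $h\sim_{A\sm B}h'$ by Lemma~\ref{CBL}, whence $fBh\sim_{A\sm B}fBh'$ (both depend on $h,h'$ only through their values on $A\sm B$, by $P1\frac{1}{2}$), while $fBh\sim_B fBh'$ trivially since the two acts agree on $B$; combining the bipartition $A=B\sqcup(A\sm B)$ via $P2\frac{1}{2}$ gives $fBh\sim_A fBh'$, and likewise for $g$. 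Since every lottery on $A\sm B$ is realized by some $h$ and the inequality holds for all of them, I obtain $fBh\succsim_A gBh$ for every $h$, and Lemma~\ref{lemma 4} returns $f\succsim_B g$, i.e.\ $L\geqq_B L'$.
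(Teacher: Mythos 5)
Your proposal is correct and follows essentially the same route as the paper's (much terser) proof: the same choice of acts $f,g$ via Lemma~\ref{FLTA}, the same decomposition $L_A^{fBh}=P_A(B)L+(1-P_A(B))L_{A\sm B}^{h}$, and the same closing appeal to the order-preserving Lemma~\ref{lemma 4}, with your careful argument that $fBh\succsim_A gBh$ depends on $h$ only through $L_{A\sm B}^{h}$ being precisely the role the paper assigns to Lemma~\ref{CBL}. (The only remark worth adding is that the converse can also be closed more cheaply: by completeness of $\succsim_B$ and the strict forward part of Lemma~\ref{lemma 4}, a single $h$ with $fBh\succsim_A gBh$ already forces $f\succsim_B g$ when $B$ is non null at $A$.)
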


Observe that, by order-preserving for lotteries, for each $A\in \Sigma$%
, if, for some $B\subseteq A$ such that $B$ non null at $A$, $L\geqq
_{B}L^{\prime }$, then, for each $C\subseteq A$ such that $C=_{A}B$, $L\geqq
_{C}L^{\prime }$. I.e., the ordering between lotteries at some non null
event at $A$ depend of the probability of this event at $A$ and nothing else.

\begin{lemma}
\label{TUL}For each $A,B,C\in \Sigma$ such that $B,C\subseteq A$, $%
B=_{A}C>_{A}\emptyset $, and lotteries $L_{1}$ and $L_{2}$, $L_{1}\geqq
_{B}L_{2}\Leftrightarrow L_{1}\geqq _{C}L_{2}$.

\begin{proof}
By order-preserving for lotteries lemma, 
\begin{equation*}
L_{1}\geqq _{B}L_{2}
\end{equation*}%
\begin{equation*}
\Leftrightarrow
\end{equation*}%
\begin{equation*}
P_{A}\left( B\right) L_{1}+\left( 1-P_{A}\left( B\right) \right) L_{3}\geqq
_{A}P_{A}\left( B\right) L_{2}+\left( 1-P_{A}\left( B\right) \right) L_{3}%
\text{ for each }L_{3}
\end{equation*}%
\begin{equation*}
\Leftrightarrow
\end{equation*}%
\begin{equation*}
P_{A}\left( C\right) L_{1}+\left( 1-P_{A}\left( C\right) \right) L_{3}\geqq
_{A}P_{A}\left( C\right) L_{2}+\left( 1-P_{A}\left( C\right) \right) L_{3}%
\text{ for each }L_{3}
\end{equation*}%
\begin{equation*}
\Leftrightarrow
\end{equation*}%
\begin{equation*}
L_{1}\geqq _{C}L_{2}.
\end{equation*}
\end{proof}
\end{lemma}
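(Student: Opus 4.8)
The plan is to reduce the purely qualitative hypothesis $B =_A C >_A \emptyset$ to a quantitative statement about the probability $P_A$, and then to apply the order-preserving for lotteries lemma twice, once at $B$ and once at $C$. First I would record that, because the finitely additive probability $P_A$ strictly agrees with the qualitative probability $\geq_A$, the relation $B =_A C$ is equivalent to $P_A(B) = P_A(C)$. Moreover $C >_A \emptyset$ forces $P_A(C) > 0$, and since $\geq_A$ is a weak order, $B =_A C >_A \emptyset$ also yields $B >_A \emptyset$, hence $P_A(B) = P_A(C) > 0$. In particular both $B$ and $C$ are non null at $A$, which is exactly the hypothesis needed to invoke the order-preserving for lotteries lemma at each of them.

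Then I would chain three equivalences. Applying the order-preserving for lotteries lemma at $B$, the relation $L_1 \geqq_B L_2$ holds if and only if
\[
P_A(B) L_1 + (1 - P_A(B)) L_3 \geqq_A P_A(B) L_2 + (1 - P_A(B)) L_3 \quad \text{for every lottery } L_3 .
\]
Since $P_A(B) = P_A(C)$, this condition is word-for-word the one obtained by replacing $P_A(B)$ with $P_A(C)$. Applying the order-preserving for lotteries lemma at $C$ then shows that this latter condition is equivalent to $L_1 \geqq_C L_2$. Concatenating gives $L_1 \geqq_B L_2 \Leftrightarrow L_1 \geqq_C L_2$.

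The only delicate point is the first step: the translation between the qualitative comparison $=_A$ and equality of $P_A$-values, and between $>_A \emptyset$ and non-nullity at $A$. Both rest on the strict agreement of $P_A$ with $\geq_A$ (obtained above from the uniqueness of the representing measure in Savage's framework) together with the earlier identification of null events at $A$ with events that are $=_A \emptyset$. Once these correspondences are in place the rest is a one-line substitution, because the order-preserving for lotteries lemma has already carried out all the structural work; indeed, the statement merely formalizes the remark made immediately after that lemma, namely that the ranking of lotteries at a non null subevent of $A$ depends on nothing beyond the $P_A$-probability of that subevent.
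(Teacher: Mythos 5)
Your proposal is correct and follows essentially the same route as the paper: both apply the order-preserving for lotteries lemma at $B$ and at $C$ and link the two displays via $P_A(B)=P_A(C)$. The only difference is that you spell out the justification for $P_A(B)=P_A(C)>0$ (strict agreement of $P_A$ with $\geq_A$ and non-nullity of $B$ and $C$ at $A$), which the paper leaves implicit in its chain of equivalences.
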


\begin{lemma}[small event continuity for lotteries]
For each each $A\in \Sigma\backslash \left\{ \emptyset \right\} $, and
lotteries $L_{1}$,$L_{2}$ and $L_{3}$ such that $L_{1}\gneqq _{A}L_{2}$ and $%
L_{3}$ constant, there exists a finite partition of $A$, $\left\{
A_{k}\right\} _{k=1}^{n}\subseteq \Sigma$, such that 
\begin{equation*}
L_{1}\gneqq _{A}\left( 1-P_{A}\left( A_{k}\right) \right) L_{2}+P_{A}\left(
A_{k}\right) L_{3}
\end{equation*}%
and 
\begin{equation*}
\left( 1-P_{A}\left( A_{k}\right) \right) L_{1}+P_{A}\left( A_{k}\right)
L_{3}\gneqq _{A}L_{2},
\end{equation*}%
for each $k=1,...,n$.

\begin{proof}
By lemma \ref{FLTA} and small event continuity, the result follows.
\end{proof}
\end{lemma}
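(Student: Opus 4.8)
The plan is to realize the three lotteries by acts and then push Axiom $P6\frac{1}{2}$ through the dictionary between acts and lotteries supplied by Lemmas \ref{FLTA}, \ref{BLC} and \ref{CBL}. Since $L_{3}$ is constant, write $L_{3}=1_{o_{3}}$ and let $h\equiv o_{3}$, so that $L_{C}^{h}=L_{3}$ for every non-null $C\subseteq A$. By Lemma \ref{FLTA} choose acts $f$ and $g$ with $L_{A}^{f}=L_{1}$ and $L_{A}^{g}=L_{2}$; because $L_{1}\gneqq_{A}L_{2}$, the definition of $\geqq_{A}$ (well defined by Lemma \ref{CBL}) gives $f\succ_{A}g$. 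Applying $P6\frac{1}{2}$ to the triple $(f,g,h)$ yields a finite $\Sigma$-measurable partition $\{A_{k}\}_{k=1}^{n}$ of $A$ with $f\succ_{A}hA_{k}g$ and $hA_{k}f\succ_{A}g$ for each $k$.

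It then remains to identify the lotteries generated by the perturbed acts. For each $k$ one has $L_{A}^{hA_{k}g}=P_{A}(A_{k})L_{3}+(1-P_{A}(A_{k}))L_{A\sm A_{k}}^{g}$ and $L_{A}^{hA_{k}f}=P_{A}(A_{k})L_{3}+(1-P_{A}(A_{k}))L_{A\sm A_{k}}^{f}$. I would arrange that $L_{A\sm A_{k}}^{g}=L_{2}$ and $L_{A\sm A_{k}}^{f}=L_{1}$, so that these become the convex combinations $(1-P_{A}(A_{k}))L_{2}+P_{A}(A_{k})L_{3}$ and $(1-P_{A}(A_{k}))L_{1}+P_{A}(A_{k})L_{3}$ of the statement; reading $f\succ_{A}hA_{k}g$ and $hA_{k}f\succ_{A}g$ back through the definition of $\geqq_{A}$ then delivers exactly the two displayed strict inequalities.

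The main obstacle is precisely this normalization: the partition produced by $P6\frac{1}{2}$ depends on $f$ and $g$, so there is no reason for the conditional lotteries $L_{A\sm A_{k}}^{g}$ to equal $L_{2}$, and no single act can be uniform on \emph{every} sub-event. I would close the gap in one of two ways. The cheap route bypasses the specific $P6\frac{1}{2}$ partition and argues at the level of lotteries: since $\geqq_{A}$ satisfies the independence and Archimedean properties, the maps $p\mapsto(1-p)L_{2}+pL_{3}$ and $p\mapsto(1-p)L_{1}+pL_{3}$ are mixture-continuous, so there is a threshold $\bar p>0$ with $L_{1}\gneqq_{A}(1-p)L_{2}+pL_{3}$ and $(1-p)L_{1}+pL_{3}\gneqq_{A}L_{2}$ for all $p\in[0,\bar p)$, and the convex-valuedness of $P_{A}$ then furnishes a finite partition $\{A_{k}\}$ with every $P_{A}(A_{k})<\bar p$. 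The route staying closer to $P6\frac{1}{2}$ is to build $f$, $g$ and the partition together, using Lemma \ref{BLC} and convex-valuedness to give each cell the prescribed conditional lotteries; this is the delicate step, because refining a $P6\frac{1}{2}$ cell need not preserve the strict preferences when $L_{3}$ lies above $L_{1}$ or below $L_{2}$, which is exactly why both the lemma and $P6\frac{1}{2}$ must carry the two inequalities simultaneously.
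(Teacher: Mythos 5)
Your overall strategy --- realize $L_{1},L_{2},L_{3}$ by acts via Lemma \ref{FLTA}, apply $P6\frac{1}{2}$ to the resulting triple, and read the conclusion back through the definition of $\geqq _{A}$ --- is exactly the paper's proof, which consists of the single sentence ``by Lemma \ref{FLTA} and small event continuity, the result follows.'' You have, moreover, correctly identified the step that this one-liner suppresses: the partition $\{A_{k}\}$ produced by $P6\frac{1}{2}$ depends on the chosen representatives $f$ and $g$, so $L_{A}^{hA_{k}g}=P_{A}(A_{k})L_{3}+(1-P_{A}(A_{k}))L_{A\sm A_{k}}^{g}$ with $L_{A\sm A_{k}}^{g}$ in general different from $L_{2}$, and the displayed mixtures of the statement are not literally the lotteries of the perturbed acts. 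That gap is genuine and the paper does not address it.

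Of your two repairs, only the second is compatible with the paper's architecture. The ``cheap route'' through the independence and Archimedean properties of $\geqq _{A}$ is out of order: both are established only \emph{after} this lemma in the appendix, and the proofs of the Independence and Archimedean lemmas themselves invoke small event continuity, so you would at best have to reorganize the entire development and at worst argue in a circle. The second route is the right one and can be carried out with the tools already available: for a non-null cell $A_{k}$ (null cells give $P_{A}(A_{k})=0$ and the claims reduce to $L_{1}\gneqq _{A}L_{2}$), use Lemma \ref{BLC} and convex-valuedness of $P_{A}$ to replace $A_{k}$ by an event $C_{k}$ with $P_{A}(C_{k})=P_{A}(A_{k})$, $L_{A\sm C_{k}}^{f}=L_{1}$ and $L_{A\sm C_{k}}^{g}=L_{2}$, so that $hC_{k}g$ realizes exactly $(1-P_{A}(A_{k}))L_{2}+P_{A}(A_{k})L_{3}$; then compare $hC_{k}g$ with $hA_{k}g$ by eventwise monotonicity and sure-thing consistency, splitting according to whether $o_{3}$ dominates or is dominated by the outcomes of $g$ on $A$ (in the latter case the first inequality already follows from $(1-p)L_{2}+pL_{3}\leqq _{A}L_{2}\lneqq _{A}L_{1}$ for every $p$, without any appeal to the $P6\frac{1}{2}$ partition), and symmetrically for the second inequality with $f$ in place of $g$. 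As written your proposal stops one step short of a complete argument, but it is an accurate diagnosis of exactly what the paper's own proof omits, and the second route closes it.
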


Observe that versions of sure-thing consistency and small event continuity
for lotteries are simpler than the original versions.

\begin{lemma}[Independence]
For each each $A\in \Sigma\backslash \left\{ \emptyset \right\} $, each 
$\rho \in \left( 0,1\right] $ and each pair of lotteries $L_{1}$ and $L_{2}$%
, 
\begin{equation*}
L_{1}\geqq _{A}L_{2}\Leftrightarrow \rho L_{1}+\left( 1-\rho \right)
L_{3}\geqq _{A}\rho L_{2}+\left( 1-\rho \right) L_{3}\text{ for each }L_{3}.
\end{equation*}

\begin{proof}
First, observe that the lemma can be rewritten as: for each $A,B\in \mathcal{%
S}$ such that $B\subseteq A$ and $B$ is non null at $A$, $\geqq _{A}$ and $%
\geqq _{B}$ agree.

Given $A,B\in \Sigma$ such that $B\subseteq A$ and $B$ is non null at $%
A $, and lotteries $L_{1}$ and $L_{2}$, by lemma \ref{TUL}, whether $%
L_{1}\geqq _{A}L_{2}\Longleftrightarrow L_{1}\geqq _{B}L_{2}$ is true is
independent of $B$, although it is dependent on $P_{A}\left( B\right) $.
Thus, henceforth, the notation $B\left( \alpha \right) $\ is used for an
incognito event in $A$\ with $P_{A}\left( B\left( \alpha \right) \right)
=\alpha $\ .

Given the discussion above, it shall be proved that, for each $\alpha >0$, $%
L_{1}$ and $L_{2}$, $L_{1}\geqq _{A}L_{2}\Longleftrightarrow L_{1}\geqq
_{B\left( \alpha \right) }L_{2}$.

If, for each $\alpha >0$, $L_{1}\equiv _{B\left( \alpha \right) }L_{2}$,
there is nothing to prove, because $\succsim _{A}$ and $\succsim _{B\left(
1\right) }$ agree, consequently, for each acts $f$ and $g$ such that $%
L_{A}^{f}=L_{1}$ and $L_{A}^{g}=L_{2}$, it follows that $L_{B\left( 1\right)
}^{f}=L_{1}$, $L_{B\left( 1\right) }^{g}=L_{2}$ and $f\sim _{B\left(
1\right) }g$, so, $f\sim _{A}g$ and $L_{1}\equiv _{A}L_{2}$. Thus,
henceforth, it is assumed that there exists $\alpha _{0}>0$ such that $%
L_{1}\lneqq _{B\left( \alpha _{0}\right) }L_{2}$. ($\gneqq $ is analogous)

If $\alpha +\beta \leq 1$, and $B\left( \alpha \right) $ and $B\left( \beta
\right) $ are such that $L_{1}\geqq _{B\left( \alpha \right) }L_{2}$ and $%
L_{1}\geqq _{B\left( \beta \right) }L_{2}$ then, by lemma \ref{TUL}, they
can be taken disjoint and, by lemma \ref{FLTA}, there are acts $f$ and $g$
such that $L_{B\left( \alpha \right) }^{f}=L_{B\left( \beta \right)
}^{f}=L_{1}$ and $L_{B\left( \alpha \right) }^{g}=L_{B\left( \beta \right)
}^{g}=L_{2}$, which imply that $f\succsim _{B\left( \alpha \right) }g$ and $%
f\succsim _{B\left( \beta \right) }g$, and by sure-thing consistency, $%
f\succsim _{B\left( \alpha \right) \sqcup B\left( \beta \right) }g$ . As, by
construction, $L_{B\left( \alpha \right) \sqcup B\left( \beta \right)
}^{f}=L_{1}$, $L_{B\left( \alpha \right) \sqcup B\left( \beta \right)
}^{g}=L_{2}$ and $B\left( \alpha \right) \sqcup B\left( \beta \right) $ is
an event $B\left( \alpha +\beta \right) $, $L_{1}\geqq _{B\left( \alpha
+\beta \right) }L_{2}$.

If $B\left( \alpha \right) $ is such that $L_{1}\geqq _{B\left( \alpha
\right) }L_{2}$ then, taken a $n$-fold uniform partition of $B\left( \alpha
\right) $, $\left\{ B_{j}\left( \frac{\alpha }{n}\right) \right\} _{j=1}^{n}$%
, by lemma \ref{FLTA}, there are acts $f$ and $g$ such that $L_{B_{j}\left( 
\frac{\alpha }{n}\right) }^{f}=L_{1}$ and $L_{B_{j}\left( \frac{\alpha }{n}%
\right) }^{g}=L_{2}$ for each $j=1,...,n$, consequently, $L_{B\left( \alpha
\right) }^{f}=L_{1}$, $L_{B\left( \alpha \right) }^{g}=L_{2}$, $f\succsim
_{B\left( \alpha \right) }g$ and, by sure-thing consistency, $f\succsim
_{B_{j}\left( \frac{\alpha }{n}\right) }g$ \ for some $j$, i.e.$L_{1}\geqq
_{B\left( \frac{\alpha }{n}\right) }L_{2}$.

Both paragraphs above and $L_{1}\lneqq _{B\left( \alpha _{0}\right) }L_{2}$
imply that, 
\begin{equation*}
L_{1}\lneqq _{B\left( q\alpha _{0}\right) }L_{2}\text{ for each }q\in 
\mathbb{Q}
\cap \left( 0,\frac{1}{\alpha _{0}}\right] .
\end{equation*}

As $L_{1}$ and $L_{2}$ are simple lotteries and $L_{1}\lneqq _{B\left(
\alpha _{0}\right) }L_{2}$, taking the most preferred outcome in $\left\{
w:L_{1}\left( w\right) +L_{2}\left( w\right) >0\right\} $, $v$, by lemma \ref%
{FLTA}, there are acts $f$ and $g$ such that $L_{B\left( \alpha _{0}\right)
}^{f}=L_{1}$ and $L_{B\left( \alpha _{0}\right) }^{g}=L_{2}$, and by small
event continuity, given the constant act $v$, there is a finite partition $%
\left\{ B_{j}\left( \alpha _{0j}\right) \right\} _{j=1}^{n}$ of $B\left(
\alpha _{0}\right) $ such that%
\begin{equation*}
vB_{j}\left( \alpha _{0j}\right) f\prec _{B\left( \alpha _{0}\right) }g
\end{equation*}%
for each $j=1,...,n$.

Observe that, by definition of $v$, using sure-thing consistency if it is
needed, $v\succsim _{B_{j}\left( \alpha _{0j}\right) }g$ for each $j=1,...,n$%
, and, by sure-thing consistency again, $v\succsim _{B\left( \alpha
_{0}\right) }g$. Besides, for each $j$, $\alpha _{0j}<\alpha _{0}$,
otherwise, $P_{B\left( \alpha _{0}\right) }\left( B_{j}\left( \alpha
_{0j}\right) \right) =1$, consequently, $\succsim _{B\left( \alpha
_{0}\right) }$ and $\succsim _{B_{j}\left( \alpha _{0j}\right) }$ agree, and 
$v\prec _{B_{j}\left( \alpha _{0j}\right) }g$, an absurd. Moreover, for some 
$j$, $\alpha _{0j}>0$, otherwise, $\alpha _{0}=0$, an absurd. Thus, given a $%
j$ such that $0<\alpha _{0j}<\alpha _{0}$, given the definition of $v$, if $%
f\succsim _{B\left( \alpha _{0}\right) \sm B_{j}\left( \alpha _{0j}\right) }g$
then $vB_{j}\left( \alpha _{0j}\right) f\succsim _{B\left( \alpha
_{0}\right) \sm B_{j}\left( \alpha _{0j}\right) }g$ . As $vB_{j}\left( \alpha
_{0j}\right) f\succsim _{B_{j}\left( \alpha _{0j}\right) }g$, by sure-thing
consistency, $vB_{j}\left( \alpha _{0j}\right) f\succsim _{B\left( \alpha
_{0}\right) }g$, an absurd, i.e. 
\begin{equation*}
f\prec _{B\left( \alpha _{0}\right) \sm B_{j}\left( \alpha _{0j}\right) }g.
\end{equation*}

Now, take any $\beta \in \left( 0,\alpha _{0j}\right) $. So, for each $%
B\left( \beta \right) \subseteq B_{j}\left( \alpha _{0j}\right) $, given the
definition of $v$, using sure-thing consistency if it is needed, $v\succsim
_{B\left( \beta \right) }g$ and $v\succsim _{B_{j}\left( \alpha _{0j}\right)
\sm B\left( \beta \right) }f$ , so, $vB\left( \beta \right) f\succsim _{B\left(
\beta \right) }g$ and $vB_{j}\left( \alpha _{0j}\right) f\succsim
_{B_{j}\left( \alpha _{0j}\right) \sm B\left( \beta \right) }vB\left( \beta
\right) f$, consequently, as $vB_{j}\left( \alpha _{0j}\right) f\sim
_{B\left( \alpha _{0}\right) \sm B_{j}\left( \alpha _{0j}\right) }vB\left(
\beta \right) f$, by sure-thing consistency, $vB_{j}\left( \alpha
_{0j}\right) f\succsim _{B\left( \alpha _{0}\right) \sm B\left( \beta \right)
}vB\left( \beta \right) f$. Now, if $vB_{j}\left( \alpha _{0j}\right)
f$ $\succsim _{B\left( \alpha _{0}\right) \sm B\left( \beta \right) }g$ then, as $%
vB_{j}\left( \alpha _{0j}\right) f\succsim _{B\left( \beta \right) }g$, by
sure-thing consistency, $vB_{j}\left( \alpha _{0j}\right) f\succsim
_{B\left( \alpha _{0}\right) }g$, an absurd, consequently,%
\begin{equation*}
vB\left( \beta \right) f\prec _{B\left( \alpha _{0}\right) \sm B\left( \beta
\right) }vB_{j}\left( \alpha _{0j}\right) f\prec _{B\left( \alpha
_{0}\right) \sm B\left( \beta \right) }g,
\end{equation*}%
i.e.%
\begin{equation*}
f\prec _{B\left( \alpha _{0}\right) \sm B\left( \beta \right) }g.
\end{equation*}

From what was shown above, if $0<\beta <\alpha _{0j}<\alpha _{0}$ and $%
L_{1}\lneqq _{B\left( \alpha _{0}\right) }L_{2}$, then, by lemma \ref{FLTA}
and lemma \ref{BLC}, 
\begin{equation*}
L_{1}\lneqq _{B\left( \alpha _{0}-\beta \right) }L_{2}
\end{equation*}

The steps above imply that, for each $\beta \in \left( 0,\alpha _{0j}\right) 
$, 
\begin{equation*}
L_{1}\lneqq _{B\left( q\left( \alpha _{0}-\beta \right) \right) }L_{2}\text{
for each }q\in 
\mathbb{Q}
\cap \left( 0,\frac{1}{\alpha _{0}-\beta }\right] ,
\end{equation*}%
consequently,%
\begin{equation*}
L_{1}\lneqq _{B\left( x\right) }L_{2}\text{ for each }x\in \left( 0,1\right]
,
\end{equation*}%
implying that $L_{1}\lneqq _{A}L_{2}$.
\end{proof}
\end{lemma}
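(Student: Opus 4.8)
The plan is to first collapse the mixture statement into a statement about sub-events. By the order-preserving lemma for lotteries, the right-hand side ``$\rho L_1+(1-\rho)L_3\geqq_A\rho L_2+(1-\rho)L_3$ for each $L_3$'' is precisely the assertion $L_1\geqq_B L_2$ for an event $B\subseteq A$ with $P_A(B)=\rho$; by Lemma \ref{TUL} the truth of this depends only on $\rho=P_A(B)$ and not on the chosen $B$, while convex-valuedness of $P_A$ guarantees such a $B$ exists for every $\rho\in(0,1]$. Writing $B(\alpha)$ for a generic event in $A$ of $P_A$-measure $\alpha$ (with $B(1)=A$), the lemma is equivalent to: for all $\alpha\in(0,1]$ and all lotteries, $L_1\geqq_A L_2\Leftrightarrow L_1\geqq_{B(\alpha)}L_2$. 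Since $\geqq_A$ is complete it suffices to prove the strict version, and by symmetry I may assume $L_1\lneqq_{B(\alpha_0)}L_2$ for some $\alpha_0>0$ (the wholly-indifferent case being trivial) and show this forces $L_1\lneqq_{B(x)}L_2$ for every $x\in(0,1]$.

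Two arithmetic steps then follow from sure-thing consistency for lotteries together with Lemmas \ref{FLTA} and \ref{BLC}, which realize prescribed lotteries on disjoint sub-events. \emph{Additivity:} if $L_1\geqq_{B(\alpha)}L_2$ and $L_1\geqq_{B(\beta)}L_2$ with $\alpha+\beta\le1$, then taking disjoint representatives and a single pair of acts realizing $L_1,L_2$ on both gives $L_1\geqq_{B(\alpha+\beta)}L_2$. \emph{Divisibility:} an $n$-fold uniform partition of $B(\alpha)$ shows $L_1\geqq_{B(\alpha)}L_2\Rightarrow L_1\geqq_{B(\alpha/n)}L_2$, since if the preference were reversed on every cell, sure-thing consistency would reverse it on $B(\alpha)$. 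Taking contrapositives in the strict case, from $L_1\lneqq_{B(\alpha_0)}L_2$ I obtain $L_1\lneqq_{B(\alpha_0/n)}L_2$ for each $n$, and re-adding cells yields $L_1\lneqq_{B(q\alpha_0)}L_2$ for every $q\in\mathbb{Q}\cap(0,1/\alpha_0]$.

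The main obstacle is that $\{q\alpha_0:q\in\mathbb{Q}\}$, though dense, need not contain $1$ when $\alpha_0$ is irrational, so the rational grid cannot by itself deliver the value at $\alpha=1$. To bridge this I would consume the Archimedean content of the axioms via small event continuity for lotteries. Fixing acts $f,g$ with $L_{B(\alpha_0)}^f=L_1$, $L_{B(\alpha_0)}^g=L_2$ and a constant act $v$ whose outcome is most preferred among those in the supports of $L_1,L_2$, small event continuity produces a finite partition $\{B_j(\alpha_{0j})\}$ of $B(\alpha_0)$ with $vB_j(\alpha_{0j})f\prec_{B(\alpha_0)}g$ for each $j$. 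A counting argument forces some cell with $0<\alpha_{0j}<\alpha_0$, and repeated one-cell substitutions controlled by sure-thing consistency (exploiting that $v$ is most preferred, so no substitution can flip the strict preference) show that deleting any $B(\beta)\subseteq B_j(\alpha_{0j})$ preserves strictness, i.e.\ $L_1\lneqq_{B(\alpha_0-\beta)}L_2$ for all $\beta\in(0,\alpha_{0j})$. Thus strictness holds not merely on the rational grid but on a whole interval of measures below $\alpha_0$. I expect this step---extracting a genuinely smaller cell and verifying that the substitutions never reverse the preference---to be the delicate part.

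Finally I would combine the interval with the scaling step: applying additivity and divisibility to each base point $\alpha_0-\beta$ in the interval gives $L_1\lneqq_{B(q(\alpha_0-\beta))}L_2$ for every rational $q$, and as $\beta$ sweeps $(0,\alpha_{0j})$ and $q$ ranges over $\mathbb{Q}_{>0}$ the products $q(\alpha_0-\beta)$ exhaust all of $(0,1]$. In particular the case $x=1$ yields $L_1\lneqq_A L_2$, which completes the strict direction; the remaining weak and indifference cases follow by completeness of $\geqq_A$ and the symmetric argument, establishing that $\geqq_{B(\alpha)}$ and $\geqq_A$ agree and hence the stated independence property.
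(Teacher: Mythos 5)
Your proposal is correct and follows essentially the same route as the paper's own proof: the reduction via the order-preserving lemma and Lemma \ref{TUL} to the statement that $\geqq_{B(\alpha)}$ agrees with $\geqq_A$, the additivity and divisibility steps yielding the rational grid $q\alpha_0$, the use of small event continuity with a most-preferred outcome $v$ to extend strictness to an interval $(\,\alpha_0-\alpha_{0j},\alpha_0\,)$ of measures, and the final observation that the products $q(\alpha_0-\beta)$ exhaust $(0,1]$. The step you flag as delicate (the one-cell substitution argument) is indeed where the paper spends most of its effort, but your outline of it matches the paper's argument.
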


\begin{corollary}
\label{ICL}For each $A,B\in \Sigma\backslash \left\{ \emptyset \right\} 
$ such that $B\approx A$, $\geqq _{A}$ and $\geqq _{B}$ agree.

\begin{proof}
Take $A\cup B$ and apply the independence lemma.
\end{proof}
\end{corollary}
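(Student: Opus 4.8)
The plan is to reduce the non-nested situation $A \approx B$ to two applications of the Independence lemma, using the common superevent $A \cup B$ as the ambient event. Recall that the Independence lemma has already been shown to be equivalent to the statement that whenever $B \subseteq A$ and $B$ is non null at $A$, the induced orders $\geqq_A$ and $\geqq_B$ agree; this reformulation is exactly the tool I want to feed.

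First I would translate the relational hypothesis $A \approx B$ into nullity facts. Since $A, B \neq \emptyset$, both are non-trivial events, so the characterization of $\approx$ for non-trivial events (the lemma stated just before the definition of $\approx$) gives that both $A$ and $B$ are non null at $A \cup B$. This is the only structural input needed, and it converts ``$A \approx B$'' into precisely the two non-nullity hypotheses that the Independence lemma requires.

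Next I would apply the (rewritten) Independence lemma twice, each time with ambient event $A \cup B$. Taking $A \subseteq A \cup B$ with $A$ non null at $A \cup B$ yields that $\geqq_{A \cup B}$ agrees with $\geqq_A$; taking $B \subseteq A \cup B$ with $B$ non null at $A \cup B$ yields that $\geqq_{A \cup B}$ agrees with $\geqq_B$. Since \emph{agreement} of two weak orders is literally equality of the underlying binary relations, it is an equivalence relation and in particular transitive, so chaining the two agreements through the common order $\geqq_{A \cup B}$ gives that $\geqq_A$ agrees with $\geqq_B$, which is the claim.

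There is essentially no obstacle here: all the analytic content, namely the reduction of probabilistic independence to the invariance of the induced lottery order under restriction to a non-null sub-event, is already packaged inside the Independence lemma, and the characterization of $\approx$ supplies exactly the two nullity facts needed to invoke it on both $A$ and $B$ simultaneously. The only point I would flag explicitly, rather than belabor, is the transitivity of agreement, which closes the chain through $\geqq_{A \cup B}$.
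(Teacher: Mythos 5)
Your proposal is correct and follows exactly the route the paper intends with its one-line proof: pass to the superevent $A\cup B$, use the characterization of $\approx$ for non-trivial events to get that both $A$ and $B$ are non null at $A\cup B$, and apply the reformulated Independence lemma twice, closing the chain by transitivity of agreement. You have simply spelled out the details the paper leaves implicit.
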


\begin{lemma}
\label{DBL}For each $A\in \Sigma\backslash \left\{ \emptyset \right\} $%
, if $L_{2}\gneqq _{A}L_{1}$.and $0\leq \rho <\sigma \leq 1$, then $\rho
L_{1}+\left( 1-\rho \right) L_{2}\gneqq _{A}\sigma L_{1}+\left( 1-\sigma
\right) L_{2}$.

\begin{proof}
It follows straightforwardly from the independence lemma (see page 72, \cite%
{Sav1954}). It is just algebra.
\end{proof}
\end{lemma}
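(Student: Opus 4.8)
The plan is to derive the claim from two applications of the Independence lemma, after first observing that this lemma, stated for $\geqq _{A}$, immediately upgrades to a strict version. Since the lemma is a biconditional, for each $\rho \in (0,1]$ and each $L_{3}$ one has $L\gneqq _{A}L^{\prime }$ if and only if $\rho L+(1-\rho )L_{3}\gneqq _{A}\rho L^{\prime }+(1-\rho )L_{3}$: the weak inequality transfers in both directions by the lemma, and strictness is preserved because an indifference between the two mixtures would, via the converse direction of the lemma, force $L\equiv _{A}L^{\prime }$. Writing $M=\rho L_{1}+(1-\rho )L_{2}$ and $N=\sigma L_{1}+(1-\sigma )L_{2}$, the goal is to show $M\gneqq _{A}N$.

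The only genuine computation is to express $N$ as a mixture of $L_{1}$ and $M$. Setting $\alpha =\frac{\sigma -\rho }{1-\rho }$, one checks that $\alpha \in (0,1]$ (the numerator is positive since $\rho <\sigma $, the denominator is positive since $\rho <\sigma \leq 1$ forces $\rho <1$, and $\alpha \leq 1$ is equivalent to $\sigma \leq 1$), and that $\alpha L_{1}+(1-\alpha )M=N$, because the resulting coefficient of $L_{1}$ is $\alpha +(1-\alpha )\rho =\sigma $ and the coefficient of $L_{2}$ is $(1-\alpha )(1-\rho )=1-\sigma $.

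First I would establish the auxiliary strict ranking $M\gneqq _{A}L_{1}$. Applying the strict Independence lemma to the hypothesis $L_{2}\gneqq _{A}L_{1}$ with mixing weight $1-\rho \in (0,1]$ and common lottery $L_{1}$ gives $(1-\rho )L_{2}+\rho L_{1}\gneqq _{A}(1-\rho )L_{1}+\rho L_{1}=L_{1}$, that is, $M\gneqq _{A}L_{1}$. Then I apply the strict Independence lemma once more, to $M\gneqq _{A}L_{1}$, now with mixing weight $\alpha \in (0,1]$ and common lottery $M$: this yields $\alpha M+(1-\alpha )M\gneqq _{A}\alpha L_{1}+(1-\alpha )M$, i.e. $M\gneqq _{A}N$ by the identity of the previous paragraph, which is the desired conclusion.

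No step here is truly hard; the author's remark that ``it is just algebra'' is accurate. The only points to watch are the two degenerate endpoints: when $\rho =0$ the first step reduces to the hypothesis itself, and when $\sigma =1$ one has $\alpha =1$ and $N=L_{1}$, so the conclusion collapses onto the auxiliary ranking $M\gneqq _{A}L_{1}$. The main (minor) obstacle is therefore the bookkeeping, namely confirming that the mixing weights $1-\rho $ and $\alpha $ remain in $(0,1]$ so that the Independence lemma applies, and verifying that strictness genuinely propagates through each of the two applications.
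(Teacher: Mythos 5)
Your proof is correct and takes essentially the same route the paper intends: the paper's proof simply defers to the independence lemma plus algebra (citing Savage, p.~72), and you have spelled out exactly that argument — upgrading independence to its strict form, writing $\sigma L_{1}+(1-\sigma)L_{2}$ as the mixture $\alpha L_{1}+(1-\alpha)M$ with $\alpha=\frac{\sigma-\rho}{1-\rho}$, and applying strict independence twice. The mixture-weight bookkeeping and the treatment of the endpoints $\rho=0$, $\sigma=1$ are all in order.
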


\begin{lemma}[Archimedean]
For each $A\in \Sigma\backslash \left\{ \emptyset \right\} $, if $%
L_{2}\gneqq _{A}L_{1}$ and $L_{2}\geqq _{A}L_{3}\geqq _{A}L_{1}$ for some
fixed lottery $L_{3}$, then there is one and only one $\rho \in \left[ 0,1%
\right] $ such that $L_{3}\equiv _{A}\rho L_{1}+\left( 1-\rho \right) L_{2}$.

\begin{proof}
It follows straightforwardly from lemmas \ref{FLTA} and \ref{DBL}, Dedekind
cut and small event continuity (see page 73, \cite{Sav1954}). The Fishburn's
version is more complete (see page 205, \cite{Fish1971}).
\end{proof}
\end{lemma}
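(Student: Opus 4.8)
The plan is to view the map $f(\rho)=\rho L_{1}+(1-\rho)L_{2}$ as a strictly monotone curve through the lottery space, ordered by $\geqq_{A}$, and to pin $L_{3}$ onto it by a Dedekind cut, using the small event continuity for lotteries to forbid a jump at the cut point. Uniqueness comes for free: since $L_{2}\gneqq_{A}L_{1}$, Lemma \ref{DBL} says that $\rho\mapsto f(\rho)$ is strictly $\geqq_{A}$-decreasing on $[0,1]$, so at most one $\rho$ can satisfy $L_{3}\equiv_{A}f(\rho)$.

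For existence I would set $\rho^{*}=\sup\{\rho\in[0,1]:f(\rho)\geqq_{A}L_{3}\}$. This set is nonempty (it contains $\rho=0$, as $f(0)=L_{2}\geqq_{A}L_{3}$) and bounded, so $\rho^{*}\in[0,1]$ is well defined, and by the strict monotonicity of $f$ the set is an initial segment of $[0,1]$. The goal is to show that neither $f(\rho^{*})\gneqq_{A}L_{3}$ nor $L_{3}\gneqq_{A}f(\rho^{*})$ can hold, which by completeness of $\geqq_{A}$ forces $f(\rho^{*})\equiv_{A}L_{3}$.

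The continuity step is the heart of the argument. Suppose $M:=f(\rho^{*})\gneqq_{A}L_{3}$; then $\rho^{*}<1$ (else $M=L_{1}\gneqq_{A}L_{3}$ contradicts $L_{3}\geqq_{A}L_{1}$). A one-line computation gives $f(\rho^{*}+t(1-\rho^{*}))=(1-t)M+tL_{1}$, so pushing $\rho$ past $\rho^{*}$ is the same as mixing $M$ toward $L_{1}$. Because the supports of $L_{1},L_{2},L_{3}$ are finite, there is a $\succsim_{S}$-worst outcome $\underline{o}$ among them, and by repeated use of the independence lemma every lottery in play is $\geqq_{A}$ the degenerate lottery $1_{\underline{o}}$; in particular $L_{1}\geqq_{A}1_{\underline{o}}$. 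Applying the small event continuity for lotteries to the strict relation $M\gneqq_{A}L_{3}$ with the constant lottery $1_{\underline{o}}$ produces a finite partition $\{A_{k}\}$ of $A$ with $(1-P_{A}(A_{k}))M+P_{A}(A_{k})1_{\underline{o}}\gneqq_{A}L_{3}$ for each $k$; choosing a cell with $t:=P_{A}(A_{k})\in(0,1)$ and upgrading $1_{\underline{o}}$ to the $\geqq_{A}$-better $L_{1}$ via the independence lemma yields $f(\rho^{*}+t(1-\rho^{*}))\gneqq_{A}L_{3}$, contradicting $\rho^{*}=\sup$. The case $L_{3}\gneqq_{A}M$ is symmetric: now $\rho^{*}>0$, one mixes $M$ toward $L_{2}$ (so $f((1-s)\rho^{*})=(1-s)M+sL_{2}$), bounds $L_{2}$ above by $1_{\bar{o}}$ for the $\succsim_{S}$-best outcome $\bar{o}$, and applies small event continuity to $L_{3}\gneqq_{A}M$ to obtain some $\rho'<\rho^{*}$ with $f(\rho')\lneqq_{A}L_{3}$, which by monotonicity forces the supremum to be $\le\rho'<\rho^{*}$, again a contradiction.

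I expect the main obstacle to be exactly this continuity step, because the small event continuity lemma only lets one perturb a strict preference by mixing with a \emph{constant} lottery, whereas the Dedekind cut naturally wants to perturb by mixing with the (possibly non-degenerate) endpoints $L_{1}$ and $L_{2}$. The bridge is to sandwich those endpoints between the best and worst degenerate lotteries on the finite outcome set and let the independence lemma transport the preserved strictness across that replacement; once this is in place, everything else (existence of $\bar{o}$ and $\underline{o}$, the betweenness $1_{\bar{o}}\geqq_{A}L\geqq_{A}1_{\underline{o}}$, and the transfer of constant-act rankings across events via $P3\frac{1}{2}$) is routine.
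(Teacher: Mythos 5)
Your proposal is correct and follows essentially the same route the paper indicates: a Dedekind cut along the mixture curve $\rho\mapsto\rho L_{1}+(1-\rho)L_{2}$, with Lemma \ref{DBL} supplying strict monotonicity (hence uniqueness) and small event continuity for lotteries closing the gap at the cut point, exactly as in the Savage/Fishburn argument the paper cites. Your bridging device of sandwiching $L_{1}$ and $L_{2}$ between the worst and best degenerate lotteries on the finite support and transporting strictness via the independence lemma is a sound way to fill in the detail the paper leaves to the references.
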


\begin{theorem}
\label{FTR}For each $A\in \Sigma\backslash \left\{ \emptyset \right\} $%
, there is a real-valued function $u_{A}$ on $O$ satisfying 
\begin{equation*}
L\geqq _{A}L^{\prime }\Leftrightarrow \sum_{o:L\left( o\right)
>0}u_{A}\left( o\right) L\left( o\right) \geq \sum_{o:L^{\prime }\left(
o\right) >0}u_{A}\left( o\right) L^{\prime }\left( o\right)
\end{equation*}%
for all simple lotteries $L$ and $L^{\prime }$ on $O$. Besides, $u_{A}$ is
unique up to a positive affine transformation.

\begin{proof}
For each $A\in \Sigma\backslash \left\{ \emptyset \right\} $, as $\geqq
_{A}$ is a weak preference satisfying independence and Archimedean
properties, using the theorem 8.2, page 107, \cite{Fish1971}, the result
follows.
\end{proof}
\end{theorem}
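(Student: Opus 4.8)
The plan is to recognize the statement as an instance of the mixture-space (von Neumann--Morgenstern) expected utility theorem, applied to the ordered space of simple lotteries on $O$ equipped with $\geqq_A$. The space of simple lotteries on $O$ is a mixture space under the pointwise convex combination $(\rho L+(1-\rho)L')(o)=\rho L(o)+(1-\rho)L'(o)$, and the three structural hypotheses that such a representation theorem consumes have already been secured in the preceding lemmas: $\geqq_A$ is a weak order (established immediately after the definition of $\geqq_A$), it satisfies Independence (the Independence lemma), and it satisfies the Archimedean property (the Archimedean lemma). So the real work is to package these hypotheses and cite the representation theorem.

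First I would invoke the mixture-space representation theorem (Fishburn's Theorem 8.2) to obtain an affine functional $U$ on the lottery space with $L\geqq_A L'\Leftrightarrow U(L)\geq U(L')$. Next I would exploit the affinity of $U$ together with the fact that every simple lottery is a finite convex combination of degenerate lotteries, $L=\sum_{o:L(o)>0}L(o)\,1_{o}$, to obtain $U(L)=\sum_{o:L(o)>0}L(o)\,U(1_{o})$. Setting $u_A(o):=U(1_{o})$ then reproduces exactly the claimed sum form, and the representing equivalence $L\geqq_A L'\Leftrightarrow \sum_{o:L(o)>0}u_A(o)L(o)\geq \sum_{o:L'(o)>0}u_A(o)L'(o)$ follows at once. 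The only point of care here is that simple lotteries have finite support, so each sum is a genuine finite sum and the passage from $U$ to $u_A$ is purely algebraic.

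For uniqueness I would appeal to the uniqueness clause of the same theorem: any two affine functionals representing the same Independence-and-Archimedean weak order on a mixture space agree up to a transformation $U\mapsto aU+b$ with $a>0$. Restricting such a transformation to the degenerate lotteries $1_{o}$ transfers it directly to $u_A$, so $u_A$ is determined up to a positive affine transformation, as asserted.

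The main obstacle is \emph{not} in this theorem itself, which is essentially a citation plus bookkeeping. The genuinely hard input is the Independence lemma it relies on, whose proof had to build events of prescribed conditional probability via convex-valuedness of $P_A$ and Lemma \ref{FLTA}, disjointify and rescale them (Lemma \ref{TUL}), and then run a rational-scaling argument together with small-event continuity to push a strict comparison from a single conditioning event of probability $\alpha_0$ out to all of $A$. That is where the content lives; the present statement merely harvests it through the standard mixture-space machinery.
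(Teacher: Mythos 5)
Your proposal is correct and follows essentially the same route as the paper: the paper's proof is precisely the observation that $\geqq_A$ is a weak order satisfying independence and the Archimedean property (established in the preceding lemmas) and then an appeal to Fishburn's Theorem 8.2. Your additional bookkeeping --- extracting $u_A$ as the value of the affine functional on degenerate lotteries and transferring the uniqueness clause --- is the standard unpacking of that citation and matches the paper's intent.
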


\begin{lemma}
For each $A,B\in \Sigma\backslash \left\{ \emptyset \right\} $ such
that $B\approx A$, $u_{A}$ and $u_{B}$ are related by a positive affine
transformation.

\begin{proof}
It is straightforward from corollary \ref{ICL}.
\end{proof}
\end{lemma}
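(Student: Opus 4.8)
The plan is to reduce the claim to the uniqueness clause of Theorem \ref{FTR} by way of Corollary \ref{ICL}. The key observation is that the freedom of a positive affine transformation between two expected-utility indices is precisely the freedom one has once the two indices represent the same preference ordering on simple lotteries; so the entire content of the lemma is that $\geqq_A$ and $\geqq_B$ are the same ordering.

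First I would invoke Corollary \ref{ICL}: since $A,B\in\Sigma\setminus\{\emptyset\}$ and $B\approx A$, the corollary gives directly that $\geqq_A$ and $\geqq_B$ agree, that is, for all simple lotteries $L,L'$ one has $L\geqq_A L'\Leftrightarrow L\geqq_B L'$. Next I would recall from Theorem \ref{FTR} that $u_A$ is a real-valued function on $O$ whose induced expected-utility functional represents $\geqq_A$, and likewise $u_B$ represents $\geqq_B$. Because the two orderings coincide, $u_A$ and $u_B$ represent one and the same weak order on simple lotteries.

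Finally I would apply the uniqueness assertion of Theorem \ref{FTR}: the expected-utility index representing a given ordering on simple lotteries is unique up to a positive affine transformation. Having just identified $\geqq_A$ and $\geqq_B$ as the identical ordering, uniqueness forces $u_B=\alpha u_A+\beta$ for some $\alpha>0$ and $\beta\in\mathbb{R}$, which is exactly the asserted conclusion.

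I do not expect any real obstacle here: the substantive work has already been carried out in establishing Corollary \ref{ICL} (which rests on the Independence lemma applied to the event $A\cup B$) and in the uniqueness clause of Theorem \ref{FTR}. The only point deserving a word of care is that the uniqueness in Theorem \ref{FTR} is phrased for a fixed ordering $\geqq_A$; but once $\geqq_A$ and $\geqq_B$ have been identified as a single ordering on lotteries, both $u_A$ and $u_B$ qualify as expected-utility indices for that one ordering, so the uniqueness clause applies to them verbatim.
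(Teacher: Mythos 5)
Your proposal is correct and is exactly the argument the paper intends when it writes that the lemma ``is straightforward from corollary \ref{ICL}'': Corollary \ref{ICL} identifies $\geqq_A$ and $\geqq_B$ as the same ordering, and the uniqueness clause of Theorem \ref{FTR} then forces $u_A$ and $u_B$ to differ by a positive affine transformation. You have merely made explicit the steps the paper leaves implicit.
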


\begin{lemma}
\label{ACL}For the set of constant lotteries, $\geqq _{A}$ and $\geqq _{B}$
agree.for each $A,B\in \Sigma\backslash \left\{ \emptyset \right\} $.

\begin{proof}
It is a straightforward consequence of eventwise monotonicity and the
definition of $\geqq _{A}$ and $\geqq _{B}$.
\end{proof}
\end{lemma}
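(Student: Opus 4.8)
The plan is to reduce the claim to eventwise monotonicity ($P3\frac{1}{2}$) by exploiting the well-definedness of $\geqq_A$. The constant lotteries are exactly the degenerate point masses $1_o$, $o\in O$, and each such $1_o$ is generated by the constant act $f\equiv o$: since $\left.f\right|_A^{-1}(o)=A$ for every non-empty $A$, we have $L_A^f=P_A\circ \left.f\right|_A^{-1}=1_o$. So for any two constant lotteries I can work with the associated constant acts as canonical representatives.

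First I would fix constant lotteries $1_o$ and $1_{o'}$ and set $f\equiv o$, $g\equiv o'$. By the remark following the definition of $\geqq_A$ (two simple acts generating the same lottery at $A$ are $\sim_A$-indifferent, which is Lemma \ref{CBL}), the relation $\geqq_A$ is well defined on lotteries, so it may be tested on any pair of representing acts. In particular, $1_o\geqq_A 1_{o'}$ holds if and only if $f\succsim_A g$, and likewise $1_o\geqq_B 1_{o'}$ holds if and only if $f\succsim_B g$.

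Finally I would invoke eventwise monotonicity. Because $f$ and $g$ are constant and $A,B$ are non-empty, $P3\frac{1}{2}$ gives $f\succsim_A g\Leftrightarrow f\succsim_S g\Leftrightarrow f\succsim_B g$. Chaining the three equivalences yields $1_o\geqq_A 1_{o'}\Leftrightarrow 1_o\geqq_B 1_{o'}$, which is the assertion. There is no substantive obstacle here: the only step needing care is the appeal to well-definedness of $\geqq_A$ to replace arbitrary representing acts by the constant ones, after which $P3\frac{1}{2}$ closes the argument immediately.
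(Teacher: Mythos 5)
Your argument is correct and is exactly the expansion of the paper's one-line proof: you reduce constant lotteries to their constant-act representatives via the well-definedness of $\geqq_A$ (guaranteed by Lemma \ref{CBL}) and then chain through $f\succsim_A g\Leftrightarrow f\succsim_S g\Leftrightarrow f\succsim_B g$ using $P3\frac{1}{2}$. This is the same route the paper takes, just spelled out.
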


\begin{definition}[mixture space]
A mixture space is a $2$-uple $\left( X,m\right) $ with $X$ a non empty set
and $m:\left[ 0,1\right] \times X^{2}\rightarrow X$ such that $m\left(
1,x,y\right) =x$, $m\left( \alpha ,x,y\right) =m\left( 1-\alpha ,y,x\right) $
and $m\left( \alpha ,m\left( \beta ,x,y\right) ,y\right) =m\left( \alpha
\beta ,x,y\right) $.
\end{definition}

\begin{theorem}
\label{GTRMS}For a mixture space $\left( X,m\right) $ and a weak preference $%
\succsim $ on $X$, there exist a unique, up to affine transformations,
real-valued function $v:X\rightarrow 
\mathbb{R}
$ such that 
\begin{equation*}
\forall x,y\in X\left( x\succsim y\Leftrightarrow v\left( x\right) \geq
v\left( y\right) \right)
\end{equation*}%
and%
\begin{equation*}
\forall x,y\in X\forall \alpha \in \left[ 0,1\right] \left( v\left( m\left(
\alpha ,x,y\right) \right) =\alpha v\left( x\right) +\left( 1-\alpha \right)
v\left( y\right) \right) ,
\end{equation*}%
if, and only if,%
\begin{equation}
\forall x,y,z\in X\forall \alpha \in \left( 0,1\right) \left( x\succ
y\Rightarrow m\left( \alpha ,x,z\right) \succ m\left( \alpha ,y,z\right)
\right)  \label{IMS}
\end{equation}%
and%
\begin{equation}
\forall x,y,z\in X\left( x\succ y\succ z\Rightarrow \exists \alpha ,\beta
\in \left( 0,1\right) \left( m\left( \alpha ,x,z\right) \succ y\succ m\left(
\beta ,x,z\right) \right) \right) .  \label{AMS}
\end{equation}

\begin{proof}
See theorem 8.4, page 112, \cite{Fish1971}.
\end{proof}
\end{theorem}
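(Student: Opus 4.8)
The plan is to prove the straightforward \emph{only if} direction by direct substitution and the substantive \emph{if} direction by the classical calibration argument of Herstein--Milnor. For necessity, suppose an affine representation $v$ exists. If $x\succ y$ then $v(x)>v(y)$, so for $\alpha\in(0,1)$ we get $v(m(\alpha,x,z))=\alpha v(x)+(1-\alpha)v(z)>\alpha v(y)+(1-\alpha)v(z)=v(m(\alpha,y,z))$, which is $\left(\ref{IMS}\right)$. If $v(x)>v(y)>v(z)$, then $\alpha v(x)+(1-\alpha)v(z)\to v(x)$ as $\alpha\to 1$ and $\to v(z)$ as $\alpha\to 0$, so by choosing $\alpha$ near $1$ and $\beta$ near $0$ the mixture value straddles $v(y)$, giving $m(\alpha,x,z)\succ y\succ m(\beta,x,z)$, which is $\left(\ref{AMS}\right)$.

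For sufficiency, first dispose of the degenerate case in which $x\sim y$ for all $x,y$, where $v\equiv 0$ works. Otherwise fix a pair $x^{*}\succ x_{*}$ and normalize so that the target values will be $v(x^{*})=1$ and $v(x_{*})=0$. The crux is two monotonicity lemmas drawn purely from $\left(\ref{IMS}\right)$ and the mixture-space identities. First, combining $\left(\ref{IMS}\right)$ with the commutativity axiom and the derived identity $m(\beta,a,a)=a$ yields the betweenness fact that $a\succ b$ implies $a\succ m(\beta,a,b)\succ b$ for $\beta\in(0,1)$; feeding this into the associativity axiom $m(\alpha,m(\beta,x,y),y)=m(\alpha\beta,x,y)$ gives strict monotonicity in the mixing weight: $x\succ y$ and $1\ge\alpha>\beta\ge 0$ force $m(\alpha,x,y)\succ m(\beta,x,y)$. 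Second, applying the contrapositive of $\left(\ref{IMS}\right)$ in both directions shows that indifference is preserved under mixing: $x\sim y$ implies $m(\alpha,x,z)\sim m(\alpha,y,z)$.

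With these lemmas I build $v$ by calibration. For any $z$ with $x^{*}\succsim z\succsim x_{*}$, the Archimedean axiom $\left(\ref{AMS}\right)$ together with strict monotonicity shows that $\{\alpha\in[0,1]:m(\alpha,x^{*},x_{*})\succsim z\}$ and its complement form a Dedekind cut, and the common boundary value $\alpha_{z}$ satisfies $m(\alpha_{z},x^{*},x_{*})\sim z$; strict monotonicity makes $\alpha_{z}$ unique, so I set $v(z)=\alpha_{z}$, and strict monotonicity immediately gives $z\succsim z'\Leftrightarrow v(z)\ge v(z')$ on the interval. To extend $v$ to an arbitrary $w\succ x^{*}$, I apply $\left(\ref{AMS}\right)$ to the triple $(w,x^{*},x_{*})$ and the cut argument to locate the unique $\lambda\in(0,1)$ with $m(\lambda,w,x_{*})\sim x^{*}$, then force affinity by imposing $v(x^{*})=\lambda v(w)+(1-\lambda)v(x_{*})$, i.e.\ $v(w)=1/\lambda>1$; the case $w\prec x_{*}$ is symmetric.

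Affinity of $v$ is the main obstacle and the heart of the argument. I would first verify $v(m(\alpha,x,y))=\alpha v(x)+(1-\alpha)v(y)$ for $x,y$ in the calibrating interval by using the indifference-preservation lemma to replace $x$ by the equivalent mixture $m(v(x),x^{*},x_{*})$ and $y$ by $m(v(y),x^{*},x_{*})$, and then collapsing the resulting compound mixture to a single mixture of $x^{*}$ and $x_{*}$ via the commutativity and associativity axioms, reading off its calibrated weight; the general case follows by the same substitution after the extension step, with the consistency on overlapping regions checked against the normalization at $x^{*}$ and $x_{*}$. Finally, uniqueness up to positive affine transformation is routine, since any affine representation is pinned down once its values at $x^{*}$ and $x_{*}$ are matched, every other element being tied by an indifference to a calibrating mixture. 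The delicate points throughout are the well-definedness of the Dedekind cut, which needs both halves of $\left(\ref{AMS}\right)$ plus the continuity in the weight supplied by strict monotonicity, and the bookkeeping that makes the affine extension globally consistent.
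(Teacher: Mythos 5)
The paper's ``proof'' is only a pointer to Theorem 8.4 of Fishburn (1971), and what you have written out is essentially the argument behind that cited result: the Herstein--Milnor calibration construction (betweenness and weight-monotonicity from the independence axiom, a Dedekind-cut definition of $v$ on the interval between two fixed reference elements, affine extension outside it, and affinity by substituting calibrated mixtures). So you are not taking a different route; you are reconstructing the omitted proof, and the overall architecture is sound. Your necessity direction is correct, and the derivations of $m(\beta,a,a)=a$, betweenness, and strict monotonicity in the mixing weight from \eqref{IMS} and the mixture-space identities all go through.

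There is, however, one step whose stated justification would fail: the indifference-preservation lemma. You claim that $x\sim y\Rightarrow m(\alpha,x,z)\sim m(\alpha,y,z)$ follows from ``the contrapositive of \eqref{IMS} in both directions.'' The contrapositive of \eqref{IMS} is: if not $m(\alpha,x,z)\succ m(\alpha,y,z)$ then not $x\succ y$; applying it in both directions yields $m(\alpha,x,z)\sim m(\alpha,y,z)\Rightarrow x\sim y$, which is the \emph{converse} of the implication you need. Nothing in \eqref{IMS} alone forbids $x\sim y$ together with $m(\alpha,x,z)\succ m(\alpha,y,z)$. The lemma is nonetheless true, but its standard proof requires the Archimedean axiom \eqref{AMS}: one assumes $m(\alpha,x,z)\succ m(\alpha,y,z)$ with $x\sim y$, splits into cases according to the position of $z$ relative to $x$, and uses \eqref{AMS} plus weight-monotonicity to manufacture a violation of \eqref{IMS} (this is precisely one of the preparatory lemmas in Fishburn's Chapter 8). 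Since your affinity argument leans on this lemma to replace $x$ and $y$ by their calibrated equivalents $m(v(x),x^{*},x_{*})$ and $m(v(y),x^{*},x_{*})$ inside a mixture, you should repair the justification; once that is done, the rest of your outline (the cut argument, the extension beyond $[x_{*},x^{*}]$, and uniqueness up to positive affine transformations) is the standard and correct completion.
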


\begin{lemma}
\label{FALC}If $O$ is convex then the set of constant acts with $m\left(
\alpha ,o,o^{\prime }\right)$ $=\alpha o+\left( 1-\alpha \right) o^{\prime
}\in O$ is a mixture space. Besides, if $\succsim _{S}$ on this mixture
space satisfies \ref{IMS} and \ref{AMS}, then $\geqq _{A}$ and $\geqq _{B}$
agree.for each $A,B\in \Sigma\backslash \left\{ \emptyset \right\} $.

\begin{proof}
A convex combination of two constant acts $o$ and $o^{\prime }$ is a
constant act $\alpha o+\left( 1-\alpha \right) o^{\prime }$ for some $\alpha
\in \left[ 0,1\right] $, so, the first part follows straightforwardly.

If $\succsim _{S}$ on this mixture space satisfies \ref{IMS} and \ref{AMS},
then, by theorem \ref{GTRMS}, there exist a unique, up to affine
transformations, real function $v:O\rightarrow 
\mathbb{R}
$ such that 
\begin{equation*}
\forall o,o^{\prime }\in O\left( o\succsim _{S}o^{\prime }\Leftrightarrow
v\left( o\right) \geq v\left( o^{\prime }\right) \right)
\end{equation*}%
and%
\begin{equation*}
\forall o,o^{\prime }\in O\forall \alpha \in \left[ 0,1\right] \left(
v\left( \alpha o+\left( 1-\alpha \right) o^{\prime }\right) =\alpha v\left(
o\right) +\left( 1-\alpha \right) v\left( o^{\prime }\right) \right) .
\end{equation*}

By lemma \ref{ACL} and theorem \ref{FTR}, for each $A\in \Sigma%
\backslash \left\{ \emptyset \right\} $, $u_{A}$ and $v$ are related by a
positive affine transformation. Thus, the second part follows.
\end{proof}
\end{lemma}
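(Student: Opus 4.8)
The plan is to dispatch the two assertions separately, the first by direct computation and the second as the substantive step. For the first assertion, identify each constant act with its outcome; convexity of $O$ makes $m(\alpha,o,o')=\alpha o+(1-\alpha)o'$ well defined into $O$, and one checks the three mixture-space identities in turn: $m(1,o,o')=o$; $m(\alpha,o,o')=(1-\alpha)o'+\alpha o=m(1-\alpha,o',o)$; and $m(\alpha,m(\beta,o,o'),o')=\alpha\beta o+(1-\alpha\beta)o'=m(\alpha\beta,o,o')$. This is routine algebra.

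For the second assertion I would first reduce agreement to a statement about Bernoulli indices. By Theorem \ref{FTR} each $\geqq_A$ is represented by the expectation of $u_A$, and this representation is unique up to a positive affine transformation; hence $\geqq_A$ and $\geqq_B$ agree if and only if $u_A$ and $u_B$ are positive affine transforms of one another. It therefore suffices to tie every $u_A$ to one common scale, the affine function $v$ produced by Theorem \ref{GTRMS}: since $\succsim_S$ on the mixture space of constant acts satisfies \ref{IMS} and \ref{AMS}, there is $v\colon O\to\mathbb{R}$ representing $\succsim_S$ with $v(\alpha o+(1-\alpha)o')=\alpha v(o)+(1-\alpha)v(o')$. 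By Lemma \ref{ACL} together with eventwise monotonicity $P3\frac{1}{2}$, the restriction of each $u_A$ to degenerate lotteries represents the same order $\succsim_S$ on $O$ as $v$ does, so $u_A$ and $v$ are at least ordinally equivalent.

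The real content is to upgrade this to an affine relation, for ordinal equivalence alone permits $u_A=\phi\circ v$ with $\phi$ any increasing reparametrization. I would try to show each $u_A$ is itself affine for $m$, equivalently that $\geqq_A$ reduces a lottery to its mean outcome: the simple lottery $\sum_k p_k 1_{o_k}$ should be $\geqq_A$-indifferent to the degenerate lottery at the mixed outcome $\sum_k p_k o_k\in O$. Using Lemma \ref{FLTA} (which rests on the convex-valuedness of $P_A$) to realize $\sum_k p_k 1_{o_k}$ as a finite-valued act $f$ with $P_A(f^{-1}(o_k))=p_k$, this collapses to the two-point calibration $f\sim_A \alpha o_1+(1-\alpha)o_2$ for $f=o_1 A_1 o_2$ with $P_A(A_1)=\alpha$. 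Granting the calibration, the expectation of $v$ represents $\geqq_A$ on all simple lotteries, so by the uniqueness in Theorem \ref{FTR} each $u_A$ is a positive affine transform of $v$; applying this to $A$ and to $B$ and composing gives that $u_A$ and $u_B$ are affinely related, i.e. $\geqq_A$ and $\geqq_B$ agree.

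The main obstacle is exactly this calibration of subjective ($P_A$) against objective ($m$) mixing. The qualitative postulates $P1\frac{1}{2}$ through $P6\frac{1}{2}$ mention nothing about the convex structure of $O$, so on their own they do not fix the weight $\rho$ in the Archimedean calibration $1_{\alpha o_1+(1-\alpha)o_2}\equiv_A \rho\, 1_{o_2}+(1-\rho)\,1_{o_1}$; a strictly concave $\phi$ would give $\rho\ne 1-\alpha$ and a genuinely different attitude toward risk. It is here that the mixture-space hypothesis must be spent: one has to propagate the affinity of $v$ through the independence and Archimedean properties of $\geqq_A$ to force $\rho=1-\alpha$, and I expect this propagation---rather than any of the bookkeeping above---to be where the proof really lives. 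That the conclusion can fail without it is exactly the phenomenon recorded in Theorem \ref{Theo002}, where distinct risk attitudes across classes arise precisely when $O$ is not a mixture space.
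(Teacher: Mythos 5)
Your overall architecture is the same as the paper's: verify the three mixture-space identities directly for the first claim, and for the second, obtain the mixture-affine index $v$ from Theorem \ref{GTRMS} and argue that every $u_A$ from Theorem \ref{FTR} is a positive affine transform of $v$, so that all the $\geqq _A$ coincide by the uniqueness clause of Theorem \ref{FTR}. The first claim and the reduction are fine. The genuine gap is the one you yourself flag and then leave open: you never establish the calibration $1_{\alpha o+\left( 1-\alpha \right) o^{\prime }}\equiv _{A}\alpha 1_{o}+\left( 1-\alpha \right) 1_{o^{\prime }}$, equivalently that $u_A$ is affine for $m$ rather than merely an increasing reparametrization $\phi _{A}\circ v$. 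Without that step the argument delivers only ordinal equivalence of $u_A$ and $u_B$ on $O$, which does not imply agreement of $\geqq _A$ and $\geqq _B$ on non-degenerate lotteries; your own remark about a strictly concave $\phi$ giving $\rho \neq 1-\alpha$ is precisely why. A plan that ends with ``I expect this propagation to be where the proof really lives'' is a correct diagnosis, not a proof.

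You should also know that the paper does not supply the missing argument either. Its proof of the second claim is one line: from Lemma \ref{ACL} (all $\geqq _A$ agree with $\succsim _{S}$ on constant lotteries) and Theorem \ref{FTR} it concludes that $u_A$ and $v$ are affinely related. That inference leans on the uniqueness clauses of Theorems \ref{FTR} and \ref{GTRMS}, but those clauses operate only within the class of representations that are already expectation-linear, respectively $m$-affine; order-preservation of $u_A$ on constants does not by itself place $u_A$ in the latter class. So the calibration you identify is exactly what is being assumed there, not derived. Closing it requires some axiom tying the convex structure of $O$ to preferences over non-constant acts --- for instance a dominance condition equating the replacement of an outcome by an $m$-mixture with the corresponding subjective mixture over a split of the event via Lemma \ref{FLTA} (the Anscombe--Aumann route) --- and no such condition appears among $P0\frac{1}{2}$--$P6\frac{1}{2}$ or in \ref{IMS}--\ref{AMS}, which constrain $\succsim _{S}$ on constant acts only. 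Your observation that the class-dependent risk attitudes permitted by Theorem \ref{Theo002} are the shadow of this missing link is exactly right.
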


\begin{corollary}
\label{CFALC}Under the conditions of lemma \ref{FALC}, there is a
real-valued function $v$ on $O$ such that, for each $A\in \Sigma%
\backslash \left\{ \emptyset \right\} $, 
\begin{equation*}
L\geqq _{A}L^{\prime }\Leftrightarrow \sum_{o:L\left( o\right) >0}v\left(
o\right) L\left( o\right) \geq \sum_{o:L^{\prime }\left( o\right) >0}v\left(
o\right) L^{\prime }\left( o\right)
\end{equation*}%
for all simple lotteries $L$ and $L^{\prime }$ on $O$. Besides, $v$ is
unique up to a positive affine transformation.

\begin{proof}
It is straightforward from lemma \ref{FALC}.
\end{proof}
\end{corollary}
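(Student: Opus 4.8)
The plan is to combine Lemma \ref{FALC} with the expected-utility representations of Theorem \ref{FTR}, exploiting the invariance of such a representation under positive affine transformations of the utility index. The function $v$ will be the single mixture-linear representation of $\succsim_S$ on the constant acts provided by Theorem \ref{GTRMS}; the substance of the corollary is that this one $v$ simultaneously yields an expected-utility representation of \emph{every} $\geqq_A$.

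First I would recall from the proof of Lemma \ref{FALC} that, under its hypotheses, Theorem \ref{GTRMS} produces a mixture-linear $v:O\rightarrow\mathbb{R}$ representing $\succsim_S$ on the mixture space of constant acts. Second, Theorem \ref{FTR} gives, for each $A\in\Sigma\backslash\{\emptyset\}$, an expected-utility representation of $\geqq_A$ by an index $u_A$, unique up to positive affine transformation. Third, I would invoke the relation already drawn inside the proof of Lemma \ref{FALC} from Lemma \ref{ACL} and Theorem \ref{FTR}: for every such $A$ one has $u_A = a_A v + b_A$ with constants $a_A>0$ and $b_A\in\mathbb{R}$.

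The remaining step is the elementary observation that expectation is affine. For any simple lottery $L$,
\[
\sum_{o:L(o)>0}u_A(o)L(o)=a_A\sum_{o:L(o)>0}v(o)L(o)+b_A,
\]
the constant $b_A$ surviving because $\sum_o L(o)=1$. Since $a_A>0$, the map $t\mapsto a_A t+b_A$ is strictly increasing, so for any two lotteries $L,L'$ the inequality between their $u_A$-expectations holds if and only if the same inequality holds between their $v$-expectations. Substituting this into the equivalence of Theorem \ref{FTR} replaces the ostensibly event-dependent $u_A$ by the single, $A$-independent $v$, yielding the asserted representation uniformly for all non-empty $A$.

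Uniqueness of $v$ up to positive affine transformation is inherited directly from Theorem \ref{GTRMS}, since $v$ is precisely the mixture-linear representation of $\succsim_S$ on the constant acts. I do not foresee a genuine obstacle: the entire content is that a single $v$ may be used in place of the a priori event-dependent indices $u_A$, and the equality $u_A=a_Av+b_A$ together with the affine-invariance of expectation is exactly what secures this.
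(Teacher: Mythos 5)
Your proposal is correct and follows essentially the same route as the paper: the paper's proof is just the phrase ``straightforward from Lemma \ref{FALC}'', and what you have written is precisely the unpacking of that lemma's internal argument --- taking the mixture-linear $v$ from Theorem \ref{GTRMS}, using the affine relation $u_A=a_Av+b_A$ established there via Lemma \ref{ACL} and Theorem \ref{FTR}, and observing that a positive affine change of the utility index leaves the expected-utility ranking of lotteries unchanged. No discrepancy to report.
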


It is shown above that, under the conditions of lemma \ref{FALC}, there
exists a unique, up to affine transformations, event-independent weak
preference for simple lotteries on $O$. I.e., an agent has a unique well
defined ordering on simple lotteries and discrepancies between different
classes are implied by different hypothetical conditional beliefs only. This
idea is formalized below.

\begin{definition}[$\geqq $]
Under the conditions of lemma \ref{FALC}, for each pair of simple lotteries $%
L$ and $L^{\prime }$ on $O$, 
\begin{equation*}
L\geqq L^{\prime }\Leftrightarrow L\geqq _{A}L^{\prime },
\end{equation*}%
for some $A\in \Sigma\backslash \left\{ \emptyset \right\} $.
\end{definition}

\begin{lemma}
Under the conditions of lemma \ref{FALC}, for each pair of simple acts $f$
and $g$, and each $A\in \Sigma\backslash \left\{ \emptyset \right\} $, 
\begin{equation*}
\begin{array}{c}
f\succsim _{A}g \\ 
\Leftrightarrow \\ 
L_{A}^{f}\geqq L_{A}^{g} \\ 
\Leftrightarrow \\ 
\sum_{o:L_{A}^{f}\left( o\right) >0}v\left( o\right) L_{A}^{f}\left(
o\right) \geq \sum_{o:L_{A}^{g}\left( o\right) >0}v\left( o\right)
L_{A}^{g}\left( o\right) \\ 
\Leftrightarrow \\ 
\int_{S}v\left( f\left( s\right) \right) dP_{A}\left( s\right) \geq
\int_{S}v\left( g\left( s\right) \right) dP_{A}\left( s\right)%
\end{array}%
.
\end{equation*}

\begin{proof}
By definition of $\geqq $ and corollary \ref{CFALC}.
\end{proof}
\end{lemma}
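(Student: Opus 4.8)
The plan is to read the four-term chain off the definitions together with the representation results already assembled, so the whole argument is a sequence of invocations rather than new constructions. Throughout I work under the hypotheses of Lemma~\ref{FALC}, which let me use two facts freely: that $\geqq_A$ and $\geqq_B$ agree for all nonempty $A,B$, and that the common Bernoulli index $v$ of Corollary~\ref{CFALC} represents each $\geqq_A$.

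First I would prove $f\succsim_A g \Leftrightarrow L_A^f \geqq L_A^g$. The forward direction is immediate: $f\succsim_A g$ gives $L_A^f \geqq_A L_A^g$ straight from the definition of $\geqq_A$, and hence $L_A^f \geqq L_A^g$ by the definition of $\geqq$ (take the witnessing event to be $A$ itself). For the converse I would unwind $\geqq$: $L_A^f \geqq L_A^g$ means $L_A^f \geqq_B L_A^g$ for some nonempty $B$, and the agreement of all $\geqq_A$ supplied by Lemma~\ref{FALC} upgrades this to $L_A^f \geqq_A L_A^g$. Expanding the definition of $\geqq_A$ then yields acts $h,k$ with $L_A^h=L_A^f$, $L_A^k=L_A^g$ and $h\succsim_A k$; Lemma~\ref{CBL} gives $f\sim_A h$ and $g\sim_A k$, so $f\succsim_A g$ follows by transitivity of $\succsim_A$.

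Next, the middle equivalence $L_A^f\geqq L_A^g \Leftrightarrow \sum_{o}v(o)L_A^f(o) \geq \sum_{o}v(o)L_A^g(o)$ is obtained by again localizing $\geqq$ to $\geqq_A$ through Lemma~\ref{FALC} and then reading off the representation of $\geqq_A$ furnished by Corollary~\ref{CFALC}. The last equivalence is the simple-function change-of-variables identity: since $L_A^f = P_A\circ f|_A^{-1}$, each coordinate satisfies $L_A^f(o)=P_A(f^{-1}(o)\cap A)$, and because $f$ has finite range and $P_A$ is supported on $A$ (extended to $\Sigma$ as $P_A(A\cap\,\cdot\,)$), the finite sum $\sum_{o}v(o)L_A^f(o)$ equals $\int_S v(f(s))\,dP_A(s)$ by definition of the integral of a simple function; the identical computation for $g$ makes the two inequalities coincide.

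I expect the only step needing genuine care to be the reverse direction of the first equivalence. There one must not simply apply the definition of $\geqq$ naively, since that definition quantifies existentially over events whereas the lemma concerns a fixed $A$; the resolution is to invoke Lemma~\ref{FALC} first to replace the existential witness by $A$, and only then to use Lemma~\ref{CBL} to transport the ranking of the witnessing acts $h,k$ back to $f$ and $g$. Every remaining step is bookkeeping with the definitions.
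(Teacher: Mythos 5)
Your argument is correct and follows the same route as the paper's one-line proof, which simply cites the definition of $\geqq$ and Corollary \ref{CFALC}; your additional detail (localizing the existential witness in $\geqq$ back to $A$ via Lemma \ref{FALC} and transporting through Lemma \ref{CBL}, then the simple-function change of variables) is exactly the intended unwinding. No gaps.
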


Anscombe and Aumann \cite{AnsAum1963} assumes properties similar to lemma \ref{FALC}, but in
this work there is not an explicit mixture space structure on non constant
acts (i.e. Savage's axioms are needed) and those properties on constant acts
are assumed because they imply the uniqueness mentioned above.

\end{document}